\newtheorem{theorem}{Theorem}[section]
\newtheorem{proposition}[theorem]{Proposition}
\newtheorem{lemma}[theorem]{Lemma}
\newtheorem{corollary}[theorem]{Corollary}
\theoremstyle{definition}
\newtheorem{definition}[theorem]{Definition}
\newtheorem{example}[theorem]{Example}
\theoremstyle{remark}
\newtheorem{remark}[theorem]{Remark}
\numberwithin{equation}{section}
\theoremstyle{remark}
\newtheorem{claim}[theorem]{Claim}
\newcommand{\R}{\mathbb{R}}
\newcommand{\C}{\mathbb{C}}
\newcommand{\Z}{\mathbb{Z}}
\newcommand{\module}{PR_\kappa}
\renewcommand\part{%
   \if@noskipsec \leavevmode \fi
   \par
   \addvspace{4ex}%
   \@afterindentfalse
   \secdef\@part\@spart}
\def\@part[#1]#2{%
    \ifnum \c@secnumdepth >\m@ne
      \refstepcounter{part}%
      \addcontentsline{toc}{part}{\thepart\hspace{1em}#1}%
    \else
      \addcontentsline{toc}{part}{#1}%
    \fi
    {\parindent \z@ \raggedright
     \interlinepenalty \@M
     \normalfont
     \ifnum \c@secnumdepth >\m@ne
       \Large\bfseries \partname\nobreakspace\thepart
       \par\nobreak
     \fi
     \huge \bfseries #2%
     \par}%
    \nobreak
    \vskip 3ex
    \@afterheading}
\def\@spart#1{%
    {\parindent \z@ \raggedright
     \interlinepenalty \@M
     \normalfont
     \huge \bfseries #1\par}%
     \nobreak
     \vskip 3ex
     \@afterheading}
\begin{document}

\title{A Floer-theoretic interpretation of the polynomial representation of the double affine Hecke algebra}
\author{Eilon Reisin-Tzur}
\address{University of California, Los Angeles, Los Angeles, CA 90095}
\email{ereisint@math.ucla.edu}
\thanks{This work partially supported by NSF grant DMS-2003483.}

\maketitle

\begin{abstract}
We construct an isomorphism between the wrapped higher-dimensional Heegaard Floer homology of $\kappa$-tuples of cotangent fibers and $\kappa$-tuples of conormal bundles of homotopically nontrivial simple closed curves in $T^*\Sigma$ with a certain braid skein group, where $\Sigma$ is a closed oriented surface of genus $> 0$ and $\kappa$ is a positive integer. Moreover, we show this produces a (right) module over the surface Hecke algebra associated to $\Sigma$. This module structure is shown to be equivalent to the polynomial representation of DAHA in the case where $\Sigma=T^2$ and the cotangent fibers and conormal bundles of curves are both parallel copies. 

\end{abstract}

\tableofcontents

\section{Introduction}


Higher-dimensional Heegaard Floer homology (HDHF) was developed by Colin, Honda, and Tian in \cite{colin2020applications} to analyze symplectic fillability questions in higher-dimensional contact topology. As its name suggests, it is very closely related to Heegaard Floer homology introduced by Ozsv\'{a}th and Szab\'{o} in \cite{ozsvath2004holomorphic} to study closed oriented 3-manifolds. HDHF models the Fukaya category of the Hilbert scheme of points on a Liouville domain and has been used to produce an invariant of links in $S^3$. In \cite{honda2022higher}, Honda, Tian, and Yuan constructed isomorphisms between the wrapped HDHF of cotangent fibers of cotangent bundles of closed oriented surfaces $\Sigma$ with positive genus to Hecke algebras ($H_\kappa(\Sigma)$) associated with the $\Sigma$. In particular, they showed that the wrapped HDHF of cotangent fibers of $T^*T^2$ is isomorphic to the double affine Hecke algebra (DAHA) $\ddot{H}_\kappa$ introduced by Cherednik in \cite{cherednik} for his proof of Macdonald's conjectures. Using results by Morton and Samuelson in \cite{morton2021dahas}, Honda, Tian, and Yuan were able to provide a symplectic geometry (Floer-theoretic) interpretation of DAHA and various other Hecke algebras. 

The goal of this paper is to build on \cite{honda2022higher} to provide a symplectic geometry interpretation of the polynomial representation of DAHA, closely related to Cherednik's basic representation. 

Let $\kappa$ be a positive integer and fix $\kappa$ distinct points $q_1, \dots, q_\kappa \in \Sigma$. Given the isomorphism between the wrapped HDHF $CW(\sqcup_{i=1}^\kappa T^*_{q_i}\Sigma)_c$ of cotangent fibers of $T^*\Sigma$  with an additional parameter $c$ and the surface Hecke algebra tensor product $H_\kappa(\Sigma) \otimes \Z[[\hbar]]$, there exists a functor from the HDHF Fukaya category (with parameter $c$) to the category of (right) $\hbar$-deformed $H_\kappa(\Sigma)$-modules which sends mutually disjoint Lagrangians $L_1,\dots, L_\kappa$ to Hom$(\sqcup_{i=1}^\kappa T^*_{q_i}\Sigma, \sqcup_{i=1}^\kappa L_i)$. This paper aims to shed more light on this functor by giving it a more explicit topological interpretation. 

Abbondandolo, Portaluri, and Schwarz proved in \cite{abbondandolo2008floer} that Floer homology with conormal boundary conditions is isomorphic to the singular homology of the natural path space associated to the boundary conditions. In the case of cotangent fibers, Abouzaid improved this to an $A_\infty$-equivalence on the chain level in \cite{abouzaid2012wrapped}.

We discuss the generalization of these results to HDHF. More precisely, we define the wrapped HDHF cochain complex between $\kappa$-tuples of mutually disjoint conormal bundles. Restricting to the manifold $T^*\Sigma$ and conormal bundles $\sqcup_{i=1}^\kappa T^*_{q_i}\Sigma, \sqcup_{i=1}^\kappa N^* \alpha_i$, where $\alpha_1,\dots, \alpha_\kappa$ is a mutually disjoint collection of homotopically nontrivial simple closed curves in $\Sigma$, we find that the wrapped HDHF is concentrated in degree zero. 

Our generalization of the path space in \cite{abbondandolo2008floer} is the path space of the unordered configuration space $\text{UConf}_\kappa(\Sigma)$ of $\kappa$ points on $\Sigma$ satisfying boundary conditions to ensure the paths go between our conormal bundles.  

Following \cite{abouzaid2012wrapped} and \cite{honda2022higher}, we define an evaluation map 
$$\mathcal{E}: CW(\sqcup_{i=1}^\kappa \phi^1_{H_V}(T^*_{q_i} \Sigma),\sqcup_{i=1}^\kappa N^* \alpha_i) \longrightarrow C_0(\Omega(\text{UConf}_\kappa(\Sigma),\bm{q}, \bm{\alpha})) \otimes \Z[[\hbar]],$$
where $\bm{q} = \{q_1,\dots,q_\kappa \} \in \text{UConf}_\kappa(\Sigma)$, $\bm{\alpha} = \alpha_1 \times \cdots \times \alpha_\kappa$, and $\Omega(\text{UConf}_\kappa(\Sigma),\bm{q}, \bm{\alpha})$ is the space of paths in $\text{UConf}_\kappa(\Sigma)$ starting at $\bm{q}$ and ending in $\bm{\alpha}$. Here we are viewing an element $(x_1,\dots,x_\kappa)\in \bm{\alpha}$ as an unordered tuple; this is possible since the $\alpha_1,\dots,\alpha_\kappa$ are mutually disjoint.  

Taking homotopy classes of paths in $C_0(\Omega(\text{UConf}_\kappa(\Sigma),\bm{q}, \bm{\alpha}))$ and quotienting by the HOMFLY skein relation produces a map 
$$\mathcal{F}:CW(\sqcup_{i=1}^\kappa T_{q_i}^*\Sigma,\sqcup_{i=1}^\kappa N^* \alpha_i) \longrightarrow BSk_\kappa(\Sigma,\bm{q}, \bm{\alpha}),$$
\noindent where $BSk_\kappa(\Sigma,\bm{q}, \bm{\alpha})$ is a variant of the braid skein algebra introduced by Morton and Samuelson in \cite{morton2021dahas}. Informally, $BSk_\kappa(\Sigma,\bm{q}, \bm{\alpha})$ consists of homotopy classes of braids which start at $\{q_1,\dots, q_\kappa\}$ and end in $\alpha_1 \times \cdots \times \alpha_\kappa$, modulo the HOMFLY skein relation. Morton and Samuelson also define the braid skein algebra on a punctured surface, leading to our variant $BSk_\kappa(\Sigma,\bm{q},\bm{\alpha}, \ast)$, defined in Section \ref{sec:parameterc}. The puncture $\ast \in \Sigma$ gives rise to a marked point relation which in turn is interpreted as a $c$-deformed homotopy relation on our braids. Adding this marked point into our formulation, we get a map 
$$\mathcal{F}:CW(\sqcup_{i=1}^\kappa T_{q_i}^*\Sigma,\sqcup_{i=1}^\kappa N^* \alpha_i)_c \longrightarrow BSk_\kappa(\Sigma,\bm{q},\bm{\alpha}, \ast).$$

The first main result of this paper is Theorem \ref{thm:Fiso1}, proved in Section \ref{sec:Fiso proof} :

\begin{theorem}
\label{thm:Fiso1}
    $\mathcal{F}$ is an isomorphism.
\end{theorem}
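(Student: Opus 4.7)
The plan is to adapt the Honda--Tian--Yuan strategy for the wrapped HDHF of cotangent fibers to the present mixed setting, supplemented by Abouzaid's techniques. Since the complex is, as stated above, concentrated in degree zero, it is enough to work at the level of generators and the first differential, bypassing higher $A_\infty$ operations.

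First, I would match generators. For a generic Riemannian metric on $\Sigma$ and wrapping Hamiltonian $H_V$, time-$1$ chords from $\phi^1_{H_V}(T^*_{q_i}\Sigma)$ to $N^*\alpha_j$ are in bijection with homotopy classes of arcs in $\Sigma$ from $q_i$ meeting $\alpha_j$ orthogonally; each such class has a unique geodesic representative because the $\alpha_j$ are homotopically nontrivial simple closed curves. A $\kappa$-tuple of such chords projects to a path in $\text{UConf}_\kappa(\Sigma)$ from $\bm{q}$ to $\bm{\alpha}$, and $\mathcal{E}$ records its homotopy class. Surjectivity of $\mathcal{F}$ onto $BSk_\kappa(\Sigma,\bm{q},\bm{\alpha},\ast)$ then follows because any braid path from $\bm{q}$ to $\bm{\alpha}$ is homotopic, after applying HOMFLY resolutions at its crossings, to a disjoint union of arcs of the above type, each of which lifts to a chord generator.

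Second, I would verify that $\mathcal{F}$ intertwines the Floer differential $\partial$ with the defining relations of $BSk_\kappa(\Sigma,\bm{q},\bm{\alpha},\ast)$. Via SFT-type neck stretching, or equivalently a Fukaya--Oh Morse flow-tree degeneration as used in \cite{honda2022higher}, rigid index-$1$ holomorphic disks in $T^*\Sigma$ with mixed boundary on fibers and conormals degenerate onto planar diagrams in $\Sigma$ of two types: local crossing disks between two strands that realize the HOMFLY resolution of the crossing, and disks sweeping over the puncture $\ast$ that realize the $c$-deformed marked-point relation. This simultaneously shows that $\mathcal{F}$ factors through the skein quotient and that two chord generators projecting to homotopic configuration paths differ by a Floer boundary, yielding injectivity.

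The principal obstacle will be the disk-count analysis in the second step: one must verify that the moduli spaces of rigid disks with a boundary arc on a conormal $N^*\alpha_j$ have the expected behavior at the corners where fibers meet conormals, enjoy the exponential decay estimates needed for the degeneration to planar diagrams, and produce no contributions beyond HOMFLY and the puncture relation. This is essentially the generalization of the Honda--Tian--Yuan analysis from pure fiber boundary conditions to the mixed fiber/conormal setting; once the disk count is pinned down, the isomorphism follows immediately from the bijection on generators and the identification of relations.
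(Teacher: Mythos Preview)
Your proposal has a genuine gap in the second step. Both $CW(\sqcup_i T_{q_i}^*\Sigma,\sqcup_i N^*\alpha_i)_c$ and $BSk_\kappa(\Sigma,\bm{q},\bm{\alpha},\ast)$ are concentrated in degree $0$, so the Floer differential $\partial=\mu^1$ is identically zero. There is nothing to ``intertwine,'' and your injectivity mechanism---``two chord generators projecting to homotopic configuration paths differ by a Floer boundary''---is vacuous. Likewise, the HOMFLY and marked-point relations are not produced by rigid index-$1$ disks contributing to a differential; they are already imposed on the target, while on the source side they are encoded in the $\hbar$- and $c$-weights of the curve count that \emph{defines} $\mathcal{F}$. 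The neck-stretching/flow-tree degeneration you propose is therefore aimed at the wrong target: it neither shows well-definedness (which is automatic) nor injectivity.

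The paper's route is different and avoids any direct analysis of the full $\hbar$-dependent curve count. One first proves that the specialization $\mathcal{F}_0:=\mathcal{F}|_{\hbar=0}$ is an isomorphism. At $\hbar=0$ only curves with $\chi=\kappa$ (i.e.\ $\kappa$ disjoint disks) contribute, so one reduces to $\kappa=1$; there the Abbondandolo--Schwarz map $\Theta$ is already known to be an isomorphism, and one shows $\tilde{\mathcal{F}}_0\circ\Theta=\mathrm{id}$ via a one-parameter family of half-strip moduli spaces $\mathcal{C}(y,z;a)$ interpolating between the identity ($a=0$) and the composition ($a\to\infty$). This is where the Abouzaid-style argument you allude to actually enters, and it is what certifies that $\mathcal{F}_0$ sends each chord generator to the \emph{expected} geodesic tuple---your first paragraph asserts this bijection but does not prove that $\mathcal{F}_0$ realizes it. Once $\mathcal{F}_0$ is an isomorphism, the general case follows from a purely formal $\hbar$-adic argument: injectivity uses that $BSk_\kappa$ has no $\hbar$-torsion, and surjectivity proceeds by successive correction in powers of $\hbar$.
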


With this in mind, we show that the action of $CW(\sqcup_{i=1}^\kappa T^*_{q_i} \Sigma)_c$ on $CW(\sqcup_{i=1}^\kappa T^*_{q_i}\Sigma,\sqcup_{i=1}^\kappa N^* \alpha_i)_c$ agrees with the action of the braid skein algebra $BSk_\kappa(\Sigma,\bm{q},\ast)$ on $BSk_\kappa(\Sigma,\bm{q},\bm{\alpha},\ast)$. 

\begin{lemma}
\label{lemma:presentation}
Let $\alpha_1, \dots, \alpha_\kappa$ be parallel copies of the meridian on $T^2$. Then
$$BSk_\kappa(T^2,\bm{q},\bm{\alpha}, \ast) \simeq (\Z[a_1^{\pm 1}, \dots, a_{\kappa}^{\pm 1}] \otimes \Z[S_{\kappa}])  \otimes \Z[c^{\pm 1}] \otimes \Z[[\hbar]].$$
\end{lemma}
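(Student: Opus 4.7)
The plan is to exhibit, for each pair $(\vec{n}, \sigma) \in \Z^\kappa \times S_\kappa$, an explicit \emph{simple braid} $b_{\vec{n},\sigma}$ and show that the set $\{b_{\vec{n},\sigma}\}$ is a $\Z[c^{\pm 1}] \otimes \Z[[\hbar]]$-basis for $BSk_\kappa(T^2,\bm{q},\bm{\alpha},\ast)$. First I would set $T^2 = \R^2/\Z^2$ with vertical meridians $\alpha_j = \{x = j/\kappa\}$, starting points $q_i = (i/\kappa - \varepsilon, 0)$, and the puncture $\ast$ chosen disjointly from all of these. Matsumoto's theorem gives a canonical positive lift $S_\kappa \hookrightarrow B_\kappa$, and I use this to define $b_{\vec n, \sigma}$ as the braid whose $i$-th strand runs from $q_i$ to a fixed basepoint on $\alpha_{\sigma(i)}$ via this positive lift, with $n_i$ additional longitudinal windings along $\alpha_{\sigma(i)}$ and no loops around $\ast$.

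For the spanning statement I would iteratively apply the HOMFLY skein relation $T - T^{-1} = \hbar \cdot (\text{resolved})$ to convert every negative crossing into a positive one plus an $\hbar$-weighted resolution term whose underlying permutation differs by a transposition from the original. The marked-point relation then absorbs each loop of a strand around $\ast$ into a factor of $c^{\pm 1}$, and planar isotopy together with sliding endpoints along the $\alpha_j$'s brings each remaining positive braid to one of the $b_{\vec n, \sigma}$. The need for completeness in $\hbar$ arises precisely because iterating these reductions can produce infinite power series.

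The main obstacle is linear independence. To handle it I would construct a well-defined $\Z[c^{\pm 1}] \otimes \Z[[\hbar]]$-linear map
\[ \Psi \colon BSk_\kappa(T^2,\bm{q},\bm{\alpha},\ast) \longrightarrow \Z[a_1^{\pm 1}, \dots, a_\kappa^{\pm 1}] \otimes \Z[S_\kappa] \otimes \Z[c^{\pm 1}] \otimes \Z[[\hbar]] \]
sending each $b_{\vec n, \sigma}$ to $a_1^{n_1} \cdots a_\kappa^{n_\kappa} \otimes \sigma$, inverse to the spanning map. At the level of braids (before quotienting), $\Psi$ would be assembled from three pieces of data: the underlying permutation $\sigma \in S_\kappa$, the longitudinal windings $\vec n$ (well-defined integers because the homology class of a strand in $H_1(T^2,\alpha_{\sigma(i)}) \cong \Z$ is invariant under relative homotopy and endpoint sliding), and the total puncture winding giving the exponent of $c$. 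The delicate step is compatibility with HOMFLY: when a positive crossing and the corresponding negative crossing of the same two strands are interchanged, their images under $\Psi$ must differ by $\hbar$ times the image of the resolved braid. This reduces to a local calculation on three-strand diagrams, analogous to proving the HOMFLY polynomial is well defined. An alternative, less hands-on route is to use the action of $H_\kappa(T^2) \simeq \ddot{H}_\kappa$ from Honda--Tian--Yuan on the module and identify $BSk_\kappa(T^2,\bm{q},\bm{\alpha},\ast)$ with an induced DAHA module whose rank over $\Z[a_1^{\pm 1}, \dots, a_\kappa^{\pm 1}]$ is known to be $\kappa!$, forcing the simple braids $b_{\vec n, \sigma}$ to be independent.
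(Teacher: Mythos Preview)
Your spanning argument is reasonable and close in spirit to the paper's, but your construction of $\Psi$ for linear independence has a genuine gap. You propose to define $\Psi$ on braids (before quotienting) purely from homological data: the underlying permutation $\sigma$, the longitudinal windings $\vec n$, and the total puncture winding giving the power of $c$. Such a map cannot descend through the HOMFLY relation. If $T$ and $T^{-1}$ denote the same braid with one crossing switched, they carry identical homological data, so $\Psi(T)-\Psi(T^{-1})=0$; yet HOMFLY forces this to equal $\hbar\cdot\Psi(R)$ with $R$ the resolved braid, and $\Psi(R)$ is a nonzero monomial. Thus your ``local calculation analogous to proving the HOMFLY polynomial is well defined'' cannot succeed for this $\Psi$: the HOMFLY polynomial is not a homological invariant, and neither can your $\Psi$ be. Your alternative route via induced DAHA modules is circular in this paper's logic, since identifying the module is part of what Theorem~\ref{thm:mainaction} establishes.

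The paper takes a different route. Rather than Matsumoto positive lifts, its canonical representatives are $\kappa$-tuples of $V$-perturbed geodesics on the flat $T^2$: each relative homotopy class of arcs from $q_i$ to $\alpha_{\sigma(i)}$ contains a unique such geodesic, so these tuples are indexed precisely by $(\vec n,\sigma)$, where $n_i$ is the signed intersection number with a fixed reference meridian. Spanning comes from the Morse gradient flow of the energy functional on the path space (Section~\ref{sec:geoaction}): any braid flows to a combination of geodesic tuples, bifurcating via the skein relation whenever a crossing changes sign and picking up $c^{\pm 2}$ when a strand crosses $\ast$. For independence the paper observes that distinct geodesic tuples lie in distinct homotopy classes of $\Omega(\text{UConf}_\kappa(T^2),\bm q,\bm\alpha)$; the cleanest way to turn this into a proof is the $\hbar$-filtration argument already used for Theorem~\ref{thm:Fiso1}: modulo $\hbar$ the skein relation degenerates to ``crossings may be switched freely,'' whence the quotient is visibly the free $\Z[c^{\pm 1}]$-module on tuples of individual arc classes together with a permutation, i.e.\ exactly $\Z[a_1^{\pm 1},\dots,a_\kappa^{\pm 1}]\otimes\Z[S_\kappa]\otimes\Z[c^{\pm 1}]$, and $\hbar$-adic completeness then lifts the isomorphism. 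In fact your homological $\Psi$ is exactly the right map \emph{modulo $\hbar$}; what your outline is missing is the recognition that this is the only level at which it is well defined, and that the full statement then follows by the bootstrap in $\hbar$.
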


For the configuration of points $q_i$ and curves $\alpha_i$ as in Lemma \ref{lemma:presentation}, we introduce an extra homological variable $d$ which keeps track of sliding the ends of the braids past each other on the $\alpha_i$; see Definition \ref{def:d-deformed}. 
The next main result is obtained after setting $d=s$ and relating $(\Z[a_1^{\pm 1}, \dots, a_{\kappa}^{\pm 1}] \otimes \Z[S_{\kappa}])  \otimes \Z[c^{\pm 1}] \otimes \Z[[\hbar]] \otimes \Z[d^{\pm 1}]$ to $\Z[[s]][c^{\pm 1}][X_1, \dots, X_\kappa]$ by averaging over the permutation components:

\begin{theorem}
\label{thm:mainaction}
    The action of $CW(\sqcup_{i=1}^\kappa T^*_{q_i} T^2)_c$ on $CW(\sqcup_{i=1}^\kappa T^*_{q_i}T^2,\sqcup_{i=1}^\kappa N^* \alpha_i)_{c,d}$ agrees with the polynomial representation of $\ddot{H}_\kappa$ on $\Z[[s]][c^{\pm 1}][X_1, \dots, X_\kappa]$ after setting $d=s$ and averaging the permutation components of 
    
    \noindent$CW(\sqcup_{i=1}^\kappa T^*_{q_i}T^2,\sqcup_{i=1}^\kappa N^* \alpha_i)_{c,d}$.
\end{theorem}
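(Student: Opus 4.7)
The plan is to reduce the theorem to an explicit algebraic comparison on generators, using the isomorphism $\mathcal{F}$ of Theorem \ref{thm:Fiso1} together with the presentation of Lemma \ref{lemma:presentation} to pass entirely from Floer theory to braid skein combinatorics, and then to match this with the standard formulas for the polynomial representation of $\ddot{H}_\kappa$.

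First I would invoke $\mathcal{F}$ applied to both factors: for the algebra, the version of Theorem \ref{thm:Fiso1} proved in \cite{honda2022higher} identifies $CW(\sqcup_{i=1}^\kappa T^*_{q_i}T^2)_c$ with the braid skein algebra $BSk_\kappa(T^2,\bm{q},\ast)$, which in turn is isomorphic to $\ddot{H}_\kappa$ by Morton--Samuelson; for the module, the version of $\mathcal{F}$ constructed in this paper identifies $CW(\sqcup_{i=1}^\kappa T^*_{q_i}T^2,\sqcup_{i=1}^\kappa N^*\alpha_i)_c$ with $BSk_\kappa(T^2,\bm{q},\bm{\alpha},\ast)$. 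Then I would upgrade both sides to the $d$-deformed setting of Definition \ref{def:d-deformed}, showing that $\mathcal{F}$ intertwines the two module structures: on the Floer side the action is the standard $A_\infty$-composition $CW \otimes CW \to CW$, while on the braid skein side it is concatenation of braids followed by HOMFLY reduction, with the sliding of endpoints along the $\alpha_i$ tracked by $d$. Intertwining follows from naturality of $\mathcal{F}$ under composition, which in the cotangent fiber-only case is exactly what is used in \cite{honda2022higher}.

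Next I would make the module concrete. By Lemma \ref{lemma:presentation}, after adjoining $d$ the module becomes $(\Z[a_1^{\pm 1},\dots,a_\kappa^{\pm 1}]\otimes\Z[S_\kappa])\otimes\Z[c^{\pm 1}]\otimes\Z[[\hbar]]\otimes\Z[d^{\pm 1}]$. The averaging procedure should symmetrize the $S_\kappa$-factor on the right; the quotient (or invariants, depending on conventions) can be identified, after the substitution $d=s$, with $\Z[[s]][c^{\pm 1}][X_1,\dots,X_\kappa]$ by sending $a_i \mapsto X_i$. Here I need to check carefully that averaging kills exactly the relations needed to obtain a free polynomial algebra, and that the remaining symmetric-group action on the target side reproduces the natural permutation action of $S_\kappa$ on $X_1,\dots,X_\kappa$.

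The heart of the proof, and the main obstacle, is verifying that the geometric action on generators matches the Demazure--Lusztig/Cherednik formulas for the polynomial representation. It suffices to check this for a chosen set of DAHA generators: the $X_i$ (parallel copies of meridians), the $Y_i$ (parallel copies of longitudes), and the braiding elements $T_i$. For $X_i$, concatenating a meridian with a strand landing on $\alpha_j$ is geometrically just multiplication by $a_i = X_i$; this is straightforward. For $T_i$, the two possible resolutions of the crossing between strands $i$ and $i+1$, combined with the HOMFLY skein relation, should reproduce the Demazure--Lusztig operator $T_i = t\, s_i + (t-t^{-1})(s_i-1)/(X_i/X_{i+1}-1)$, with $t$ determined by the skein parameters. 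The hardest verification is for $Y_i$: a longitude strand intersects the $\alpha_j$ transversely, so pushing it through produces shift operators in the variable $d$; after setting $d=s$ and averaging, these must become the $q$-difference operators $X_i \mapsto q X_i$ (with $q$ matching $s$) appearing in Cherednik's polynomial representation. The computation reduces to analyzing a single crossing of a longitude with a meridian in the punctured torus and tracking the $c$-deformed marked-point relation together with the $d$-slide, and I would organize it by doing the $\kappa=1$ case first and then extending by naturality of the braid action.
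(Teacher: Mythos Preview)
Your high-level strategy --- transfer to the braid skein side via $\mathcal{F}$ (this intertwining is the commutativity of diagram \eqref{eq:diagram}), use Lemma \ref{lemma:presentation}, define an averaging map, and verify on DAHA generators --- is exactly the paper's. But two of the three generator checks rest on incorrect geometry, and those are the substantive steps.

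For $y_1$: in the chosen configuration the $\alpha_i=\{c_i\}\times S^1$ are vertical and $y_1$ is the $(0,1)$-loop, so $y_1$ is \emph{parallel} to the $\alpha_j$, not transverse; there is nothing to ``push through.'' The action (Proposition \ref{prop:action}(3)) is obtained by sliding the $q_1$-strand's endpoint along its $\alpha$ and isotoping past copies of the marked point $\ast$, yielding $y_1\cdot(\bm a,\sigma)=c^{2n_1}\tau_\kappa^{-1}(\bm a_{\tau_\kappa},\tau_\kappa\sigma)$ with no $d$ involved. The parameter $d$ never records intersections with the $\alpha_j$; by Definition \ref{def:d-deformed} it counts signed crossings of strand \emph{endpoints} past one another along the $\alpha$-curves, and it enters only through the $\sigma_i$ action. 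For $\sigma_i$: the paper does not attempt to match the full Demazure--Lusztig operator on monomials. The key input is the endpoint-sliding Lemma \ref{geodesicisotoping}, which gives $\sigma_i\cdot(1,\sigma)=d^{-1}(1,\sigma_i\sigma)$ or $d(1,\sigma_i\sigma)+\hbar(1,\sigma)$ according to whether $\sigma(i)<\sigma(i+1)$. Summing over $S_\kappa$ with $d=s$ yields $\sigma_i\cdot S(1)=s\,S(1)=S(p(\sigma_i,1))$ (Corollary \ref{thm:averaging}); the general monomial case then follows formally from the DAHA relations $\sigma_i x_i=x_{i+1}\sigma_i-\hbar x_{i+1}$ together with the already-verified $x_i$ case, so the rational-function formula is never confronted directly. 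Finally, the averaging map is the explicit embedding $S\colon X_1^{n_1}\cdots X_\kappa^{n_\kappa}\mapsto\sum_{\sigma\in S_\kappa}(a_1^{n_1}\cdots a_\kappa^{n_\kappa},\sigma)$, not a quotient.
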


\noindent\textit{Organization}: In Section 2, we give a brief review of HDHF, wrapped HDHF, and conormal boundary conditions. In Section 3, we introduce the results of Abbondandolo and Schwarz before generalizing to the case $\kappa \geq 1$. We give a summary of \cite{honda2022higher} in Section 4, along with the proof of Theorem \ref{thm:Fiso1}. Section 5 discusses the action of $CW(\sqcup_{i=1}^\kappa T^*_{q_i}\Sigma)_c$ on $CW(\sqcup_{i=1}^\kappa T^*_{q_i}\Sigma, \sqcup_{i=1}^\kappa N^*\alpha_i)_c$ in the general case, before specializing to $T^2$ and the curves $\alpha_i$ in Section 6, where we prove Lemma \ref{lemma:presentation} and Theorem \ref{thm:mainaction}. \\

\textbf{Acknowledgements.} I would like to thank Ko Honda for nearly countless discussions and guidance through this project. I would also like to thank Tianyu Yuan for helpful discussions and suggestions and Peter Samuelson for discussions which motivated this study.

\section{Review of HDHF, wrapped HDHF, and conormal boundary conditions}

We give a brief summary of HDHF in Section 2.1 before defining the specific wrapped version of interest in Section 2.2.

\subsection{Review of HDHF} 
\hfill\\
\label{sec:HDHFreview}We refer the reader to \cite{colin2020applications} for more details regarding HDHF.

\begin{definition}
    Let $(X,\alpha)$ be a $2n$-dimensional completed Liouville domain and let $\omega = d\alpha$ be the exact symplectic form on $X$. The objects of the $A_\infty$-category $\mathcal{F}_\kappa (X)$ are $\kappa$-tuples of disjoint exact Lagrangians. The morphisms Hom$_{\mathcal{F}_\kappa (X)}(L_0,L_1) = CF(L_0,L_1)$ between two such objects $L_i = L_{i1} \sqcup \dots \sqcup L_{i\kappa}$, $i=0,1$, with mutually transverse components is the free abelian group generated by $\kappa$-tuples of intersections where each component is used exactly once. That is, the generators are $\bm{y} = \{ y_1, \dots, y_\kappa \}$ where $y_j \in L_{0j} \cap L_{1\sigma(j)}$ for some permutation $\sigma$ of $\{ 1, \dots , \kappa \}$. The coefficient ring is $\Z [[\hbar]]$ and the $A_\infty$-operations $\mu^m$ will be defined by (\ref{highermaps}).
\end{definition}

Similarly to the cylindrical reformulation of Heegaard Floer homology by Lipschitz \cite{lipshitz2006cylindrical}, we introduce an extra direction to keep track of points in the symmetric product of $X$. Let $D$ be the unit disk in $\C$ and $D_m = D - \{p_0, \dots , p_m \}$ be the disk with $m$ boundary punctures arranged counterclockwise. Let $\partial_i D_m$ be the boundary component from $p_i$ to $p_{i+1}$, with $\partial_m D_m$ going from $p_m$ to $p_0$.  We choose representatives of the moduli space of $D_m$ modulo automorphisms and label these representatives $D_m$, for lack of a better name. The {\em $A_\infty$-base direction} $D_m$ is shown in Figure \ref{fig:base}. 

\begin{figure}[h]
    \centering
    \includegraphics[width=0.25\textwidth]{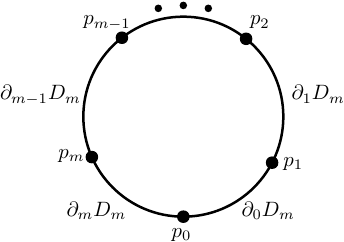}
    \caption{The $A_\infty$ base}
    \label{fig:base}
\end{figure}

Consider the manifold $\tilde{X}=(D_m \times X, \omega_m + \omega)$, where $\omega_m$ is an area form on $D_m$ which restricts to $ds_i \wedge dt_i$ on each strip-like end $e_i$ near $p_i$. We take $s_0 \to -\infty$ as we approach the {\em negative end} $p_0$ and $s_i \to + \infty$ for the other punctures refered to as the {\em positive ends}. Given $m+1$ objects $L_0, \dots, L_m$, we let $\tilde{L}_i = \partial_iD_m \times L_i$. We denote by $\pi_X : D_m \times X \to X$ the projection onto X and by $\pi_{D_m}: D_m \times X \to D_m$ the symplectic fibration of the base. 

There is a smooth assignment $D_m \mapsto J_{D_m}$ of almost complex structures $J_{D_m}$ that are close to a split almost complex structure $j_m \times J_X$ and which project holomorphically onto $D_m$. For more details, the reader is referred to \cite{colin2020applications} or \cite{honda2022higher}.

\begin{remark}
We call an assignment of almost complex structures $\textit{sufficiently generic}$ if all of the moduli spaces under consideration are transversely cut out. 
\end{remark}

Let $\mathcal{M}(\bm{y}_1,\dots,\bm{y}_m,\bm{y}_0)$ be the moduli space of maps 
$$u: (\dot{F},j)\longrightarrow (D_m \times X, J_{D_m}),$$ where $(F,j)$ is a compact Riemann surface with boundary, $\bm{p}_0$, $\dots$, $\bm{p}_m$ are disjoint $\kappa$-tuples of boundary punctures of $F$, and $\dot{F} = F \setminus \cup_i \bm{p}_i$, such that $u$ satisfies: 
\begin{equation}
    \begin{cases}
    $$du \circ j = J_{D_m} \circ du$$;\\
    $$\text{each component of } \partial \dot{F} \text{ is mapped to a unique }\tilde{L}_{ij}$$;\\
    $$\pi_X \circ u \text{ approaches }\bm{y}_i \text{ as } s_i \to +\infty \text{ for }i=1,\dots,m$$;\\
    $$\pi_x \circ u \text{ tends to }\bm{y}_0 \text{ as }s_0 \to -\infty$$;\\
    $$\pi_{D_m} \circ u \text{ is a }\kappa\text{-fold branched cover of }D_m.$$
    \end{cases}
\end{equation}\\
Letting the boundary punctures $\bm{p}_0$, $\dots$, $\bm{p}_m$ vary, the $A_\infty$ composition map $$\mu^m: CF(L_{m-1},L_m) \otimes \cdots \otimes CF(L_0,L_1) \longrightarrow CF(L_0,L_m)$$ is then defined as 
\begin{equation}
\label{highermaps}
    \mu^m(\bm{y}_1,\dots,\bm{y}_m)= \sum_{\bm{y}_0, \chi \leq \kappa} \# \mathcal{M}^{ind=0,\chi}(\bm{y}_1,\dots,\bm{y}_m,\bm{y}_0)\cdot \hbar^{\kappa - \chi}\cdot \bm{y}_0,\
\end{equation}
where $\chi$ is the Euler characteristic of $\dot{F}$ and $\#$ is the signed count of the moduli space. 

\begin{theorem}
\label{indextheorem}
    The Fredholm index of $\mathcal{M}^{\chi}(\bm{y}_1,\dots,\bm{y}_m,\bm{y}_0)$ with a varying complex structure on $D_m$ is $$ind(u) = (n-2) \chi + \mu + 2\kappa - m\kappa n + m - 2,$$
    where $\mu$ is the Maslov index of u. We refer the reader to \cite{colin2020applications} for details on a similar formula. 
\end{theorem}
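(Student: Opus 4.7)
My plan is to derive the index formula by splitting the linearized Cauchy--Riemann operator $D_u$ into its horizontal (base) and vertical (fiber) components and applying Riemann--Roch to each. Because $J_{D_m}$ projects holomorphically to $D_m$ and is close to the split almost complex structure $j_m \times J_X$, the projection $\pi_{D_m}: D_m \times X \to D_m$ is pseudo-holomorphic, so the short exact sequence of vector bundles
\begin{equation*}
0 \to u^* \ker d\pi_{D_m} \to u^* T(D_m \times X) \to u^* \pi_{D_m}^* TD_m \to 0
\end{equation*}
induces, up to compact perturbation, an additive splitting of the Fredholm operator, and hence $\mathrm{ind}(D_u) = \mathrm{ind}(D_u^{\text{vert}}) + \mathrm{ind}(D_u^{\text{horiz}})$.

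For the vertical term, Riemann--Roch for the Cauchy--Riemann operator on $u^*TX \to \dot F$ with totally real Lagrangian boundary values $TL_{ij}$ (real rank $n$) gives
\begin{equation*}
\mathrm{ind}(D_u^{\text{vert}}) = n\chi(\dot F) + \mu,
\end{equation*}
where $\mu$ is the vertical Maslov index assembled from the asymptotic data at the $(m+1)\kappa$ punctures of $\dot F$.

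For the horizontal term, the restriction $\pi_{D_m} \circ u : \dot F \to D_m$ is a $\kappa$-fold branched cover, and the horizontal deformations correspond to variations of this branched cover together with variations of the complex structure on $D_m$ itself. A Riemann--Hurwitz count of the number of simple branch points (expressed in terms of $\chi(\dot F), \chi(D_m)$, and $\kappa$), together with the $(m-2)$-dimensional moduli of $D_m$ and the Maslov contribution of the totally real arcs $\partial_i D_m$ at each of the $(m+1)\kappa$ punctures, yields the horizontal contribution $-2\chi(\dot F) + 2\kappa - m\kappa n + m - 2$. Summing the vertical and horizontal contributions gives
\begin{equation*}
\mathrm{ind}(u) = (n-2)\chi + \mu + 2\kappa - m\kappa n + m - 2.
\end{equation*}

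The principal obstacle is the careful bookkeeping of the Maslov contributions from the horizontal boundary arcs $\partial_i D_m$ at each puncture, together with fixing the Euler characteristic convention for $D_m$ with its strip-like ends so that the branched-cover dimension count lines up with the Riemann--Roch output. This is entirely analogous to the index computation carried out in \cite{colin2020applications}, and my task is essentially to verify that the degree $\kappa$ of the branched cover enters each term in the expected way and that the factor $(n-2)$ in front of $\chi$ arises precisely from the cancellation between the $n\chi(\dot F)$ of the vertical index and the $-2\chi(\dot F)$ of the horizontal index.
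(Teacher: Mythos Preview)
The paper gives no proof of this theorem; it simply states the formula and refers the reader to \cite{colin2020applications}. Your overall strategy---split the linearized operator along the fibration $\pi_{D_m}$, apply Riemann--Roch to the vertical piece and a Riemann--Hurwitz/branched-cover count to the horizontal piece, then add the $(m-2)$ moduli for $D_m$---is the standard approach, and presumably the one in \cite{colin2020applications}.

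There is, however, a bookkeeping error in your decomposition. You attribute the term $-m\kappa n$ to the \emph{horizontal} index. That cannot be right: the horizontal bundle is $(\pi_{D_m}\circ u)^*TD_m$, which has complex rank $1$, so its Riemann--Roch index is $\chi(\dot F)$ plus a Maslov contribution that does not see the fibre dimension $n$ at all. The factor $n$ in $-m\kappa n$ must instead come from the vertical side, specifically from the discrepancy between the boundary Maslov index that appears in Riemann--Roch for $u^*TX$ and the Maslov index $\mu$ as normalized in the theorem (via Conley--Zehnder indices or gradings of the generators $\bm y_i$). In other words, $\mu_{\mathrm{RR}}^{\mathrm{vert}} = \mu + (\text{shift involving }n\text{ and the }(m+1)\kappa\text{ punctures})$, and that shift is where $2\kappa - m\kappa n$ lives. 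Likewise, the horizontal Riemann--Roch contributes $\chi(\dot F) + \mu_{\mathrm{horiz}}$ with a rank-$1$ Maslov term, and combining with the vertical $n\chi$ and the $(m-2)$ for the moduli of $D_m$ is what finally yields $(n-2)\chi + \cdots + (m-2)$.

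So your sketch has the right architecture but the wrong attribution of terms: you have reverse-engineered the horizontal piece to make the sum come out, rather than deriving it. To fix this, write down explicitly the relation between $\mu$ and the Riemann--Roch Maslov index at each of the $(m+1)\kappa$ punctures, and compute the horizontal Maslov index of $T\partial D_m$ pulled back along the $\kappa$-fold cover separately; the $n$-dependent terms will then sit entirely on the vertical side where they belong.
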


\begin{remark}
    In the case where $2c_1(TX) = 0$ and the Maslov classes of the involved Lagrangians vanish, we have that $|\hbar| = 2-n$. 
\end{remark}

\subsection{Wrapped Floer theory of conormal bundles}
\hfill\\
We give a quick summary of wrapped Floer homology of conormal bundles; more details can be found in \cite{abbondandolo2008floer}.

Let $M$ be a closed manifold and $T^*M$ be its cotangent bundle. 
Denoting the elements of $T^*M$ as pairs $(q,p) \in M \times T_q^*M$, let $\omega= dp \wedge dq$ be the the standard symplectic form on $T^*M$. Furthermore, let $\eta$ be the Liouville vector field satisfying $\mathcal{L}_\eta \omega = \omega$. Given a time-dependent Hamiltonian $H: [0,1] \times T^*M \to \R$, let $X_{H}$ be the unique vector field such that $-dH(Y) = \omega(X_H,Y)$ for any vector field $Y$ on $T^*M$. We look for solutions $x:[0,1] \to T^*M$ of the non-local boundary value Hamiltonian equation 
\begin{equation}
\label{eq:Hameq}
    x'(t) = X_H(t,x(t)), \ x(0)\in L_0, \ x(1) \in L_1,
\end{equation}

\noindent where $L_0$ and $L_1$ are Lagrangian submanifolds of $T^*M$, not of $M$.

With this in mind, we consider smooth Hamiltonians $H$ on $[0,1] \times T^* M$ such that: 
\begin{enumerate}
   
    \item[\hypertarget{H0}{(H0)}] every solution $x$ of the non-local boundary value Hamiltonian problem is nondegenerate
   
    \item[(H1)] there exist  $h_0 > 0$ and $h_1\geq 0$ such that
$$DH(t,q,p)[\eta] - H(t,q,p) \geq h_0 |p|^2 - h_1,$$ for every $(t,q,p) \in [0,1] \times T^* M$ 
    \label{H2}
    \item[(H2)] there exists an $h_2 \geq 0$ such that 
    $$|\nabla_q H(t,q,p)| \leq h_2(1+|p|^2), \ |\nabla_p H(t,q,p) \leq h_2(1+|p|)|,$$ 
    for every $(t,q,p) \in [0,1] \times T^* M$, where $\nabla_q$ and $\nabla_p$ denote the horizontal and vertical components of the gradient, respectively.
\end{enumerate}
 Condition (H0) holds for a generic choice of $H$ in the space that we are considering. Conditions (H1) and (H2) ensure that $H$ grows quadratically on the fibers of $T^*M$ and is radially convex for large $|p|$. 



We reframe our non-local boundary value Hamiltonian problem \eqref{eq:Hameq} as is done in \cite{abbondandolo2008floer}. 

Let $Q$ be a submanifold of $M \times M$ which is either compact or has cylindrical ends, meaning it is the union of a compact submanifold and submanifolds of the form $K \times [0,\infty)$, where $K$ is a Legendrian. Let $N^*Q$ denote the \hypertarget{conorm}{\textit{conormal bundle}} of $Q$, i.e. the set of covectors $(q,p)$ in $T^*(M \times M)$ such that $p \in T^*_q(M \times M)$ vanishes identically on $T_q Q$. The conormal bundle $N^*Q$ is a Lagrangian submanifold of $T^*(M \times M)$ and so is a natural candidate for investigation. 

\begin{remark}
    Given two Lagrangians $L_0$ and $L_1$ in $T^* M$, it is not necessarily true that $L_0 \times L_1$ is the conormal bundle of some submanifold $Q \subset M\times M$. On the other hand, not every $N^*Q$ can be written as $L_0 \times L_1$ for Lagrangians $L_0,L_1 \in T^*M$. Therefore, the problem we are tackling neither contains nor is contained in the original formulation above. For the purposes of this paper, we will be interested in Lagrangians $L_i$ of $T^*M$ which are conormal bundles of submanifolds $Q_i$ in $M$. Under these conditions, this new formulation is equivalent to the first one since $N^*(Q_1 \times Q_2) = L_1 \times L_2$. Specifically, we will be interested in the case where $M$ is a closed oriented surface of positive genus and $Q = \{\text{pt}\}\times \{\text{pt}\}$ or $Q= \{\text{pt}\} \times \alpha$ for a homotopically nontrivial simple closed curve $\alpha$. 
\end{remark}

Let $H$ be a time-dependent Hamiltonian on $T^*M$ satisfying \hyperlink{H0}{(H0), (H1), and (H2)} and consider the set of solutions, $\mathcal{P}^Q(H)$, to the Hamiltonian equation with \textit{conormal boundary conditions}. 

That is, $\mathcal{P}^Q(H)$ is the set of $x:[0,1] \to T^*M$ satisfying $$x'(t) = X_H(t,x(t)),$$ subject to the boundary conditions $$(x(0),\mathcal{C}x(1)) \in N^* Q,$$ where $\mathcal{C}$ is the anti-symplectic involution $T^* M \to T^* M$, $(q,p) \mapsto (q,-p)$. Note that if $Q = Q_1 \times Q_2 \in M \times M$, then $\mathcal{P}^Q(H)$ is the set of trajectories from $Q_1$ to $Q_2$ along our Hamiltonian vector field $X_H$. 

Let $\theta$ be the Liouville one-form on $T^* M$ and consider the Hamiltonian action functional given by 
$$\mathbb{A}_H(x) := \int x^*(\theta - Hdt).$$

The first variation of $\mathbb{A}_H(x)$ on the space of free paths is 
$$d \mathbb{A}_H(x)[\xi] = \int_0^1 \omega(\xi,x'(t)-X_H(t,x))dt + \theta(x(1))[\xi(1)] - \theta(x(0))[\xi(0)],$$
where $\xi$ is a section of $x^*(TT^*M)$. Since $\theta$ vanishes on the conormal bundle of every submanifold of $M$, the extremal curves of $\mathbb{A}_H(x)$ are precisely the elements of $\mathcal{P}^Q(H)$. This is the reason we set this up with conormal bundles.

The conditions \hyperlink{H0}{(H0), (H1), and (H2)} that we imposed on our Hamiltonian $H$ imply that the set of solutions $x\in \mathcal{P}^Q(H)$ such that $\mathbb{A}_H(x)\leq A$ is finite. 

Given a smoothly time-dependent $\omega$-compatible almost complex structure $J$ on $T^*M$, we consider the Floer equation 
\begin{equation}
\label{Floereq}
\partial_s u + J(t,u) (\partial_t u - X_H(t,u)) = 0,
\end{equation}

\noindent where $u: \R \times [0,1] \to T^*M$. 

Let $x^-, x^+ \in \mathcal{P}^Q(H)$ and denote by $\mathcal{M}(x^-,x^+)$ the set of all solutions of the Floer equation \eqref{Floereq} with the non-local boundary condition such that 
$$\lim_{s \to \pm \infty} u(s,t) = x^{\pm} (t), \ \forall t \in [0,1].$$
Then one can show that we have an energy identity 
\begin{equation}
    E(u) := \mathbb{A}_H(x^-) - \mathbb{A}_H(x^+).
\end{equation}

Furthermore, $\mathcal{M}(x^-,x^+)$ is empty whenever $\mathbb{A}_H(x^-) \leq \mathbb{A}_H(x^+)$ and $x^- \neq x^+$ and it consists of only the element $u(s,t) = x(t)$ when $x^-=x^+=x$. 

By perturbing the almost complex structure $J$, we can give $\mathcal{M}(x^-,x^+)$ a smooth structure and its dimension is the difference of Maslov indices,
$$\text{dim} \mathcal{M}(x^-,x^+) = \mu^Q(x^-) - \mu^Q(x^+).$$

It follows that when $\mu^Q(x^-) - \mu^Q(x^+)=1$, we get an oriented one-dimensional manifold. Moreover, we have a free $\R$-action given by translation of the $s$ variable and so we arrive at a compact zero-dimensional manifold $\mathcal{M}(x^-,x^+)/ \R$. Let $\epsilon([u]) \in \{-1,1\}$ be $+1$ if the $\R$-action is orientation-preserving on the component of $\mathcal{M}(x^-,x^+)$ containing $u$, and $-1$ otherwise. Define 
$$n_F(x^-,x^+):= \sum_{[u] \in \mathcal{M}(x^-,x^+)/\R} \epsilon([u]),$$
and denote by $F_k^Q(H)$ the free Abelian group generated by the elements $x\in \mathcal{P}^Q(H)$ with Maslov index $k$. 
The boundary morphism $$\partial_k:F_k^Q(H) \longrightarrow F_{k-1}^Q(H)$$ 
is defined by 
$$\partial_kx^- := \sum_{x^+\in \mathcal{P}^Q(H), \mu^Q(x^+) = k-1} n_F(x^-,x^+)x^+.$$

Since the set of elements with an upper bound on the action is finite, the above sum is finite. It can be shown that $\partial_{k-1} \circ \partial_k = 0$, and so $\{F_*^Q(H),\partial_* \}$ is a complex of free Abelian groups, called the Floer complex of $(T^* M, Q, H, J)$. The Floer homology is then defined as usual from the complex. 

As usual, different choices of the Hamiltonian $H$ produce chain homotopy equivalent complexes (provided the Hamiltonians are close enough).

\subsection{Wrapped HDHF}
\hfill\\
We offer a different but equivalent formulation of the wrapped Floer homology defined in the previous subsection and extend it to wrapped HDHF. 

Let $(M,g)$ be a compact Riemmanian manifold of dimension $n$ with the induced norm $|\cdot|$ on $T^*M$. Choose a time-dependent Hamiltonian $H_V: [0,1] \times T^*M \to \R$:
$$H_V(t,q,p)=\frac{1}{2}|p|^2 + V(t,q),$$

\noindent where $q \in M, p\in T^*_q M$ and $V$ is a perturbation term with small $W^{1,2}$-norm. This Hamiltonian satisfies the conditions mentioned in the previous subsection. \\
Taking the standard symplectic form $\omega = dq \wedge dp$ on $T^*M$, let $X_{H_V}$ be the Hamiltonian vector field and $\phi^t_{H_V}$ be the time-$t$ flow of $X_{H_V}$.

Let $L_0, L_1$ be Lagrangian submanifolds of $T^*M$ with cylindrical ends. The time-1 flow, $\phi^1_{H_V}(L_0)$, is again a Lagrangian submanifold of $T^*M$ with cylindrical ends. We define the \textit{wrapped Floer chain complex} $CW(L_0,L_1)$ of $L_0$ and $L_1$ to be the Floer chain complex $CF(\phi^1_{H_V}(L_0),L_1)$ of $\phi^1_{H_V}(L_0)$ and $L_1$. 

\begin{proposition}
    Let $Q = Q_1 \times Q_2 \in M\times M$ be a product of submanifolds whose conormal bundles have cylindrical ends. Then given a Hamiltonian $H_V$ satisfying conditions as before, 
    $$CW(N^*Q_1,N^*Q_2) = F^Q(H_V).$$
\end{proposition}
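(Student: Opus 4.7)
The plan is to exhibit a direct identification at the level of generators and of Floer trajectories, using the Hamiltonian flow to transport between the wrapped and unwrapped pictures, so that the two chain complexes agree generator-for-generator and differential-for-differential.

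For generators: the complex $CW(N^*Q_1, N^*Q_2) = CF(\phi^1_{H_V}(N^*Q_1), N^*Q_2)$ is generated by intersection points $y \in \phi^1_{H_V}(N^*Q_1) \cap N^*Q_2$, while $F^Q(H_V)$ is generated by $x \in \mathcal{P}^Q(H_V)$, that is, Hamiltonian trajectories $x : [0,1] \to T^*M$ satisfying $(x(0), \mathcal{C} x(1)) \in N^*Q$. I would define the assignment $y \mapsto x(t) := \phi^{t-1}_{H_V}(y)$ and check that it is a bijection: by construction $x$ is a flow line of $X_{H_V}$ with $x(1) = y \in N^*Q_2$ and $x(0) = \phi^{-1}_{H_V}(y) \in N^*Q_1$. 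Since a conormal bundle $N^*Q_i \subset T^*M$ consists of covectors vanishing on a linear subspace, it is invariant under $\mathcal{C}:(q,p) \mapsto (q,-p)$; thus $\mathcal{C} x(1) \in N^*Q_2$ and, using $N^*(Q_1 \times Q_2) = N^*Q_1 \times N^*Q_2$, the conormal boundary condition $(x(0), \mathcal{C} x(1)) \in N^*Q$ is satisfied.

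For trajectories: to a plain $J$-holomorphic strip $u : \R \times [0,1] \to T^*M$ with $u(s,0) \in \phi^1_{H_V}(N^*Q_1)$ and $u(s,1) \in N^*Q_2$, I would associate $u_0(s,t) := \phi^{t-1}_{H_V}(u(s,t))$, which satisfies $u_0(s,0) \in N^*Q_1$ and $u_0(s,1) \in N^*Q_2$. Writing $\Psi_t := d\phi^{1-t}_{H_V}$ and using that the flow commutes with itself, i.e., $X_{H_V}(\phi^{1-t}_{H_V}(u_0)) = \Psi_t \, X_{H_V}(u_0)$, a direct computation shows that $\partial_s u + J \partial_t u = 0$ is equivalent to the Floer equation
\[
\partial_s u_0 + J'(t) \bigl( \partial_t u_0 - X_{H_V}(t, u_0) \bigr) = 0,
\]
where $J'(t) := \Psi_t^{-1} \circ J \circ \Psi_t$ is again $\omega$-compatible because $\phi^{1-t}_{H_V}$ is a symplectomorphism. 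Under the generator bijection, the asymptotics of $u$ at intersection points correspond to the asymptotics of $u_0$ at the associated Hamiltonian chords.

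The main thing to verify is that this change of variables preserves the analytical data underlying the two complexes: $\omega$-compatibility of $J'$, transversality of the moduli spaces, matching of Maslov indices, and coherence of orientations. None of these is deep; all follow from the symplectic nature of $\phi^t_{H_V}$ and from the fact that the transformation is a smooth diffeomorphism of the relevant moduli spaces. Once these points are checked, the two Floer complexes are literally identical, yielding $CW(N^*Q_1, N^*Q_2) = F^Q(H_V)$.
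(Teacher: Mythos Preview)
Your proposal is correct and follows the same approach as the paper: identify generators (intersection points with Hamiltonian chords via the time-$1$ flow) and identify differentials ($J$-holomorphic strips with solutions of the Floer equation via the change of variables $u \mapsto \phi^{t-1}_{H_V}\circ u$). The paper's own proof is literally two sentences asserting precisely this identification, so you have simply filled in the details it omits; one minor caution is that your justification ``the flow commutes with itself, i.e.\ $X_{H_V}(\phi^{1-t}_{H_V}(u_0)) = \Psi_t\, X_{H_V}(u_0)$'' is only literally valid for autonomous Hamiltonians, but the desired Floer equation for $u_0$ follows anyway from the direct computation $\partial_t\bigl(\phi^{t-1}_{H_V}(u)\bigr) = d\phi^{t-1}_{H_V}\cdot\partial_t u + X_{H_V}(t,u_0)$.
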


\begin{proof}
    In both cases, the generators are time-1 Hamiltonian chords from $N^*Q_1$ to $N^*Q_2$. Moreover, the differentials for both count the same 0-dimensional moduli space of pseudoholomorphic curves between generators.
\end{proof}

We now generalize the definition of wrapped Floer theory to wrapped HDHF. 

Consider disjoint $\kappa$-tuples of Lagrangians $L_i = \sqcup_{i=j}^\kappa L_{ij}$ for $i = 1, 2$. We can ensure that the Hamiltonian chords between all of the Lagrangians involved are non-degenerate by choosing $g$ and $V$ generically.
\begin{definition}
    The \textit{wrapped higher dimensional Heegaard Floer chain complex} is given by $$CW(L_1,L_2):= CF(\sqcup_{i=1}^\kappa \phi^1_{H_V}(L_{1i}),\sqcup_{i=1}^\kappa L_{2i}).$$

\end{definition}

In the case of $L_1 = L_2$, we let $CW(L_1) := CW(L_1,L_1) = CF(\sqcup_{i=1}^\kappa \phi^1_{H_V}(L_{1i}),\sqcup_{i=1}^\kappa L_{1i})$. The $A_\infty$-operation
$$\mu^m: CW(L_1) \otimes \cdots \otimes CW(L_1) \longrightarrow CW(L_1)$$
\noindent does not immediately follow from the $A_\infty$-operations in the non-wrapped HDHF. Writing the Lagrangians out carefully, we see that the map is actually 
$$\mu^m: CF(\sqcup_i \phi^1_{H_V}(L_{1i}),\sqcup_i L_{1i})  \otimes \cdots \otimes CF(\sqcup_i \phi^m_{H_V}(L_{1i}),\sqcup_i \phi^{m-1}_{H_V}(L_{1i}))\to CF(\sqcup_i \phi^m_{H_V}(L_{1i}),\sqcup_iL_{1i}),$$
where $i=1,\cdots,\kappa$. The subtlety is that while  $CF(\sqcup_{i=1}^\kappa \phi^l_{H_V}(L_{1i}),\sqcup_{i=1}^\kappa \phi^{l-1}(L_{1i}))$ is naturally isomorphic to $CF(\sqcup_{i=1}^\kappa \phi^1_{H_V}(L_{1i}),\sqcup_{i=1}^\kappa L_{1i})$ for all $l \in \Z$, it is not the case for the chain complex $CF(\sqcup_{i=1}^\kappa \phi^m_{H_V}(L_{1i}),\sqcup_{i=1}^\kappa L_{1i})$. Luckily, this is resolved by a rescaling argument outlined in \cite{honda2022higher}, following \cite{abouzaid10}.

Taking this one step further, given $\kappa$-tuples of Lagrangians $L_1$ and $L_2$ with cylindrical ends, we can give $CW(L_1, L_2)$ the structure of a (right) $A_\infty$-module over $CW(L_1)$. Using a similar rescaling argument for $d\geq 2$ we can define the $A_\infty$-maps
$$\mu^d: CW(L_1, L_2) \otimes CW(L_1) \otimes \cdots \otimes CW(L_1) \longrightarrow CW(L_1, L_2),$$
\noindent giving us a (right) $A_\infty$-module structure.

\subsection{Wrapped HDHF example}
\hfill\\
\label{modelcalc}We perform a model calculation with $\kappa = 2$. Identify $M=T^2$ with $S^1 \times S^1 = \R / \Z \times \R / \Z$. Fix points $q_1 = (\frac{1}{6},\frac{1}{6})$, $q_2 = (\frac{2}{6},\frac{2}{6})$ and curves $\alpha_1= \{\frac{4}{6} \} \times S^1$ and $\alpha_2 = \{\frac{5}{6}\} \times S^1$. Let $L_1 = \sqcup_{i=1}^2 T^* _{q_i} T^2$ and $L_2 = \sqcup_{i=1}^2 N^* \alpha_i$. Note that $N^* \alpha_i = \alpha_i \times (\R \times \{0\}) \subset T^* T^2$.

 The perturbation term $V(t,q)$ can be chosen arbitrarily small and we will disregard it for the sake of this model computation. Then taking $H_V(t,q,p)=\frac{1}{2}|p|^2$, we have that $X_H = -p_1 \partial_{q_1} - p_2 \partial_{q_2}$ and the flow is given by $\phi^t_{H}(q_1,q_2,p_1,p_2) = (q_1 - p_1 t, q_2 - p_2 t, p_1, p_2)$, where $(q_i,p_i)$ are viewed as coordinates on $T^* S^1$. Since the cotangent fibers are based at $(\frac{i}{6},\frac{i}{6})$, the time-1 flow is given by $\phi^1_H(\frac{i}{6},\frac{i}{6},p_1,p_2) = (\frac{i}{6} - p_1, \frac{i}{6} - p_2, p_1,p_2)$. 

We are interested in intersections of $\phi^1_H(T^*_{q_i}T^2)$ with $N^* \alpha_j$, so we want solutions to $(\frac{i}{6} - p_1, \frac{i}{6} - p_2, p_1,p_2)$ = $(\frac{1}{2}+\frac{j}{6},a,p,0)$ for some $a \in \R / \Z,\  p \in \R$. Then $p_2$ must be 0. (Note that with a perturbation term, this is no longer necessarily true since there will be some flow in the fiber direction, but $p_2$ would be very small.) Continuing with this example, we see that $\frac{i}{6} - p_1 =  \frac{1}{2}+\frac{j}{6}$, and so $p_1 = -\frac{1}{2} + \frac{i-j}{6}$. Given one such $p_1$, we see that adding or subtracting 1 from $p_1$ gives another intersection. Let $\pi: T^* T^2 \to T^2$, $(q,p) \mapsto q$, be the projection onto the zero section. Then adding (subtracting) 1 along the fiber direction $p_1$ simply wraps clockwise (counterclockwise) once more around the $S^1 \times \{\frac{i}{6}\}$ direction on the torus before intersecting $\alpha_j$. The wrapping, along with some Hamiltonian chords, is shown in Figure \ref{fig:generators} for $\kappa=1$.
\begin{figure}[h]
    \centering
    \includegraphics[width=0.75\textwidth]{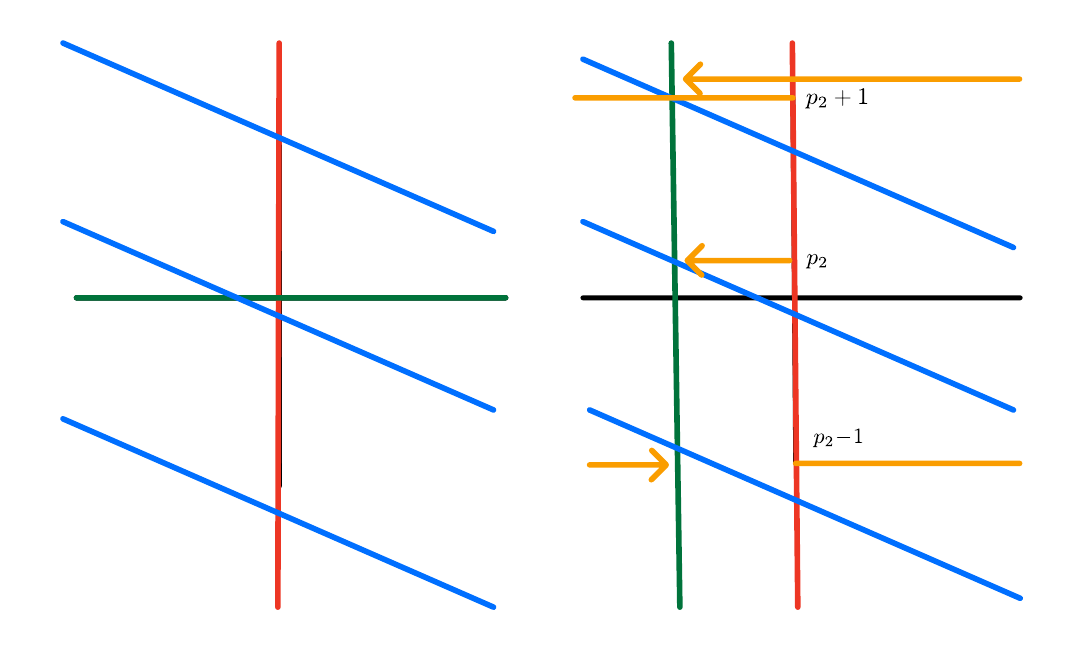}
    \caption{The case where $\kappa$=1. Here we view $T^*T^2$ as two copies of $T^* S^1$. The curve in red is $T^*_qT^2$, $\phi^1_{H_V}(T_q^*T^2)$ is in blue, and $N^* \alpha$ is in green. The arrows in orange represent Hamiltonian chords going from $T^*_qT^2$ to $N^* \alpha$. }
    \label{fig:generators}
\end{figure}

\begin{claim}
    The generators of $CF(\sqcup_{i=1}^2 \phi^1_{H_V}(T^*_{q_i} T^2),\sqcup_{i=1}^2 N^* \alpha_i)$ are elements of the form $(a_1^{n_1}a_2^{n_2},\sigma)$ \noindent where $\sigma \in S_2$ indicates intersections of $\phi^1_{H_V}(T^*_{q_i} T^2)$ and $N^* \alpha_{\sigma(i)}$ for $i=1,2$ and the exponent of $a_i$ indicates the number of times the intersection point, viewed as a Hamiltonian chord, wraps around the $(-1,0)$-direction when projected onto the zero section. 
\end{claim}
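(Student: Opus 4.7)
The plan is to unpack the definition of generators of $CF$ for HDHF and then directly apply the explicit flow calculation performed just before the claim. Recall that a generator of $CF(\sqcup_{i=1}^2 \phi^1_{H_V}(T^*_{q_i}T^2), \sqcup_{i=1}^2 N^*\alpha_i)$ is a $2$-tuple $\{y_1,y_2\}$ with $y_i \in \phi^1_{H_V}(T^*_{q_i}T^2) \cap N^*\alpha_{\sigma(i)}$ for some $\sigma \in S_2$, such that each $\phi^1_{H_V}(T^*_{q_i}T^2)$ and each $N^*\alpha_j$ is used exactly once. So the data of a generator decomposes naturally into a choice of $\sigma \in S_2$ together with, for each $i$, a choice of intersection point of $\phi^1_{H_V}(T^*_{q_i}T^2)$ with $N^*\alpha_{\sigma(i)}$.

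Next, I would use the explicit computation above to enumerate $\phi^1_{H}(T^*_{q_i}T^2) \cap N^*\alpha_j$ for each pair $(i,j)$. Taking $V \equiv 0$ (the general case is a small perturbation, which does not change the intersection count since these chords are transverse and isolated after perturbation), the calculation above shows that an intersection point satisfies $p_2 = 0$ and $p_1 = -\tfrac12 + \tfrac{i-j}{6} + n$ for some $n \in \Z$. So for each ordered pair $(i,j)$, the intersections are parameterized by $\Z$, and the integer $n$ records exactly how many extra times the associated Hamiltonian chord wraps around the $S^1 \times \{i/6\}$ circle in the $(-1,0)$-direction: increasing $p_1$ by $1$ pushes the endpoint once more clockwise before reaching $\alpha_j$, as observed above.

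Combining these two steps, a generator of $CF$ is determined by the permutation $\sigma \in S_2$ together with a pair of integers $(n_1,n_2)$, where $n_i$ is the wrapping number of the chord from $\phi^1_{H_V}(T^*_{q_i}T^2)$ to $N^*\alpha_{\sigma(i)}$. Encoding the tuple $(n_1,n_2)$ as the monomial $a_1^{n_1}a_2^{n_2}$ and the permutation as $\sigma$, this gives precisely the claimed description of generators as elements $(a_1^{n_1}a_2^{n_2},\sigma)$, and conversely each such symbol corresponds to a unique generator.

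There is essentially no hard step; the only mild subtlety is justifying that turning on a small perturbation $V$ does not change the generator count, which follows because the chords above are already nondegenerate (the intersections are transverse, as one checks by noting that $\phi^1_{H}(T^*_{q_i}T^2)$ and $N^*\alpha_j$ meet in the fiber direction $p_2 = 0$ transversally to $N^*\alpha_j$), so a small $W^{1,2}$-perturbation preserves transversality and merely shifts each chord slightly (now with $p_2$ small but nonzero). Consequently the set of generators is in canonical bijection with $\Z^2 \times S_2$, proving the claim.
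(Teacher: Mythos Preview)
Your proposal is correct and follows essentially the same approach as the paper: the claim is not given a separate proof there but is justified by the explicit flow computation immediately preceding it (and is a special case of the subsequent lemma for general $\kappa$), which is exactly what you do---unpack the HDHF generator definition into a permutation plus a pair of intersection points, enumerate the latter via the $p_1 = -\tfrac12 + \tfrac{i-j}{6} + n$ calculation, and argue that a small $V$ only perturbs each chord slightly. Your treatment of the perturbation step is in fact a bit more careful than the paper's, which simply asserts that $V$ can be chosen small enough that the intersections persist with $p_{i2}$ small.
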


More generally we have the following: 

For $n_i \in \Z$, let $(a_1^{n_1}\cdots a_\kappa^{n_\kappa},\sigma)$ denote intersections between $\phi^1_{H_V}(T^*_{q_i} T^2)$ and $N^* \alpha_{\sigma(i)}$ for $i = 1, \dots, \kappa$ whose corresponding Hamiltonian chords wrap $n_i$ times around the torus in the $(-1,0)$-direction when projected onto the zero section. 

\begin{lemma}
\label{lem:modelcomp}
    The generators of $CF(\sqcup_{i=1}^\kappa \phi^1_{H_V}(T^*_{q_i} T^2),\sqcup_{i=1}^\kappa N^* \alpha_i)$ for points $q_i$ and curves $\alpha_i$ as in the model case are given by \[\Z[(\textstyle \prod_{i=1}^\kappa a_i^{n_i},\sigma) \ | \ \sigma\in S_\kappa, \ n_i \in \Z ].\]  
\end{lemma}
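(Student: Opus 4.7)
The lemma is a direct generalization of the model computation done for $\kappa=2$ just above the statement, so the plan is to run the same calculation componentwise and then assemble by permutations.

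First, by definition of $CF$ for a disjoint union of Lagrangians, a generator of $CF(\sqcup_{i=1}^\kappa \phi^1_{H_V}(T^*_{q_i}T^2), \sqcup_{j=1}^\kappa N^*\alpha_j)$ is an unordered $\kappa$-tuple $\{y_1,\dots,y_\kappa\}$ with $y_i \in \phi^1_{H_V}(T^*_{q_i}T^2)\cap N^*\alpha_{\sigma(i)}$ for some permutation $\sigma\in S_\kappa$, where each $N^*\alpha_j$ is used exactly once. Hence it suffices to enumerate, for each pair $(i,j)$, the intersection $\phi^1_{H_V}(T^*_{q_i}T^2)\cap N^*\alpha_j$, and then sum over $\sigma\in S_\kappa$.

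Second, I would fix $i,j$ and repeat the $\kappa=2$ intersection computation verbatim. With the perturbation $V$ set to zero, $\phi^1_H(q_i,p_1,p_2)=(q_i^{(1)}-p_1,\,q_i^{(2)}-p_2,\,p_1,\,p_2)$; imposing that the base coordinate lie on $\alpha_j=\{c_j\}\times S^1$ forces $p_1 = q_i^{(1)} - c_j + n$ for some $n\in\Z$, while the conormal condition on $N^*\alpha_j$ forces $p_2=0$ and leaves $q_i^{(2)}-p_2\in S^1$ free. Thus the intersections are in bijection with $\Z$, and the integer $n$ is exactly the number of times the associated Hamiltonian chord wraps around the $(-1,0)$-direction when projected to $T^2$. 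I would then invoke condition \hyperlink{H0}{(H0)} to justify that for a sufficiently generic choice of $V$ all these intersections remain transverse (with $p_2$ only slightly perturbed away from $0$), so the count is unaffected.

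Third, combining both steps: a generator is specified by choosing a permutation $\sigma\in S_\kappa$ together with, for each $i$, a wrapping number $n_i\in\Z$ arising from the intersection in $\phi^1_{H_V}(T^*_{q_i}T^2)\cap N^*\alpha_{\sigma(i)}$. Labelling this datum $(\prod_{i=1}^\kappa a_i^{n_i},\sigma)$ as in the preceding claim gives exactly the asserted $\Z$-basis, so
\[
CF\bigl(\textstyle\sqcup_i\phi^1_{H_V}(T^*_{q_i}T^2),\sqcup_i N^*\alpha_i\bigr) \;=\; \Z\!\left[(\textstyle\prod_{i=1}^\kappa a_i^{n_i},\sigma)\ \middle|\ \sigma\in S_\kappa,\ n_i\in\Z\right].
\]
The only genuinely delicate point is ensuring nondegeneracy of the infinitely many intersection points after perturbation; this is handled by the standard generic-Hamiltonian argument invoked in \hyperlink{H0}{(H0)}, since on each wrapping sector the perturbation only nudges $p_2$ slightly away from $0$ without creating or destroying intersections. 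Everything else is a direct transcription of the $\kappa=2$ case.
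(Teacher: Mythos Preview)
Your proposal is correct and follows essentially the same approach as the paper: compute the intersection $\phi^1_{H}(T^*_{q_i}T^2)\cap N^*\alpha_j$ explicitly with $V=0$ to get a $\Z$-indexed family via the wrapping number, then argue that a small perturbation $V$ only nudges $p_2$ away from $0$ without changing the count, and finally assemble over permutations $\sigma\in S_\kappa$. The paper's proof is slightly terser but the content is identical; one cosmetic slip in your write-up is the phrase ``leaves $q_i^{(2)}-p_2\in S^1$ free'' --- once $p_2=0$ is forced this coordinate is determined, not free, but your conclusion that the intersections biject with $\Z$ is unaffected.
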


\begin{proof}
We choose coordinates $(q_{i1},q_{i2}, p_{i1}, p_{i2})$ on $T^*_{q_i}T^2$. 
Ignoring the perturbation term $V(t,q)$ for now, let $p_{i1} > 0$ be the smallest such number such that $\phi^1_{H_V}(q_{i1},q_{i2}, p_{i1}, p_{i2}) \cap N^* \alpha_j  \neq \emptyset$. Following the calculation done in our model example, we see that every such intersection occurs at $\phi^1(q_{i1},q_{i2}, p_{i1}+n, 0)$, for some $n\in \Z$. Then we can describe the set of intersections by the form in the lemma. 

Adding the perturbation term has little effect on the Hamiltonian chords, and it can be chosen small enough and such that $\frac{\partial V}{\partial q}$ is small. It follows that each intersection will occur at $\phi^1(q_{i1},q_{i2}, p_{i1}+n, p_{i2}(n))$ where $p_{i2}(n)$ will be small for all $n$. While $n$ may not be an integer, it will be very close to one. Alternatively, one could order all such $n$'s, giving a bijection with $\Z$. 
\end{proof}

\begin{remark}
    One way to visualize the wrapping is to view the image of the cotangent fibers projected onto $T^2$. If the $\alpha$ curves lie in only one component of $S^1 \times S^1$, the Hamiltonian chords look like geodesics from $q_i$ to $\alpha_j$ in the other $S^1$ component; see Figures \ref{fig:generators} and \ref{fig:geodesics}. This will be made more explicit in the next section. 
\end{remark}

\section{Path space and wrapped HDHF}
\label{sec:pathspaceandwrappedhdhf}

We restrict to the case $\kappa = 1$ and make use of an isomorphism of the wrapped Floer homology with the singular homology of a certain path space. We will review the Morse complex in this context in general before specializing to the surface $\Sigma$ and Lagrangian submanifolds of interest. For more details, the reader is encouraged to look at \cite[Sections 2, 3, and 4]{abbondandolo2010floer} . 

\subsection{Dual Lagrangian formulation and perturbed geodesics}

\label{sec:duallagrangian}We recall some facts about Legendre transforms and perturbed geodesics that will help establish a dual Lagrangian formulation from which to do Morse theory. 

Let $H \in C^\infty([0,1] \times T^*M)$ be a Hamiltonian satisfying the classical Tonelli assumptions. The Fenchel transform defines a smooth, time-dependent Lagrangian on $TM$,
$$L(t,q,v):= \max_{p\in T_q^*M}(\langle p, v \rangle) - H(t,q,p), \ (t,q,v)\in [0,1] \times TM.$$
We call this Lagrangian the \textit{Fenchel dual} of our Hamiltonian $H$. Similarly, given a Lagrangian satisfying equivalent assumptions, we can dualize to get the Hamiltonian 
$$H(t,q,p) = \max_{p\in T_q M}(\langle p, v \rangle) - L(t,q,v), \ (t,q,p)\in [0,1] \times T^*M.$$

Moreover, we have a diffeomorphism known as the Legendre transform
$$\mathcal{L}: [0,1] \times TM \longrightarrow [0,1] \times T^*M,\ (t,q,v) \mapsto (t,q,D_vL(t,q,v)),$$
such that $\mathcal{L}(t,q,v)=(t,q,p)$ if and only if $L(t,q,v) = \langle p, v \rangle - H(t,q,p)$. 

We now specialize to the Lagrangian function $L_V:[0,1] \times TM \to \R$ given by 
$$L_V(t,q,v) = \frac{1}{2}|v|^2 - V(t,q).$$
Its Fenchel dual Hamiltonian is then given by $H(t,q,p) = \frac{1}{2}|p|^2 + V(t,q)$, the same Hamiltonian we considered earlier when defining the Floer complex. 

With these functions in mind, there is an equivalence between Hamiltonian orbits $x: [0,1] \to T^*M$ solving $x'(t) = X_H(t,x(t))$ and curves $\gamma: [0,1] \to M$ which are extremals of the Lagrangian action functional $\mathcal{A}_V(\gamma) = \int_0^1 L_V(t,\gamma, \dot{\gamma}) dt$. More precisely, $x$ is a Hamiltonian orbit if and only if $\gamma := \pi_M \circ x$ is an absolutely continuous extremal of $\mathcal{A}_V$. 

We conclude this subsection with a further equivalence, recalling the definition of a perturbed geodesic. 

\begin{definition}
A {\em $V$-perturbed geodesic} $\gamma$ is a map $[0,1] \to M$ such that $$\nabla_{\dot{\gamma}}\dot{\gamma} = - \nabla V,$$
where $\nabla V$ denotes the gradient of $V$ with respect to the metric $g$. 
\end{definition}

The critical points of $\mathcal{A}_V$ are precisely the $V$-perturbed geodesics. Putting the two equivalences together, we have that the Hamiltonian orbits are in bijection with the $V$-perturbed geodesics. We can impose similar boundary conditions to the perturbed geodesics as we do the Hamiltonian chords in the case of our wrapped Floer theory, giving us a map $\mathcal{L}: \mathcal{P}^Q(H) \to \mathcal{P}^Q(L) $, where $\mathcal{P}^Q(H)$ are the Hamiltonian chords with the boundary condition and $\mathcal{P}^Q(L)$ are the $V$-perturbed geodesics satisfying the same boundary condition. Moreover, condition \hyperlink{H0}{(H0)} placed on the Hamiltonian $H$ will ensure that all critical points will be non-degenerate. 

\begin{definition}
We call $\mathcal{L}$ the Legendre transform. If we let $x \in \mathcal{P}^Q(H)$, then 
$$\mathcal{L}(x)(t):= \pi_M \circ x(t).\\$$  

\end{definition}

Figure \ref{fig:geodesics} below shows the perturbed geodesics corresponding to the generator $(a_{2},(12))$ of our model calculation in Section \ref{modelcalc}. 

\begin{figure}[h]
    \centering
    \includegraphics[width=0.75\textwidth]{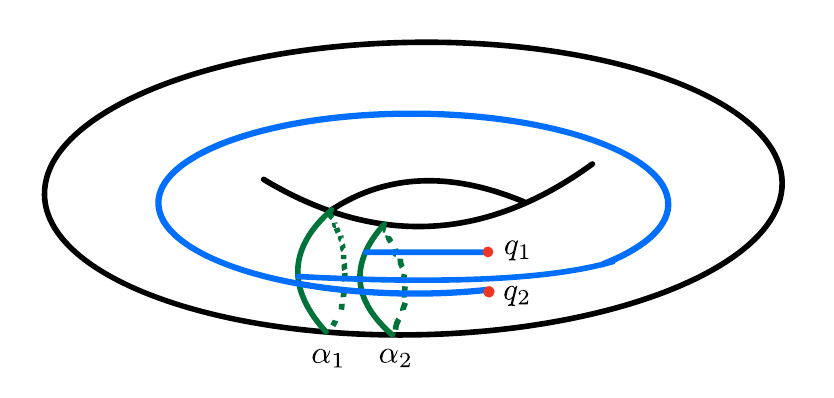}
    \caption{Perturbed geodesics (in blue) representing the generator $(a_{2},(12))$. }
    \label{fig:geodesics}
\end{figure}

\subsection{Path space and wrapped Floer homology}
\hfill\\
\label{sec:AS iso}Let $Q \subset M \times M$ be a closed submanifold as before. Consider the path space $$\Omega_Q(M) = \{ \gamma \in C^0([0,1],M) ~|~(\gamma(0),\gamma(1)) \in Q\}.$$ 
We restrict to a subset consisting of paths in the class $W^{1,2}$, and wish to do Morse theory on this new path space, denoted $\Omega_Q^{1,2}(M)$.

Given the same Hamiltonian $H_V$, Hamiltonian vector field $X_{H_V}$, and flow $\phi^t_{H_V}$ as in the previous section, we define the function $L:[0,1] \times TM \to \R$ satisfying second derivative conditions:

\begin{enumerate}
    \hypertarget{L1}{}
    \item[(L1)] There exists $l_1 > 0$ such that $\nabla_{vv}L(t,q,v)\geq l_1 I$

    \item[(L2)] There exists $l_2 > 0$ such that 
    $$|\nabla_{qq}L(t,q,v)| \leq l_2(1+|v|^2),\ |\nabla_{qv}L(t,q,v)|\leq l_2(1+|v|),\ |\nabla_{vv}L(t,q,v)| \leq l_2,$$
\end{enumerate}
 
 \noindent ensuring that $L$ grows quadratically in the tangent direction. These conditions are equivalent to conditions \hyperlink{H0}{(H1), (H2)} imposed on our Hamiltonian, and will guarantee that $H_V$, the Fenchel dual of $L_V$, satisfies those. We can take $$L_V(t,q,v) = \frac{1}{2}|v|^2 - V(t,q),$$
where $t\in [0,1], q\in M, \text{and }v \in T_q M$.

Considering the Morse function $$\mathcal{A}_V(\gamma) = \int_0^1 L_V(t,\gamma, \dot{\gamma}) dt ,$$ defined for $\gamma \in \Omega_Q^{1,2}(M)$, we define the Morse complex generated by its critical points. This is well defined and the Morse homology $HM_*(\mathcal{A}_V)$ is isomorphic to the singular homology of $\Omega_Q^{1,2}(M)$. Since the inclusion $\Omega_Q^{1,2}(M) \hookrightarrow \Omega_Q(M)$ is a homotopy equivalence, the isomorphism extends to the singular homology of $\Omega_Q(M)$.

\begin{theorem}[Abbondandolo-Schwarz]
\label{theorem 3.3}
There is an isomorphism between the wrapped Floer homology $HF_Q^*(T^*M)$ and the singular homology of the path space $\Omega_Q(M)$.
\end{theorem}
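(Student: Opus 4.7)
The plan is to follow the Abbondandolo--Schwarz strategy: identify the generators of the two complexes via Legendre duality, construct a comparison chain map between them using hybrid moduli spaces (half-Floer strip / half-Morse gradient line), and verify that this map is an isomorphism on chain complexes. The final identification of $HM_*(\mathcal{A}_V)$ with $H_*(\Omega_Q(M))$ is then a standard infinite-dimensional Morse theory result together with the homotopy equivalence $\Omega_Q^{1,2}(M) \hookrightarrow \Omega_Q(M)$.

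First, I would use the Legendre transform $\mathcal{L}$ of Section~\ref{sec:duallagrangian} to put the generators in bijection. By Tonelli duality and assumptions (H0)--(H2), the Hamiltonian chords in $\mathcal{P}^Q(H_V)$ correspond to $V$-perturbed geodesics with conormal boundary condition, and the nondegeneracy hypothesis \hyperlink{H0}{(H0)} translates into the statement that these geodesics are nondegenerate critical points of $\mathcal{A}_V$ on $\Omega_Q^{1,2}(M)$. A comparison of the Maslov index $\mu^Q(x)$ with the Morse index of $\mathcal{L}(x)$ (using the boundary conditions built into the conormal) gives $\mu^Q(x) = \mathrm{ind}_{\mathcal{A}_V}(\mathcal{L}(x))$, so the two complexes are at least graded in the same way.

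Next, I would define a comparison map $\Theta \colon CF^Q_*(H_V) \to CM_*(\mathcal{A}_V)$ (or its inverse, depending on convenience) by counting hybrid objects: pairs $(\gamma, u)$ where $u \colon (-\infty,0] \times [0,1] \to T^*M$ solves the Floer equation \eqref{Floereq} with conormal boundary conditions and converges as $s \to -\infty$ to a Hamiltonian chord $x$, while at $s = 0$ the projected curve $\pi_M \circ u(0,\cdot)$ lies on the unstable manifold of a $V$-perturbed geodesic $\gamma$ for the negative gradient flow of $\mathcal{A}_V$. Standard boundary analysis of the associated 1-dimensional moduli space (breaking either into a Floer strip and a hybrid, or into a hybrid and a Morse trajectory) yields $\partial_M \Theta - \Theta \partial_F = 0$, so $\Theta$ is a chain map.

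The main obstacle is analytic: establishing $L^\infty$ bounds and compactness for the hybrid moduli spaces. Because $T^*M$ is not compact and the Hamiltonian grows quadratically, one needs Abbondandolo--Schwarz's a priori estimates, which rely crucially on conditions \hyperlink{H0}{(H1), (H2)}, a convexity argument, and the fact that $\theta$ vanishes on every conormal bundle to preserve an energy identity through the conormal boundary conditions. These estimates guarantee the image of $u$ stays in a fixed compact subset of $T^*M$, so Gromov compactness applies. With compactness and generic transversality in hand, the map $\Theta$ is well defined and respects differentials.

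To prove $\Theta$ is an isomorphism, I would use an action-filtration argument: both $\mathcal{A}_V(\gamma)$ and the Hamiltonian action $\mathbb{A}_{H_V}(x)$ agree for Legendre-dual pairs, and the hybrid trajectories satisfy a monotonicity estimate $\mathcal{A}_V(\gamma) \leq \mathbb{A}_{H_V}(x)$ with equality only for the constant hybrid. Filtering both complexes by action and restricting to sublevel sets of any regular value, $\Theta$ becomes upper-triangular with identity diagonal entries with respect to the bijection of generators, so it is an isomorphism on each filtration level, hence on the whole complex. Finally, the standard result that $HM_*(\mathcal{A}_V) \cong H_*(\Omega_Q^{1,2}(M))$ combined with the homotopy equivalence $\Omega_Q^{1,2}(M) \simeq \Omega_Q(M)$ completes the identification $HF_Q^*(T^*M) \cong H_*(\Omega_Q(M))$.
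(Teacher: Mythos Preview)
The paper does not supply its own proof of this theorem; it is quoted as a result of Abbondandolo--Schwarz and only the chain map $\Theta$ is briefly reviewed later in Section~\ref{sec:proofofthm1.1} for use in the $\kappa=1$ argument. Your outline is exactly the Abbondandolo--Schwarz strategy the paper is invoking: Legendre duality on generators, a hybrid half-strip/half-gradient moduli space defining a comparison map, action filtration to see upper-triangularity with $\pm 1$ on the diagonal, and then the standard identification $HM_*(\mathcal{A}_V)\cong H_*(\Omega_Q(M))$.

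One small orientation remark. The hybrid moduli space you describe --- a semi-infinite Floer strip on $(-\infty,0]\times[0,1]$ converging to a chord $x$ at $-\infty$, with $\pi_M\circ u(0,\cdot)$ constrained to lie in the descending manifold of a critical point $\gamma$ --- together with the inequality $\mathcal{A}_V(\gamma)\le \mathbb{A}_{H_V}(x)$, naturally defines the map in the direction $CM_*(\mathcal{A}_V)\to CF^Q_*(H_V)$, not $CF\to CM$ as you first wrote. This is also the direction the paper records when it reviews $\Theta$ in Section~\ref{sec:proofofthm1.1}. With that orientation fixed (you hedged with ``or its inverse''), the upper-triangularity argument goes through exactly as you state.
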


\noindent\textbf{For the rest of the paper, we specialize $M$ to a closed oriented surface $\Sigma$ of genus $>0$. } 

Let $Q = \{q\} \times \alpha \in \Sigma \times \Sigma$, where $q\in \Sigma$ is a point and $\alpha$ is a homotopically nontrivial simple closed curve in $\Sigma$. 

\begin{lemma}
\label{lemma 3.4}
     $H_*(\Omega_Q(\Sigma))$ is supported in degree 0. 
\end{lemma}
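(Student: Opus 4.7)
The plan is to show that each connected component of $\Omega_Q(\Sigma)$ is weakly contractible, so $H_*(\Omega_Q(\Sigma))$ is the free abelian group on $\pi_0(\Omega_Q(\Sigma))$ sitting in degree zero. The key tool will be the long exact sequence of a suitable path-space fibration, combined with the asphericity of $\Sigma$.

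First I would invoke the path-space fibration $\Omega(\Sigma, q) \to P(\Sigma, q) \xrightarrow{\text{ev}_1} \Sigma$, whose total space is contractible, and pull it back along the inclusion $\alpha \hookrightarrow \Sigma$. This yields a Serre fibration
$$\Omega(\Sigma, q) \longrightarrow \Omega_Q(\Sigma) \longrightarrow \alpha,$$
since $\Omega_Q(\Sigma)$ is precisely the set of paths starting at $q$ and ending in $\alpha$. Because $\Sigma$ has positive genus, its universal cover is $\mathbb{R}^2$ or $\mathbb{H}^2$, hence contractible; therefore $\Sigma$ is aspherical and $\pi_n(\Omega(\Sigma,q)) = \pi_{n+1}(\Sigma) = 0$ for all $n \geq 1$.

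Next I would feed this into the long exact sequence of homotopy groups. For $n \geq 2$, the groups $\pi_n(\Omega(\Sigma,q))$ and $\pi_n(\alpha) \cong \pi_n(S^1)$ both vanish, giving $\pi_n(\Omega_Q(\Sigma)) = 0$. For $n = 1$, I would use the exactness of
$$0 = \pi_1(\Omega(\Sigma,q)) \longrightarrow \pi_1(\Omega_Q(\Sigma)) \longrightarrow \pi_1(\alpha) \cong \mathbb{Z} \xrightarrow{i_*} \pi_1(\Sigma),$$
where $i_*$ is induced by the inclusion $\alpha \hookrightarrow \Sigma$. The crucial observation is that $i_*$ is \emph{injective}: $\pi_1(\Sigma)$ is torsion-free (as the fundamental group of a closed aspherical surface), and the class $[\alpha]$ is nontrivial by hypothesis, hence of infinite order. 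Injectivity of $i_*$ then forces $\pi_1(\Omega_Q(\Sigma)) = 0$.

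Combining these vanishings, every component of $\Omega_Q(\Sigma)$ has trivial homotopy groups and is therefore weakly contractible. Whitehead's theorem (applied after passing to a CW replacement, which is legitimate for the path space of a manifold) then implies each component is homotopy equivalent to a point, so
$$H_k(\Omega_Q(\Sigma)) \;\cong\; \begin{cases}\mathbb{Z}[\pi_0(\Omega_Q(\Sigma))] & k = 0, \\ 0 & k \geq 1,\end{cases}$$
proving the lemma. I do not anticipate any serious obstacles; the only delicate point is the injectivity of $\pi_1(\alpha) \to \pi_1(\Sigma)$, which rests on the hypothesis that $\alpha$ is homotopically nontrivial together with torsion-freeness of the surface group.
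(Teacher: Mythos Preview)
Your argument is correct and takes a genuinely different route from the paper's. The paper's proof is Morse-theoretic: it equips $\Sigma$ with the flat (genus $1$) or hyperbolic (genus $>1$) metric, observes that for sufficiently small $V$ every $V$-perturbed geodesic from $q$ to $\alpha$ is an isolated local minimizer of the action functional $\mathcal{A}_V$, and concludes that all critical points have Morse index $0$; the homological statement then follows from the isomorphism $H_*(\Omega_Q(\Sigma))\cong HM_*(\mathcal{A}_V)$ in the Abbondandolo--Schwarz framework. Your approach instead computes the homotopy type of $\Omega_Q(\Sigma)$ directly via the pulled-back path fibration and the asphericity of $\Sigma$, bypassing any Riemannian input. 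The advantage of the paper's route is that it yields the stronger statement that \emph{every} critical point has Morse index $0$ as a by-product, which is precisely what is invoked in the subsequent corollary to conclude $|y_i|=0$; from your purely homotopical argument one would need a short additional step (e.g.\ the Morse inequalities on each contractible component, together with nondegeneracy) to recover that. Conversely, your argument is more robust in that it needs no metric choice or smallness assumption on $V$, and it makes transparent exactly where the hypothesis that $\alpha$ is homotopically nontrivial enters---namely, in the injectivity of $\pi_1(\alpha)\to\pi_1(\Sigma)$.
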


\begin{proof}
If $\Sigma$ is a torus, we can assume that $g$ is the flat metric, where all $V$-perturbed geodesics with sufficiently small perturbation $V$ are minimal and isolated. If the genus of $\Sigma$ is greater than 1, then we can assume that $g$ is the hyperbolic metric with constant curvature $-1$. It is well known that on a hyperbolic surface, there is a unique $V$-perturbed geodesic in each homotopy class of paths for $V$ sufficiently small. Hence the Morse indices of all critical points are 0. 
\end{proof}

Next let $q_1, \dots, q_\kappa$ be $\kappa$ distinct points on $\Sigma$ and $\alpha_1, \dots, \alpha_\kappa$ be $\kappa$ mutually disjoint homotopically nontrivial simple closed curves on $\Sigma$. We use $T^*_{\bm{q}} \Sigma$ to denote $\sqcup_{i=1}^\kappa T^*_{q_i}\Sigma$ and $N^* \bm{\alpha}$ to denote $\sqcup_{i=1}^\kappa N^* \alpha_i$. 

\begin{corollary}
$HF_Q^*(T^*\Sigma)$ is supported in degree 0. In particular, the grading $|\bm{y}|=0$ for every generator $\bm{y} \in CW(T^*_{\bm{q}}\Sigma,N^*\bm{\alpha})$.
\end{corollary}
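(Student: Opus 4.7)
The plan is to reduce the $\kappa$-tuple statement to the $\kappa=1$ case treated in Lemma \ref{lemma 3.4}. First I would unpack the generators: by the definition of the HDHF chain complex $CW(T^*_{\bm{q}}\Sigma, N^*\bm{\alpha}) = CF(\sqcup_{i=1}^\kappa \phi^1_{H_V}(T^*_{q_i}\Sigma), \sqcup_{i=1}^\kappa N^*\alpha_i)$, a generator $\bm{y}$ is an unordered $\kappa$-tuple $\{y_1,\dots,y_\kappa\}$ with $y_i \in \phi^1_{H_V}(T^*_{q_i}\Sigma) \cap N^*\alpha_{\sigma(i)}$ for some $\sigma \in S_\kappa$, using each cotangent fiber and each conormal exactly once.

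Next I would appeal to additivity of the Maslov index grading on such tuple generators: since the components live in disjoint pairs of Lagrangians in $T^*\Sigma$, the grading decomposes as
\[
|\bm{y}| = \sum_{i=1}^\kappa \mu(y_i),
\]
where $\mu(y_i)$ is the Maslov index of the individual Hamiltonian chord $y_i$ viewed as a generator of $CW(T^*_{q_i}\Sigma, N^*\alpha_{\sigma(i)})$. Then for each fixed $i$, I would apply Lemma \ref{lemma 3.4} with $Q = \{q_i\} \times \alpha_{\sigma(i)}$: combined with the Abbondandolo--Schwarz isomorphism of Theorem \ref{theorem 3.3}, it shows that every generator of $CW(T^*_{q_i}\Sigma, N^*\alpha_{\sigma(i)})$ has Maslov index $0$. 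Summing over $i$ yields $|\bm{y}| = 0$ for every generator, so both the chain complex and its homology $HF_Q^*(T^*\Sigma)$ are concentrated in degree $0$.

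The main point to verify is that a single perturbation $V$ can be chosen which simultaneously realizes the conclusion of Lemma \ref{lemma 3.4} for all $\kappa^2$ pairs $(q_i, \alpha_j)$ that can appear as components of a generator. Since there are only finitely many such pairs, the smallness condition on $V$ required in the proof of Lemma \ref{lemma 3.4} (to ensure minimality/uniqueness of $V$-perturbed geodesics in each homotopy class, under either the flat metric on $T^2$ or the hyperbolic metric on higher genus surfaces) can be imposed uniformly, and this remains compatible with the generic condition (H0) ensuring non-degeneracy of all Hamiltonian chords involved. I expect this compatibility check to be the only mild obstacle; otherwise the corollary is a direct assembly of Lemma \ref{lemma 3.4}, Theorem \ref{theorem 3.3}, and additivity of the Maslov grading on tuple generators.
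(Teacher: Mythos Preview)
Your proposal is correct and follows essentially the same approach as the paper: decompose $\bm{y}$ into its components $y_i$, use the additivity $|\bm{y}|=\sum_i |y_i|$, and invoke Lemma~\ref{lemma 3.4} for each pair $(q_i,\alpha_{\sigma(i)})$ to conclude $|y_i|=0$. The only minor sharpening is that the paper makes explicit the index identity underlying Theorem~\ref{theorem 3.3} (Conley--Zehnder index of the chord $=$ Morse index of its Legendre-dual geodesic), which is what actually forces each \emph{generator}, not just the homology, to sit in degree $0$; your appeal to the Abbondandolo--Schwarz isomorphism implicitly uses this chain-level fact.
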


\begin{proof}
Each $y_i \in \phi^1_{H_V}(T_{q_i}^* \Sigma) \cap N^* \alpha_j$ corresponds to a time-1 Hamiltonian chord from $T_{q_i}^* \Sigma$ to $N^* \alpha_j$. Its Legendre transform gives a $V$-perturbed geodesic $\gamma$ on $\Sigma$. The Conley-Zehnder index of $y_i$ is equal to the Morse index of $\gamma$ with respect to its Lagrangian action. Lemma \ref{lemma 3.4} above implies that $|y_i|=0$ for all $i$, and so $|\bm{y}|=0$. 
\end{proof}

\subsection{Unordered configuration space and wrapped HDHF}
\hfill\\
\label{sec:initialT1}Let $$\text{UConf}_\kappa(\Sigma) = \{ \{q_1,\dots,q_\kappa \}~|~ q_i \in \Sigma, q_i \neq q_j \ \text{for}\  i \neq j \}$$ be the configuration space of $\kappa$ unordered (distinct) points on $\Sigma$. 

We wish to generalize Theorem \ref{theorem 3.3} to the case where $\kappa > 1$. Let $\bm{q} = \{q_1,\dots,q_\kappa \} \in \text{UConf}_\kappa(\Sigma)$ and $\bm{\alpha} = \alpha_1 \times \cdots \times \alpha_\kappa$, where we view $\bm{\alpha}$ as a subset of $\text{UConf}_\kappa(\Sigma)$ by viewing $(x_1, \dots, x_\kappa)$ as an unordered tuple. The natural analog to consider on the path space side is the path space on $\text{UConf}_\kappa(\Sigma)$, which we denote by $$\Omega(\text{UConf}_\kappa(\Sigma),\bm{q}, \bm{\alpha}) = \{\gamma \in C^0([0,1],\text{UConf}_\kappa(\Sigma))~|~\gamma(0) = \bm{q}, \ \gamma(1) \in \bm{\alpha}\}.$$

We can identify each generator of $CF(\sqcup_{i=1}^\kappa \phi^1_{H_V}(T^*_{q_i} \Sigma),\sqcup_{i=1}^\kappa N^* \alpha_i)$ with a $\kappa$-tuple of $V$-perturbed geodesics from the $q_i$ to the $\alpha_{\sigma(i)}$ using the duality established in Section \ref{sec:AS iso}. This is an element of  $\Omega(\text{UConf}_\kappa(\Sigma),\bm{q}, \bm{\alpha})$.

To make this more precise, we construct an evaluation map 
$$\mathcal{E}: CF(\sqcup_{i=1}^\kappa \phi^1_{H_V}(T^*_{q_i} \Sigma),\sqcup_{i=1}^\kappa N^* \alpha_i) \longrightarrow C_0(\Omega(\text{UConf}_\kappa(\Sigma),\bm{q}, \bm{\alpha})) \otimes \Z[[\hbar]],$$
\noindent where $C_0(\Omega(\text{UConf}_\kappa(\Sigma),\bm{q}, \bm{\alpha}))$ is the space of 0-chains of the path space $\Omega(\text{UConf}_\kappa(\Sigma),\bm{q}, \bm{\alpha})$. 

The map $\mathcal{E}$ counts pseudo-holomorphic curves between the conormal Lagrangians and the zero section. Parametrizing the boundary of the curves along the zero section produces a path in the unordered configuration space. We keep the parameter $\hbar$ around to track the Euler characteristic of the map, which will later relate to the HOMFLY skein relation on braids.

Let $T_1 := D_2$ be our $A_\infty$-base where $\partial_iT_1 = \partial_i D_2$; shown in Figure \ref{fig:T1}. Let $\mathcal{T}_1$ be the moduli space of $T_1$ modulo automorphisms, and choose representatives $T_1$ of equivalence classes in a smooth manner. Let $\pi_{T^*\Sigma}$ be the projection $T_1 \times T^*\Sigma \to T^*\Sigma$ and choose a sufficiently generic consistent collection of compatible almost complex structures such that they are close to a split almost complex structure projecting holomorphically to $T_1$, as in Section \ref{sec:HDHFreview}. 
We denote by $\mathcal{H}(\bm{q}',\bm{y},\ \bm{x})$ the moduli space of maps $$u: (\dot{F}, j) \longrightarrow (T_1 \times T^*\Sigma, J_{T_1}),$$ where $(F, j)$ is a compact Riemann surface with boundary,  $\bm{p}_0$, $\bm{p}_1$, $\bm{p}_2$ are disjoint tuples of boundary punctures of $F$ and $\dot{F} = F \setminus \cup_i$\textbf{$p_i$}, satisfying: 

\begin{equation}
    \begin{cases}
    du \circ j = J_{T_1} \circ du;\\
    \pi_{T^*\Sigma} \circ u(z) \in \phi^1_{H_V}(\sqcup_{i=1}^\kappa T^*_{q_i}\Sigma) \text{ if } \pi_{T_1} \circ u(z) \subset \partial_0 T_1;\\
    
    \text{each component of } \partial \dot{F}\text{ that projects to }\partial_0T_1 \text{ maps to a distinct }\phi^1_{H_V}(T^*_{q_i}\Sigma) ;\\
    
    \pi_{T^*\Sigma} \circ u(z) \in \sqcup_{i=1}^\kappa N^* \alpha_i \text{ if } \pi_{T_1} \circ u(z) \subset \partial_1 T_1;\\
     
    \text{each component of } \partial \dot{F}\text{ that projects to }\partial_1T_1 \text{ maps to a distinct }
     N^*\alpha_i;\\

    \pi_{T^*\Sigma} \circ u(z) \in \Sigma \text{ if } \pi_{T_1} \circ u(z) \subset \partial_2 T_1;\\
     
    \pi_{T^*\Sigma} \circ u \text{ tends to } \bm{q}',\bm{y}, \ \bm{x} \text{ as } s_0,s_1 ,s_2 \to +\infty ;\\
   
    \pi_{T_1} \circ u \text{ is a }\kappa\text{-fold branched cover of a fixed }T_1 \in \mathcal{T}_1.\\
    
    \end{cases}
\end{equation}

\begin{figure}[h]
    \centering
    \includegraphics[width=0.4\textwidth]{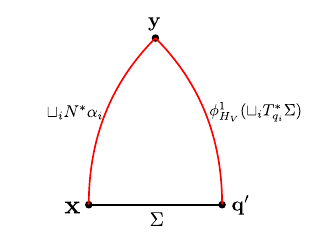}
    \caption{The $A_\infty$-base $T_1$}
    \label{fig:T1}
\end{figure}

In simpler terms, we look at the moduli space of holomorphic curves between the Lagrangians involved and the zero section of $T^* \Sigma$ in the framework of HDHF with only positive punctures.

\begin{remark}
While the intersections $\bm{y}$ and $\bm{q}'$ are discrete, we have an $S^1$-worth of choices for each $x_i \in \alpha_i \cap \Sigma$. To resolve this issue, we can either use the Morse-Bott (clean intersection) formalism or perturb the zero section near the $N^* \alpha_i$. This results in two intersection points $x_1,x_2 \in N^*\alpha_i \cap (\Sigma \times \{0\})$. If we are interested in index 0 curves in our moduli space then only one of these intersections will have the right grading. This is shown in Figure \ref{fig:bottomgenerator} for our model $\Sigma=T^2,\kappa=1$ case by counting pseudo-holomorphic triangles bounded by the conormal Lagrangians and the perturbed zero section. 

\end{remark}

\begin{figure}[h]
    \centering
    \includegraphics[width=0.5\textwidth]{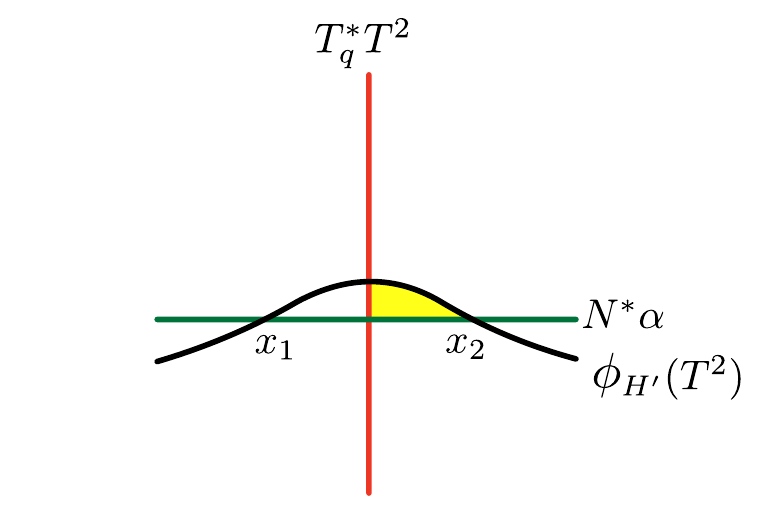}
    \caption{Perturbed zero section in one of the $T^*S^1$ directions. Only $x_2 \in N^* \alpha \cap \phi_{H'}(T^2)$ has the right Maslov index.}
    \label{fig:bottomgenerator}
\end{figure}

Let $\mathcal{H}^\chi(\bm{q}',\bm{y},\bm{x})$ be the subset of $\mathcal{H}(\bm{q}',\bm{y},\bm{x})$ such that $\chi(\dot F) = \chi$. Moreover, let $$\mathcal{H}^{\chi}(\bm{q}',\bm{y},\bm{\alpha})= \sqcup_{\bm{x}\in \bm{\alpha}} \mathcal{H}^{\chi}(\bm{q}',\bm{y},\bm{x}).$$
\begin{lemma}
    For fixed generic $J_{T_1}$, $\mathcal{H}^{\chi}(\bm{q}',\bm{y},\bm{x})$ is of dimension 0 and consists of discrete regular curves for all $\bm{q}',\  \bm{y},$ and $\bm{x}$ such that $\bm{x}$ is a tuple of bottom generators for the Morse-Bott intersections $N^*\alpha_i \cap (\Sigma \times \{0\})$.
\end{lemma}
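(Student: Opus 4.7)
The strategy is a Fredholm index calculation followed by a standard Sard--Smale transversality argument. First I would write down the index formula for $\mathcal{H}^\chi(\bm{q}',\bm{y},\bm{x})$ by adapting Theorem \ref{indextheorem} to the $T_1$-base, which has three positive asymptotic ends and three different classes of Lagrangian boundary labels (wrapped cotangent fibers, conormals of the $\alpha_i$, and the zero section). The crucial observation is that $\dim_\R \Sigma = 2$, so the coefficient $(n-2)$ of the $\chi$-term in the standard formula vanishes; the virtual dimension therefore depends only on the combinatorial data and the Maslov contributions from the three asymptotic ends, not on $\chi(\dot F)$. This is exactly what makes the statement ``of dimension $0$'' sensible uniformly in $\chi$.

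Next I would verify that each of the three asymptotic ends contributes Maslov index $0$. For $\bm{y}$ this is the Corollary following Lemma \ref{lemma 3.4}. For $\bm{q}'\in\phi^1_{H_V}(T^*_{\bm{q}}\Sigma)\cap(\Sigma\times\{0\})$, the same Legendre-transform argument applies to the pair $(T^*_{q_i}\Sigma,\Sigma\times\{0\})$: each such intersection corresponds to a $V$-perturbed geodesic from $q_i$ to some point of $\Sigma$, and the flat (resp.\ hyperbolic) rigidity argument used in Lemma \ref{lemma 3.4} again forces Morse index $0$. For $\bm{x}$, the restriction to the bottom generator of the Morse--Bott clean intersection $N^*\alpha_i \cap (\Sigma\times\{0\})\cong \alpha_i$, after the small perturbation of the zero section indicated in the preceding Remark (and Figure \ref{fig:bottomgenerator}), selects the unique intersection point with Maslov index $0$. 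Plugging the three zero contributions into the adapted index formula yields $\operatorname{ind}(u)=0$ for every $\chi\leq \kappa$ and every admissible triple $(\bm{q}',\bm{y},\bm{x})$.

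Given virtual dimension $0$, the remainder is the standard transversality package: for $J_{T_1}$ sufficiently generic within the class of compatible almost complex structures close to split and projecting holomorphically to $T_1$ (as in Section \ref{sec:HDHFreview}), the linearized Cauchy--Riemann operator is surjective at every $u\in \mathcal{H}^\chi(\bm{q}',\bm{y},\bm{x})$, so the moduli space is a smooth $0$-manifold of regular curves, i.e.\ a discrete set. Gromov compactness combined with the action bound inherent in the three asymptotic generators controls any limiting behavior and shows that this discrete set is in particular locally finite.

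The main technical obstacle, in my view, is the Morse--Bott step at $\partial_2 T_1$. The zero section is compact and does not have the cylindrical ends demanded by the wrapped formalism, so one must check that the perturbation $\Sigma\times\{0\}\leadsto \phi_{H'}(\Sigma)$ can be made compatible with the HDHF construction on $T_1\times T^*\Sigma$, that each cleanly intersecting circle $N^*\alpha_i\cap(\Sigma\times\{0\})$ splits into exactly two transverse intersection points with Maslov indices $0$ and $1$, and that asymptoting to the bottom generator is the correct boundary condition for the index formula to produce $\operatorname{ind}(u)=0$. Once this bookkeeping is settled, the lemma follows from the index computation and the standard genericity argument sketched above.
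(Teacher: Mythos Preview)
Your proposal is correct and follows essentially the same approach as the paper: show that each of the three asymptotic ends has grading zero (Corollary~3.5 for $\bm{y}$, the computation from \cite{honda2022higher}---or equivalently your Legendre-transform sketch---for $\bm{q}'$, and the bottom-generator choice for $\bm{x}$), conclude virtual dimension zero, and then invoke standard transversality. You are more explicit than the paper about why the dimension is independent of $\chi$ (the $(n-2)\chi$ term vanishes since $n=2$) and about the Morse--Bott perturbation, but the argument is the same.
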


\begin{proof}
    By Corollary 3.5, we have $|\bm{y}|=0$. Computing the grading for intersections of cotangent fibers with the zero section gives that $|\bm{q}'|=0$; see \cite{honda2022higher}. Since the $\bm{x}$ are bottom generators for our Morse-Bott intersection, it follows that $|\bm{x}|=0$ and so the virtual dimension of $\mathcal{H}^{\chi}(\bm{q}',\bm{y},\bm{x})$ is 0. The rest follows from standard transversality arguments. 
\end{proof}

\begin{lemma}
    Given $\bm{q}',\bm{y},\bm{\alpha}$, the moduli space $\mathcal{H}^{\chi}(\bm{q}',\bm{y},\bm{\alpha})$ consists of finitely many curves for each Euler characteristic $\chi$.
\end{lemma}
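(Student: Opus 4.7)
The plan is to combine a Morse-Bott perturbation of the zero section near the $N^*\alpha_i$ with a standard SFT-compactness argument, and then use the genericity of $J_{T_1}$ together with the discreteness established in the preceding lemma to conclude finiteness.

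First, I would invoke the Morse-Bott perturbation described in the remark before the preceding lemma, which replaces the clean intersection circles $N^*\alpha_i\cap(\Sigma\times\{0\})=\alpha_i$ by transverse intersections consisting of two points per $\alpha_i$. As Figure \ref{fig:bottomgenerator} shows, the Maslov-index calculation implies that only one of these, the bottom generator, can appear as an asymptote of an index-zero curve. Consequently the set of $\bm{x}\in\bm{\alpha}$ for which $\mathcal{H}^{\chi}(\bm{q}',\bm{y},\bm{x})$ is nonempty is finite, and the disjoint union $\mathcal{H}^{\chi}(\bm{q}',\bm{y},\bm{\alpha})=\sqcup_{\bm{x}}\mathcal{H}^{\chi}(\bm{q}',\bm{y},\bm{x})$ reduces to a union over finitely many $\bm{x}$.

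Second, for each such bottom-generator $\bm{x}$, I would show $\mathcal{H}^{\chi}(\bm{q}',\bm{y},\bm{x})$ is compact. The geometric energy of any $u\in\mathcal{H}^{\chi}(\bm{q}',\bm{y},\bm{x})$ is controlled by the Hamiltonian actions of the three asymptotic tuples $\bm{q}'$, $\bm{y}$, $\bm{x}$ together with $\kappa$ times the area of the fixed base $T_1$, a bound which is uniform once $\chi$ is fixed. Applying the SFT-compactness theorem then extracts from any sequence a limiting holomorphic building. Because $J_{T_1}$ is sufficiently generic and the preceding lemma shows $\mathcal{H}^{\chi}(\bm{q}',\bm{y},\bm{x})$ is $0$-dimensional and regular, any non-trivial level of such a building would have to carry strictly positive expected dimension while the total virtual dimension equals zero, which is impossible; hence the limit lies in $\mathcal{H}^{\chi}(\bm{q}',\bm{y},\bm{x})$ itself. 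A discrete compact set is finite, and this finishes the proof.

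The step I expect to be the main obstacle is the compactness argument itself. In addition to the classical disk bubbling and strip breaking at the three positive punctures, one must rule out degenerations of the branched cover $\pi_{T_1}\circ u$, such as branch points colliding over $T_1$ or escaping to the ends. These are controlled, respectively, by the exactness of all Lagrangians involved (which forbids sphere and disk bubbles), by the maximum-principle argument at wrapped ends carried out in \cite{colin2020applications}, and by the Riemann-Hurwitz constraint, which fixes the total number of branch points once $\chi$ is prescribed and so prevents any from running off to infinity.
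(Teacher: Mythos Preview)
Your proposal is correct and follows essentially the same route as the paper, though the paper's proof is much terser. The paper simply observes that each $\bm{y}$ determines a \emph{unique} $\bm{q}'$ and a unique $\bm{x}\in\bm{\alpha}$ (not merely finitely many), and then invokes the energy bound together with Gromov compactness in one sentence. Your Morse--Bott perturbation plus index argument recovers this uniqueness of $\bm{x}$, and your SFT-compactness discussion unpacks what the paper leaves implicit. One small remark: in your breaking argument the phrasing is inverted---a nontrivial broken configuration arising from an index-$0$ moduli space would force some component to have \emph{negative} expected dimension (not positive), which is what is generically excluded.
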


\begin{proof}
    Each $\bm{y}$ determines a unique $\bm{q}'$ and $\bm{x} \in \bm{\alpha}$. The energy bound along with Gromov compactness gives the result. 
\end{proof}

Fix a parametrization of the arc $\partial_2 T_1$ from $p_0$ to $p_2$ by $\tau: [0,1] \to \partial_2 T_1$. There exists a sufficiently generic consistent collection of almost complex structures such that for all $u \in \mathcal{H}(\bm{q}',\bm{y},\bm{\alpha})$,  $(\pi_{\Sigma}\circ u)\circ(\pi_{T_1 \circ u})^{-1}\circ\tau(t)$ consists of $\kappa$ distinct points on $\Sigma$ for each $t\in [0,1]$ and hence gives a path in $\text{UConf}_\kappa(\Sigma)$:
$$\gamma(u):[0,1] \longrightarrow \text{UConf}_\kappa(\Sigma),$$ $$t\mapsto (\pi_{\Sigma}\circ u)\circ(\pi_{T_1 \circ u})^{-1}\circ\tau(t). $$
Since $\gamma(0) = \bm{q}'$ and $\gamma(1) \in \bm{\alpha}$, it follows that $\gamma(u) \in \Omega(\text{UConf}_\kappa(\Sigma),\bm{q}',\bm{\alpha})$.

Define the evaluation map 
$$\mathcal{E}: CW(\sqcup_{i=1}^\kappa T_{q_i}^*\Sigma,\sqcup_{i=1}^\kappa N^* \alpha_i) \longrightarrow C_0(\Omega(\text{UConf}_\kappa(\Sigma) , \bm{q}', \bm{\alpha}))  \otimes \Z[[\hbar]] $$
$$\bm{y} \mapsto \sum_{u \in \mathcal{H}(\bm{q}',\bm{y},\bm{\alpha})} (-1)^{\natural(u)} \cdot  \hbar^{\kappa - \chi(u)} \cdot \gamma(u),$$
where $(-1)^{\natural(u)}$ is the sign assigned to $u$.

Since the perturbation term $V$ has small $W^{1,2}$-norm, the Hamiltonian vector field $X_{H_V}$ has small norm near the zero section $\Sigma$ and hence $\bm{q}'$ is close to $\bm{q}$. We choose non-intersecting short paths $\gamma_i$ on $\Sigma$ from $q_i$ to $q'_i$ for $i = 1, \dots, \kappa$. Pre-concatenating with $\{\gamma_i \}$ allows us to identify $\Omega(\text{UConf}_\kappa(\Sigma) , \bm{q}', \bm{\alpha})$ with $\Omega(\text{UConf}_\kappa(\Sigma) , \bm{q}, \bm{\alpha})$. We make this identification whenever possible. 

Next, we have a projection
$$\mathcal{P}:C_0(\Omega(\text{UConf}_\kappa(\Sigma) , \bm{q}, \bm{\alpha}))  \otimes \Z[[\hbar]] \to (H_0(\Omega(\text{UConf}_\kappa(\Sigma) , \bm{q}, \bm{\alpha}))  \otimes \Z[[\hbar]]) / \text{HOMFLY skein}$$
given by first taking the homotopy class of the path $\gamma(u)$ and then, viewing $\gamma(u)$ as a braid, quotienting by the HOMFLY skein relation (given in Definition \ref{def:braidskeinalgebra}). 

Composing the evaluation map and the projection, we arrive at the map $\mathcal{F} = \mathcal{P} \circ \mathcal{E}$. 

Let $BSk_\kappa(\Sigma,\bm{q},\bm{\alpha})$ denote the free $\Z[[\hbar]]$-module generated by homotopy classes of braids $\gamma \in \Omega(\text{UConf}_\kappa(\Sigma) , \bm{q}, \bm{\alpha})$ modulo the HOMFLY skein relation. Then
$$\mathcal{F}:CW(\sqcup_{i=1}^\kappa T_{q_i}^*\Sigma,\sqcup_{i=1}^\kappa N^* \alpha_i) \longrightarrow BSk_\kappa(\Sigma,\bm{q}, \bm{\alpha})$$ is given by 
$$\bm{y} \mapsto \sum_{u \in \mathcal{H}(\bm{q}',\bm{y},\bm{\alpha})} (-1)^{\natural(u)} \cdot  \hbar^{\kappa - \chi(u)} \cdot [\gamma(u)],$$
where $[\gamma(u)]$ is viewed as an equivalence class of braids modulo the HOMFLY skein relation.

\section{HDHF with conormal boundary conditions as a braid skein algebra module}

In this section we recall the equivalence between wrapped HDHF for cotangent fibers and the braid skein algebra of a surface. The reader is encouraged to look at \cite{honda2022higher} and \cite{morton2021dahas} for a more detailed exposition of this section's content. Let $\Sigma$ be a closed oriented surface of genus $>0$ and $ q_1,\dots, q_\kappa \in \Sigma$ be distinct points. 

\subsection{The braid skein algebra of a surface}
\hfill\\
Consider the braid group $B_\kappa(\Sigma \setminus \{ \ast\}, \bm{q})$ of $\kappa$-braids in the punctured surface $\Sigma \setminus \{\ast \}$ based at $\bm{q} = \{q_1,\dots, q_\kappa\}$. One way to view this is to take the thickened surface $\Sigma \times I$ with a fixed base string $\{\ast\} \times I$. In this case, the elements are made up of $\kappa$ strings oriented monotonically from $\Sigma \times \{0\}$ to $\Sigma \times \{1\}$ which do not intersect each other or the base string. Two braids are equivalent if they are isotopic to each other, with the isotopy avoiding the base string. 





\begin{definition}
\label{def:braidskeinalgebra}
    The {\em braid skein algebra $BSk_\kappa(\Sigma,\bm{q},\ast)$ (or the surface Hecke algebra) of the surface $\Sigma$} is the free $\Z[s^{\pm 1},c^{\pm 1}]$-module generated by $\kappa$-braids in the punctured surface $\Sigma \setminus \{\ast\}$ based at $\bm{q}$, up to isotopy which does not intersect $\{\ast\} \times [0,1]$, subject to the local relations:

   \begin{enumerate}
       \item the HOMFLY skein relation
       \begin{figure}[h]
    \centering
    \includegraphics[width=0.5\textwidth]{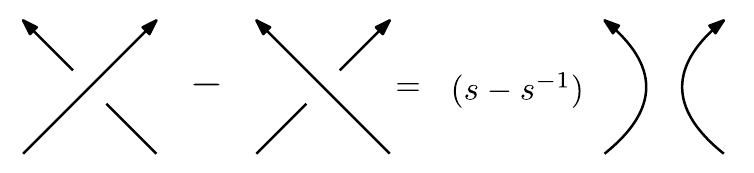}
    
    \label{fig:skein}
\end{figure}

\item the marked point relation
\\

\begin{figure}[h!]
    \centering
    \includegraphics[width=0.25\textwidth]{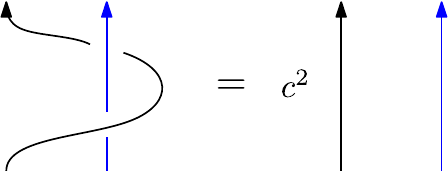}
    
    \label{fig:markedpoint}
\end{figure}

    \noindent where the string in blue corresponds to the marked point strand $\{\ast\} \times [0,1]$.

   \end{enumerate}
       
\end{definition}

Using the marked point relation, we define a slightly more useful \textit{$c$-deformed homotopy relation}. 
\begin{definition}
    The { \em $c$-deformed braid group  $B_\kappa(\Sigma,\bm{q})_c$ of $\Sigma$ based at $\bm{q}$} is generated by $B_\kappa(\Sigma \setminus \{\ast \},\bm{q})$ and a central element $c$, subject to the following $c$-deformed homotopy relation:
\begin{equation}
    [\gamma_2] = c^{2\langle H,\ast \rangle}[\gamma_1],
\end{equation}
where $\gamma_1,\gamma_2 \in \Omega(\text{UConf}_\kappa(\Sigma \setminus \{\ast \}),\bm{q})$, $H$ is a homotopy between $\gamma_1$ and $\gamma_2$, $\langle H, \ast \rangle$ is the algebraic intersection number for the homotopy $H$ defined in \cite{honda2022higher}, and $[\gamma_i]$ is the homology class of the braid $\gamma_i$. 
    
\end{definition}

The quotient of $\Z[s^{\pm 1}][B_\kappa(\Sigma,\bm{q})_c]$ by the HOMFLY skein relation gives the braid skein algebra $BSk_\kappa(\Sigma, \bm{q}, \ast)$.

\subsection{Hecke algebra realization in HDHF}
\hfill\\
\label{sec:HeckeHDHF}We briefly review the connection between the wrapped HDHF of contangent fibers on surfaces and the surface Hecke algebras defined in \cite{honda2022higher}. 

\begin{definition}
    The {\em wrapped HDHF chain complex of disjoint cotangent fibers with parameter $c$}, $CW(\sqcup_{i=1}^\kappa T^*_{q_i}\Sigma)_c$, is given by $CF(\phi^1_{H_V}(\sqcup_{i=1}^\kappa T^*_{q_i}\Sigma),\sqcup_{i=1}^\kappa T^*_{q_i}\Sigma)\otimes \Z[c^{\pm 1}]$ as a $\Z$-module and has enhanced $A_\infty$-operations to include $c$-coefficients. Specifically, 
    $$\mu^m(\bm{y}_1,\dots,\bm{y}_m) = \sum_{u \in \mathcal{M}^{\text{ind}=0}(\bm{y}_1,\dots,\bm{y}_m,\bm{y}_0)} (-1)^{\natural(u)} \cdot c^{2\langle u, \ast \rangle} \cdot \hbar^{\kappa - \chi(u)}\cdot \bm{y}_0.$$
\end{definition}
    
    Given a map $u \in \mathcal{M}(\bm{y}_1,\dots,\bm{y}_m,\bm{y}_0)$, $\langle u,\ast \rangle$ is the intersection number of $\pi_\Sigma(u)$ and $\ast$, after some modifications to $\pi_\Sigma(u)$ to ensure this is well defined. 
    
\begin{proposition}[Honda-Tian-Yuan]
    The $A_\infty$-algebra $CW(\sqcup_{i=1}^\kappa T^*_{q_i}\Sigma)_c$ is supported in degree zero, and hence is an ordinary algebra. 
\end{proposition}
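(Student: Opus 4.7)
The plan is to mimic the argument for the grading computation used in Lemma \ref{lemma 3.4} and Corollary 3.5, adapted to the cotangent fiber--to--cotangent fiber setting, and then to invoke the standard $A_\infty$ grading constraint to kill all higher operations. Throughout, $n = \dim \Sigma = 2$, so $|\hbar| = 2-n = 0$, and the parameter $c$ is declared to be a degree-zero central variable.

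First I would identify the generators. A generator of $CW(\sqcup_{i=1}^\kappa T^*_{q_i}\Sigma)_c$ is a $\kappa$-tuple $\bm{y} = (y_1, \dots, y_\kappa)$ where $y_i \in \phi^1_{H_V}(T^*_{q_i}\Sigma) \cap T^*_{q_{\sigma(i)}}\Sigma$ for some permutation $\sigma \in S_\kappa$, tensored with a power of $c$. Each $y_i$ corresponds to a time-1 Hamiltonian chord of $H_V$ from $T^*_{q_i}\Sigma$ to $T^*_{q_{\sigma(i)}}\Sigma$. Applying the Legendre transform $\mathcal{L}$ from Section \ref{sec:duallagrangian} to the boundary condition $Q = \{q_i\} \times \{q_{\sigma(i)}\}$, each $y_i$ is in bijection with a $V$-perturbed geodesic $\gamma_i : [0,1] \to \Sigma$ with $\gamma_i(0) = q_i$ and $\gamma_i(1) = q_{\sigma(i)}$, and the Conley--Zehnder index of $y_i$ equals the Morse index of $\gamma_i$ with respect to $\mathcal{A}_V$.

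Next I would compute the Morse indices, using exactly the same metric choice as in Lemma \ref{lemma 3.4}: the flat metric on $T^2$ if $\Sigma$ is the torus, and the hyperbolic metric of constant curvature $-1$ if $\text{genus}(\Sigma) > 1$. For $V$ with sufficiently small $W^{1,2}$-norm, each homotopy class of paths from $q_i$ to $q_{\sigma(i)}$ contains a unique $V$-perturbed geodesic, and the Morse index of every such geodesic is zero (length-minimizing in its homotopy class, and the Hessian is positive definite for $V$ small). Therefore $|y_i| = 0$ for each $i$, so $|\bm{y}| = \sum_i |y_i| = 0$. Multiplying by $c^{\pm 1}$ does not change the degree since $|c|=0$, so $CW(\sqcup_{i=1}^\kappa T^*_{q_i}\Sigma)_c$ is supported in degree zero as a graded $\Z[c^{\pm 1}][[\hbar]]$-module.

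Finally, I would upgrade "supported in degree zero" to "ordinary algebra." By the standard grading convention, $\mu^m$ is an operation of degree $2-m$. If both the inputs and the output lie in degree zero, and $|\hbar|=0$ so that the $\hbar^{\kappa-\chi}$ factor in (\ref{highermaps}) preserves degree, then $\mu^m$ can be nonzero only when $2-m = 0$, i.e.\ when $m = 2$. Hence $\mu^m \equiv 0$ for all $m \neq 2$, the single surviving product $\mu^2$ is associative (this is the $m=3$ $A_\infty$-relation applied to the already-vanishing $\mu^m$'s), and $CW(\sqcup_{i=1}^\kappa T^*_{q_i}\Sigma)_c$ is an ordinary (associative) algebra. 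The only delicate point is checking that the generic small perturbation $V$ making all Hamiltonian chords (across all permutations $\sigma$ simultaneously) nondegenerate can still be chosen small enough that the Morse index calculation on each sheet remains valid; this is handled by picking $V$ first to be generic among sufficiently $W^{1,2}$-small perturbations, so both conditions hold at once. I expect this step to be the main technical obstacle, but it is essentially identical to the perturbation argument already implicit in Section \ref{sec:AS iso}.
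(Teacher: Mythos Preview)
The paper does not prove this proposition; it is cited as a result of Honda--Tian--Yuan and only stated. Your argument is correct and is precisely the analogue, for the case $Q = \{q_i\}\times\{q_{\sigma(i)}\}$, of the argument the paper gives in Lemma~\ref{lemma 3.4} and Corollary~3.5 for $Q = \{q\}\times\alpha$; this is also essentially what is done in \cite{honda2022higher}.
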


With this on hand, HTY go on to prove the following theorem connecting the wrapped HDHF to the surface Hecke algebra $H_\kappa(\Sigma,\bm{q})$. 

\begin{theorem}[Honda-Tian-Yuan]
    There is an isomorphism of algebras 
    $$\mathcal{F}:CW(\sqcup_{i=1}^\kappa T^*_{q_i}\Sigma)_c \longrightarrow H_\kappa(\Sigma,\bm{q}) \otimes \Z[[\hbar]],$$
    where the surface Hecke algebra $H_\kappa(\Sigma,\bm{q})$ is naturally isomorphic to the braid skein algebra $BSk_\kappa(\Sigma,\bm{q},\ast)$.
\end{theorem}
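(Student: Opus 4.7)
The plan is to construct $\mathcal{F}$ via a suitable evaluation map (in the spirit of Section~\ref{sec:initialT1} but with target $\sqcup_{i}T^*_{q_i}\Sigma$ rather than $\sqcup_i N^*\alpha_i$), then verify that it is an algebra homomorphism, injective, and surjective.

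\textbf{Defining $\mathcal{F}$.} First, I would send each generator $\bm{y} \in CF(\sqcup_i \phi^1_{H_V}(T^*_{q_i}\Sigma), \sqcup_i T^*_{q_j}\Sigma)$, which is a $\kappa$-tuple of time-$1$ Hamiltonian chords from $T^*_{q_i}\Sigma$ to $T^*_{q_{\sigma(i)}}\Sigma$ for some $\sigma \in S_\kappa$, to the class in $BSk_\kappa(\Sigma,\bm q,\ast)$ of the $\kappa$-braid in $\Sigma \times [0,1]$ whose $i$-th strand is the Legendre transform $\mathcal{L}(y_i)(t) = \pi_\Sigma\circ y_i(t)$ thickened by the time parameter. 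Genericity of $V$ ensures the strands are disjoint (so the braid condition is satisfied), and Corollary~3.5-style grading computations together with the Abbondandolo--Schwarz theorem (Theorem~\ref{theorem 3.3}) show that each such $\bm y$ gives a well-defined braid, and that every homotopy class of braid based at $\bm q$ is realized by a unique Hamiltonian $\kappa$-tuple in $\mathcal{P}(H_V)$. This already suggests bijectivity at the level of the underlying $\Z[[\hbar]][c^{\pm1}]$-module, modulo the skein relations.

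\textbf{Algebra homomorphism.} Next, I would show $\mathcal{F}$ intertwines the $A_\infty$-operation $\mu^2$ on the Floer side with braid concatenation on the skein side. Given $\bm y_1, \bm y_2$, the rigid curves $u$ contributing to $\mu^2(\bm y_2, \bm y_1) = \bm y_0$ project under $\pi_\Sigma$ to immersed surfaces whose boundaries trace out precisely the concatenation (along $\partial_0 D_2$) of the braids $\mathcal{F}(\bm y_1)$ and $\mathcal{F}(\bm y_2)$, up to homotopy. The weights $c^{2\langle u,\ast\rangle}$ and $\hbar^{\kappa-\chi(u)}$ carried by $\mu^2$ match, respectively, the central $c$-weight from the $c$-deformed homotopy relation and the $\hbar$-deformation carried by the HOMFLY skein relation in $BSk_\kappa$. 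The $c$-matching is essentially bookkeeping: intersection of $\pi_\Sigma(u)$ with $\ast$ corresponds exactly to strand-crossings over the marked point strand.

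\textbf{HOMFLY skein relation.} This is the step I expect to be the main obstacle, so I would spend the most care here. The strategy is to analyze the boundary of $1$-dimensional moduli spaces $\mathcal{M}^{\mathrm{ind}=1,\chi}$ where two branch points of $\pi_{D_2}\circ u$ collide. Such a degeneration allows the Euler characteristic to jump by $1$: one sees a local model in which a smooth double-branched cover surface of Euler characteristic $\chi$ bubbles into a nodal configuration whose smoothing on the other side has Euler characteristic $\chi-1$. On the base $\Sigma$, the two possible smoothings of the resulting node correspond precisely to the over- and under-crossings in the HOMFLY skein triple, while the bubble itself is a disk contributing the identity braid weighted by $\hbar$. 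Summing signs and $\hbar$-powers over this cobordism equates to exactly the HOMFLY relation $\sigma - \sigma^{-1} = \hbar \cdot \mathrm{id}$. This is the Floer-theoretic origin of the skein relation and is the technical heart of the theorem; it requires a careful local gluing model, sign computation, and verification that no other codimension-$1$ degeneration (Morse--Bott drift, breaking, or escape to infinity via $H_V$) contributes.

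\textbf{Bijectivity and conclusion.} Finally, once $\mathcal{F}$ is shown to be a chain map respecting $\mu^2$ and to descend modulo HOMFLY and marked-point relations, the bijectivity follows from the remark above: Hamiltonian chords from $T^*_{q_i}\Sigma$ to $T^*_{q_j}\Sigma$ are in canonical bijection (via the Legendre transform together with $V$-perturbed geodesic uniqueness in each homotopy class, Lemma~\ref{lemma 3.4}) with homotopy classes of paths in $\Sigma$ based appropriately, and these assemble into a free $\Z[c^{\pm1}][[\hbar]]$-basis of $BSk_\kappa(\Sigma,\bm q,\ast)$ indexed by the $c$-deformed braid group modulo HOMFLY. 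The natural identification of $H_\kappa(\Sigma,\bm q)$ with $BSk_\kappa(\Sigma,\bm q,\ast)$ is Morton--Samuelson's and is used as a black box.
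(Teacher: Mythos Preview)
Your approach diverges from the paper's (which is summarizing the proof in \cite{honda2022higher}) at the very definition of $\mathcal{F}$, and this creates a genuine gap in the homomorphism step.

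The paper's $\mathcal{F}$ is \emph{not} the direct Legendre transform. It is an evaluation map that counts pseudo-holomorphic curves with boundary on $\phi^1_{H_V}(T^*_{\bm{q}}\Sigma)$, $T^*_{\bm{q}}\Sigma$, \emph{and the zero section} $\Sigma$ --- three Lagrangians over a base $T_1$, exactly as in Section~\ref{sec:initialT1}. The braid class is obtained by parametrizing the boundary arc lying on $\Sigma$, and summing over curves of varying $\chi$ supplies the $\hbar$-corrections built into $\mathcal{F}$. Your map, sending $\bm{y}$ to the single class $[\mathcal{L}(\bm{y})]$, is only the $\hbar=0$ part of this.

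This matters for the algebra-homomorphism step. The $\mu^2$-curves you analyze live over $D_2$ with all boundary arcs on cotangent fibers; there is no arc on $\Sigma$ from which to ``trace out the concatenation,'' so your boundary-tracing argument does not go through as stated. The paper's mechanism is different: pass to a $4$-punctured base $T_2$ with the zero section as the fourth Lagrangian and analyze the boundary of the index-$1$ moduli space. Two degenerations yield $\mathcal{F}(\mu^2(\bm{y},\bm{y}'))$ and $\mathcal{F}(\bm{y})\mathcal{F}(\bm{y}')$ respectively; the third (nodal degeneration along $\Sigma$, which is your branch-point-collision picture) contributes zero in total. So the HOMFLY compatibility is not a separate verification --- it is absorbed into this cobordism argument once the zero-section arc is present in the definition of $\mathcal{F}$.

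Finally, bijectivity in the paper is obtained by first showing $\mathcal{F}|_{\hbar=0}$ is an isomorphism and then bootstrapping through the $\hbar$-filtration (exactly the argument reproduced in Section~\ref{sec:Fiso proof}); since the actual $\mathcal{F}$ carries nontrivial $\hbar$-corrections, this step does not follow immediately from the chord--geodesic bijection you invoke.
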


We run through an (informal) overview of the proof as the main ideas will be modified and applied in the proof of Theorem \ref{thm:Fiso1} in Section \ref{sec:Fiso proof}. 

The map $\mathcal{F}$ is an evaluation map similar to that in Section \ref{sec:initialT1}. It is constructed by considering a moduli space of pseudo-holomorphic curves bounded by $\phi^1_{H_V}(T^*_{\bm{q}}\Sigma),\  T^*_{\bm{q}}\Sigma,$ and $\Sigma$. The authors then make use of the construction of the braid skein algebra coming from the unordered configuration space to relate this to the surface Hecke algebra. 

The next step is to show that $\mathcal{F}$ is indeed a homomorphism of algebras. This is done by taking the moduli space of index 1 curves projecting to a 4-punctured disk with boundary conditions given by $\phi^2_{H_V}(T^*_{\bm{q}}\Sigma),$ $\phi^1_{H_V}(T^*_{\bm{q}}\Sigma),$ $T^*_{\bm{q}}\Sigma,$ and $\Sigma$. An inspection of the boundary of the compactification of this space concludes that $\mathcal{F}$ respects the algebra structure. More specifically, the only breakings that occur are ones that correspond to $\mathcal{F}(\mu^2(\bm{y,y'}))$ or $\mathcal{F}(\bm{y})\mathcal{F}(\bm{y}')$. 

The final step is to show that $\mathcal{F}|_{\hbar=0}$ is an isomorphism and then use the algebra homomorphism properties to prove it is a bijection when $\hbar$ is reintroduced. 

We will repeat and modify much of the argument in our proof of Theorem \ref{thm:Fiso1}.

\subsection{The parameter $c$}\label{sec:parameterc}
\hfill\\
We enhance $CW(\sqcup_{i=1}^\kappa T^*_{q_i}\Sigma,\sqcup_{i=1}^\kappa  N^*\alpha_i)$ to include the parameter $c$. Let $$CW(\sqcup_{i=1}^\kappa T_{q_i}^*\Sigma,\sqcup_{i=1}^\kappa N^* \alpha_i)_c := CW(\sqcup_{i=1}^\kappa T_{q_i}^*\Sigma,\sqcup_{i=1}^\kappa N^* \alpha_i) \otimes \Z[c^{\pm 1}].$$ Consider the updated evaluation map: 
$$\mathcal{E}: CW(\sqcup_{i=1}^\kappa T_{q_i}^*\Sigma,\sqcup_{i=1}^\kappa N^* \alpha_i)_c \longrightarrow C_0(\Omega(\text{UConf}_\kappa((\Sigma) , \bm{q}, \bm{\alpha})) \otimes \Z[c^{\pm 1}]  \otimes \Z[[\hbar]] $$
given by 
$$\bm{y} \mapsto \sum_{u \in \mathcal{H}(\bm{q}',\bm{y},\bm{\alpha})} (-1)^{\natural(u)} \cdot c^{2\langle u, \ast \rangle} \cdot  \hbar^{\kappa - \chi(u)} \cdot \gamma(u),$$

\noindent where $\langle u, \ast \rangle := \langle [\pi_{\Sigma}(u)]',\ast \rangle$ is defined in \cite[Section 5]{honda2022higher} .  

\begin{definition}
\label{def:c-deformedbraidskeinalg}
The {\em braid skein group $BSk_\kappa(\Sigma,\bm{q},\bm{\alpha}, \ast)$ from $\bm{q}$ to $\bm{\alpha}$ on $\Sigma \setminus \{\ast\}$}
is the free $\Z[[\hbar]]$-module generated by $c$-deformed homotopy classes of paths in $$\Omega(\text{UConf}_\kappa(\Sigma),\bm{q}, \bm{\alpha}) = \{\gamma \in C^0([0,1],\text{UConf}_\kappa(\Sigma))~|~\gamma(0) = \bm{q}, \ \gamma(1) \in \bm{\alpha} = \alpha_1 \times \dots \times \alpha_\kappa\}$$ modulo the HOMFLY skein relation.     
\end{definition}

\begin{remark}
    Given a braid $\gamma$, its $c$-deformed homotopy class $[\gamma]_c$ is the set of braids equivalent to it under the local relations given in Definition \ref{def:braidskeinalgebra}. Using these relations, every braid is equivalent to a $\Z[\hbar,c^{\pm 1}]$-combination of $\kappa$-tuples of perturbed geodesics. In the spirit of \cite{abbondandolo2010floer}, we can flow the braid $\gamma$ by the Morse function defined in Section \ref{sec:geoaction}. Every time a crossing switches from positive to negative, or vice versa, the flow will bifurcate according to the HOMFLY skein relation. Similarly, crossing the strand over the puncture $\ast$ will pick up factors of $c$. Continuing this flow, we arrive at a $\Z[\hbar,c^{\pm 1}]$-combination of $\kappa$-tuples of perturbed geodesics. Since the relations of our $c$-deformed homotopy classes are the same as those during the Morse flow, the two equivalence classes will be the same. 
\end{remark}

 Composing $\mathcal{E}$ with the projection which takes the $c$-deformed homotopy class and quotients out by the HOMFLY skein relation gives the map 
\begin{equation}
\label{eq:Ffull}
    \mathcal{F}:CW(\sqcup_{i=1}^\kappa T_{q_i}^*\Sigma,\sqcup_{i=1}^\kappa N^* \alpha_i)_c \longrightarrow BSk_\kappa(\Sigma,\bm{q},\bm{\alpha}, \ast).
\end{equation}

\subsection{The proof of Theorem \ref{thm:Fiso1}, $\kappa=1$ case}\label{sec:proofofthm1.1}
\hfill\\
In this subsection we prove Theorem \ref{thm:Fiso1} for $\kappa=1$. Let $\mathcal{F}_0$ denote the specialization $\mathcal{F}_{\hbar=0}$.

We start by briefly reviewing the chain map
$$\Theta: CM_\ast(\Omega^{1,2}(\Sigma,q,\alpha),\mathcal{A}_V) \longrightarrow CW(T_q^* \Sigma, N^* \alpha)$$ constructed by Abbondandolo and Schwarz in \cite[Theorem 3.3]{abbondandolo2010floer}, where the domain $CM_\ast(\Omega^{1,2}(\Sigma,q,\alpha),\mathcal{A}_V)$ is the Morse complex of the function $\mathcal{A}_V$ defined by: $$\mathcal{A}_V(\gamma) = \int_0^1 L_V(t,\gamma, \dot{\gamma}) dt ,$$ for $\gamma \in \Omega^{1,2}(\Sigma,q,\alpha)$. Both complexes are concentrated at grading 0, so $\Theta$ is an isomorphism from the group generated by index 0 critical points of $\mathcal{A}_V$ to the wrapped Floer group $CW(T_q^* \Sigma, N^* \alpha)$.

In what follows, we identify $CM_0(\Omega^{1,2}(\Sigma,q,\alpha),\mathcal{A}_V)$ with $CM_0(\Omega(\Sigma,q,\alpha),\mathcal{A}_V)$.

Given $y \in CM_0(\Omega(\Sigma,q,\alpha),\mathcal{A}_V), x \in CW(T_q^* \Sigma, N^* \alpha)$, AS construct the space $\mathcal{M}(y,x)$ of maps 
$$u: (-\infty, 0] \times [0,1] \longrightarrow T^*\Sigma$$
solving the Floer equation \eqref{Floereq}, which converge to the Hamiltonian chord representing $x$ at $-\infty$, satisfy boundary conditions $T^*_q\Sigma$ along $(-\infty,0] \times \{0\}$, $N^* \alpha$ along $(-\infty,0] \times \{1\}$, and such that the image of $u(\{0\} \times [0,1])$ under the projection from $T^* \Sigma$ to $\Sigma$ is a path on $\Sigma$ lying in the descending manifold of $y$ with respect to the negative gradient flow of $\mathcal{A}_V$. 

Letting $\#\mathcal{M}(y,x)$ be the count of such maps, we have $$\Theta(y) = \sum \#\mathcal{M}(y,x) x.$$

Let $$\Theta_c: CM_\ast(\Omega(\Sigma,q,\alpha),\mathcal{A}_V) \otimes \Z[c^{\pm 1}] \longrightarrow CW(T_q^* \Sigma, N^* \alpha)_c$$ be the $\Z[c^{\pm 1}]$-linear extension of $\Theta$.\\

We make an additional identification which allows us to apply results of \cite{abbondandolo2010floer}. 

\begin{proposition}
\label{prop:Morsebraidskeiniso}
    $BSk_{\kappa=1}(\Sigma,q,\alpha, \ast) \simeq CM_0(\Omega(\Sigma,q,\alpha),\mathcal{A}_V) \otimes \Z[c^{\pm 1}] $
\end{proposition}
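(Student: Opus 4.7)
The strategy is to show that both sides are canonically free $\Z[c^{\pm 1}]$-modules on the same index set, namely $\pi_0(\Omega(\Sigma,q,\alpha))$, and to exhibit the isomorphism as the natural map sending a $V$-perturbed geodesic to its $c$-deformed homotopy class.

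First I would analyze the braid skein side. Since $\kappa=1$, every element of $BSk_{\kappa=1}(\Sigma,q,\alpha,\ast)$ consists of a single strand, so no crossings can occur and the HOMFLY skein relation is vacuous. The only remaining relation is the $c$-deformed homotopy relation: two paths $\gamma_1,\gamma_2\subset\Sigma\setminus\{\ast\}$ from $q$ to $\alpha$ that are homotopic in $\Sigma$ via a homotopy $H$ are identified via $[\gamma_2]_c=c^{2\langle H,\ast\rangle}[\gamma_1]_c$. Choosing a single representative $\gamma_{[\gamma]}\in \Sigma\setminus\{\ast\}$ in each class $[\gamma]\in \pi_0(\Omega(\Sigma,q,\alpha))$ therefore yields a $\Z[c^{\pm 1}]$-basis of $BSk_{\kappa=1}(\Sigma,q,\alpha,\ast)$ indexed by $\pi_0(\Omega(\Sigma,q,\alpha))$.

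Next I would analyze the Morse side. The proof of Lemma \ref{lemma 3.4} shows that, for the flat metric on $T^2$ or the hyperbolic metric in higher genus together with $V$ of sufficiently small $W^{1,2}$-norm, each class in $\pi_0(\Omega(\Sigma,q,\alpha))$ contains a unique $V$-perturbed geodesic $\gamma_V$, and every such $\gamma_V$ has Morse index $0$ with respect to $\mathcal{A}_V$. Hence $CM_0(\Omega(\Sigma,q,\alpha),\mathcal{A}_V)$ is the free $\Z$-module on $\pi_0(\Omega(\Sigma,q,\alpha))$, so $CM_0(\Omega(\Sigma,q,\alpha),\mathcal{A}_V)\otimes \Z[c^{\pm 1}]$ is the free $\Z[c^{\pm 1}]$-module on the same index set.

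Finally I would assemble the isomorphism by sending each $V$-perturbed geodesic $\gamma_V$, after a $C^0$-small perturbation so that it avoids the puncture $\ast$ (automatic for generic $V$ or for $\ast$ chosen off the discrete collection of $V$-perturbed geodesics), to its $c$-deformed homotopy class $[\gamma_V]_c$, and extending $\Z[c^{\pm 1}]$-linearly. The resulting map carries one distinguished $\Z[c^{\pm 1}]$-basis onto the other by the two bijections with $\pi_0(\Omega(\Sigma,q,\alpha))$ constructed above, so it is a $\Z[c^{\pm 1}]$-module isomorphism. I do not see a substantive obstacle here: the $\kappa=1$ restriction trivializes the HOMFLY relation, Lemma \ref{lemma 3.4} furnishes a unique Morse representative in each homotopy class, and the $c$-deformed relation is precisely designed to collapse different $\Sigma\setminus\{\ast\}$-representatives of a single $\Sigma$-homotopy class to $\Z[c^{\pm 1}]$-multiples of one another, so the only genuine verification is that this built-in compatibility suffices.
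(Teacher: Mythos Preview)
Your proposal is correct and follows essentially the same approach as the paper. Both arguments identify each side as a free $\Z[c^{\pm 1}]$-module indexed by $\pi_0(\Omega(\Sigma,q,\alpha))$---using Lemma~\ref{lemma 3.4} to produce a unique $V$-perturbed geodesic in each homotopy class on the Morse side, and using the $c$-deformed homotopy relation (with the HOMFLY relation vacuous for $\kappa=1$) on the braid skein side---and then match bases; the paper writes the map in the direction $[\gamma_1]_c\mapsto c^{2n}\gamma_2$ while you write its inverse $\gamma_V\mapsto [\gamma_V]_c$, but the content is the same, and your version spells out a few details (vacuity of HOMFLY, avoidance of $\ast$) that the paper leaves implicit.
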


\begin{proof}
Elements in $BSk_{\kappa=1}(\Sigma,\bm{q},\bm{\alpha}, \ast)$ are $c$-deformed homotopy classes of paths starting at $q$ and ending on $\alpha$. Every such path is homotopic to a unique perturbed geodesic on $\Sigma$ with the same boundary conditions. Let $\gamma_1$ be a path homotopic to a perturbed geodesic $\gamma_2$ and let $n$ be the signed intersection number of this homotopy with the marked point $\ast$. The map sending $[\gamma_1]_c \mapsto c^{2n}\gamma_2$ is the desired isomorphism. 
\end{proof}

We prove the following proposition, recreating a proof similar to Abouzaid's \cite{abouzaid2012wrapped} in the case of cotangent fibers. 
Let $\tilde{\mathcal{F}}_0$ be the composition of $\mathcal{F}_0$ with the isomorphism described in Proposition \ref{prop:Morsebraidskeiniso}.

\begin{proposition}
\label{Proposition 4.10}
Let $\kappa=1$. The map $$\tilde{\mathcal{F}}_0: CW(T_q^* \Sigma, N^* \alpha)_c \longrightarrow CM_0(\Omega(\Sigma,q,\alpha),\mathcal{A}_V) \otimes \Z[c^{\pm 1}] $$ is an isomorphism of chain complexes. Moreover, $\tilde{\mathcal{F}}_0$ is an inverse to $\Theta_c$. 
\end{proposition}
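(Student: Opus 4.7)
The plan is to model the argument on Abouzaid's proof that the Abbondandolo-Schwarz map is invertible for cotangent fibers \cite{abouzaid2012wrapped}, adapted to the present mixed boundary condition with one cotangent fiber $T^*_q\Sigma$ and one conormal bundle $N^*\alpha$. Since both $CW(T^*_q\Sigma,N^*\alpha)_c$ and $CM_0(\Omega(\Sigma,q,\alpha),\mathcal{A}_V)\otimes\Z[c^{\pm 1}]$ are free $\Z[c^{\pm 1}]$-modules concentrated in degree zero (Corollary 3.5 together with Lemma 3.4), and $\Theta_c$ is already known to be an isomorphism by \cite{abbondandolo2010floer} extended $\Z[c^{\pm 1}]$-linearly, it suffices to establish $\tilde{\mathcal{F}}_0\circ\Theta_c=\mathrm{id}$; the statement that $\tilde{\mathcal{F}}_0$ is an isomorphism of chain complexes follows.

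I would first unpack the composition: given a critical point $y$ of $\mathcal{A}_V$, $\Theta_c(y)=\sum_x\#\mathcal{M}(y,x)\,x$ is a sum over Hamiltonian chords $x$ counted by AS half-strips, and applying $\tilde{\mathcal{F}}_0$ produces a signed sum over broken configurations $(u_1,u_2)$ consisting of an AS half-strip $u_1\in\mathcal{M}(y,x)$ and an evaluation disk $u_2\in\mathcal{H}(q',x,\alpha)$ (only disks contribute since $\hbar=0$ forces $\chi(u)=1$ when $\kappa=1$), weighted by $(-1)^{\natural(u_1)+\natural(u_2)}c^{2\langle u_2,\ast\rangle}$ and outputting the $c$-deformed homotopy class of the path on $\Sigma$ obtained by tracing the $\Sigma$-boundary of $u_2$ and pre-concatenating with a descending-manifold arc from $y$.

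I would then realize these broken pairs as the $R=+\infty$ end of a parametrized moduli space $\mathcal{N}_R(y,\gamma)$, $R\in[0,\infty]$, of glued objects: for $R$ large, a long Floer half-strip with boundary on $T^*_q\Sigma$ and $N^*\alpha$ is connected through a gluing region to an evaluation disk whose $\Sigma$-boundary projects to a path lying in the descending manifold of $y$. As $R\to 0$ the Floer strip collapses and one is left with a rigid configuration corresponding to the Legendre transform of $y$ itself; uniqueness of the $V$-perturbed geodesic in each homotopy class (Lemma 3.4) and smallness of $V$ guarantee that this rigid limit is the single expected curve. The signed count of the boundary of the compactified one-dimensional component of $\overline{\mathcal{N}}_R(y,\gamma)$ is zero, yielding $\tilde{\mathcal{F}}_0\circ\Theta_c(y)=y$ in $CM_0\otimes\Z[c^{\pm 1}]$. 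The $c$-factors match because $2\langle u_2,\ast\rangle$ equals twice the algebraic intersection of the homotopy between the concatenated boundary path and the unique perturbed geodesic representative through the image of $u_2$, which is precisely the isomorphism of Proposition 4.9.

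The main technical obstacle is controlling the boundary of $\overline{\mathcal{N}}_R$. Exactness of $T^*\Sigma$ and of the Lagrangians $T^*_q\Sigma$, $N^*\alpha$, $\Sigma$ excludes sphere and disk bubbling; Floer strip breaking at finite $R$ contributes only terms that vanish by the degree-zero concentration and an action-filtration argument; and transversality is achieved by generic choices of $J_{T_1}$ and $V$ compatible with those in \cite{honda2022higher}. A secondary subtlety is identifying the $R=0$ stratum with the single rigid curve corresponding to $y$ itself, which requires the uniqueness of perturbed geodesics in a homotopy class (hyperbolic or flat metric), together with a Morse-Bott–type perturbation argument near the zero section as in Section 3.3. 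Once these are in place, the inverse property follows, completing the proof.
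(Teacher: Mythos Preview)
Your proposal is correct and takes essentially the same approach as the paper: a one-parameter moduli space of finite Floer strips $\mathcal{C}(y,z;a)$ on $[0,a]\times[0,1]$ (your $\mathcal{N}_R$) interpolating between constant solutions at $a=0$ (the identity) and broken configurations at $a\to\infty$ (the composition $\tilde{\mathcal{F}}_0\circ\Theta$), with the intermediate boundary strata $\mathcal{T}(y,y')\times\overline{\mathcal{C}}(y',z)\cup\overline{\mathcal{C}}(y,z')\times\mathcal{T}(z',z)$ empty because all critical points of $\mathcal{A}_V$ have Morse index zero. The paper likewise notes that this yields $\tilde{\mathcal{F}}_0\circ\Theta_c=\mathrm{id}$ on the nose rather than merely up to chain homotopy.
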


\begin{proof}
Since $\Theta$ is an isomorphism of chain complexes, it suffices to show that the composition 
$$CM_0(\Omega(\Sigma,q,\alpha),\mathcal{A}_V) \otimes \Z[c^{\pm 1}]\overset{\Theta_c}\longrightarrow CW(T_q^* \Sigma, N^* \alpha)_c \overset{\tilde{\mathcal{F}}_{0}}\longrightarrow CM_0(\Omega(\Sigma,q,\alpha),\mathcal{A}_V) \otimes \Z[c^{\pm 1}]$$
is homotopic to the identity map on $CM_0(\Omega(\Sigma,q,\alpha),\mathcal{A}_V) \otimes \Z[c^{\pm 1}]$. We show that it is not just homotopic to the identity, but is in fact the identity map on the nose. It follows then that $\tilde{\mathcal{F}}_{0} = \Theta^{-1}_c$.
Let $y$ and $z$ be critical points of the Lagrangian actional $\mathcal{A}_V$ which is Fenchel dual to $H_V$ and let $a$ be a positive real number.  

Define $\mathcal{C}(y,z;a)$ to be the moduli space of maps  
$$u: [0,a] \times [0,1] \longrightarrow T^*\Sigma$$ which solve the Floer equation \eqref{Floereq} with boundary conditions $T^*_q\Sigma$ along $[0,a] \times \{0\}$, $N^* \alpha$ along $[0,a] \times \{1\}$, and such that 
$u(\{0\} \times [0,1])$ is contained in the zero section and, considered as a path on $\Sigma$, lies on the ascending manifold of $z$, while the image of $u(\{a\} \times [0,1])$ under the projection from $T^* \Sigma$ to $\Sigma$ is a path on $\Sigma$ lying on the descending manifold of $y$ with respect to the the negative gradient flow of $\mathcal{A}_V$. This is shown in the central part of Figure \ref{fig:inverse}. 

Write $\mathcal{C}(y,z) := \sqcup_{a \in [0,\infty)} \mathcal{C}(y,z;a)$ and let $\overline{\mathcal{C}}(y,z)$ be its Gromov compactification. 

When $a=0$, any solution in $\mathcal{C}(y,z;a)$ is necessarily constant. Thus, the count of rigid elements of $\mathcal{C}(y,z;0)$ gives the identity map on $CM_0(\Omega(\Sigma,q,\alpha),\mathcal{A}_V) \otimes \Z[c^{\pm 1}]$. 

Letting $a$ go to $+\infty$, a family of maps $u_a$ defined on finite strips $[0,a] \times [0,1]$ breaks into two maps $u_-, u_+$ defined on semi-infinite strips, as shown in the bottom right part of the figure below. This boundary component is precisely the composition $\tilde{\mathcal{F}}_{0} \circ \Theta$.

The remaining boundary strata occur at finite $a$ when the projection of $u(\{a\} \times [0,1])$ to $\Sigma$ escapes to the ascending manifold of a critical point $y'$ which differs from $y$. Similarly, the image of $u(\{0\} \times [0,1])$ may converge to the descending manifold of a critical point $z' \neq z$. Such boundary strata are in bijective correspondence with 
$$\mathcal{T}(y,y') \times \overline{\mathcal{C}}(y',z) \cup \overline{\mathcal{C}}(y,z')\times \mathcal{T}(z',z),$$
where $\mathcal{T}(y,y')$ is the moduli space of gradient trajectories from $y$ to $y'$.

For a general manifold this would give a chain homotopy. However, all critical points of $\mathcal{A}_V$ have grading 0 and so there are no gradient trajectories between critical points. Thus there are no boundary strata of this form.

It follows then that, up to signs, $\mbox{id} = \tilde{\mathcal{F}}_{0} \circ \Theta$, where $\mbox{id}$ is the identity map on $$CM_0(\Omega(\Sigma,q,\alpha),\mathcal{A}_V) \otimes \Z[c^{\pm 1}].$$
Moreover, identifying $x\in CW(T_q^* \Sigma, N^* \alpha)_c$ with a time-1 Hamiltonian chord, $\tilde{\mathcal{F}}_{0}$ maps $x$ to the homotopy class of its Legendre transform $[\mathcal{L}(x)]$. 
\end{proof}

\begin{figure}[h]

    \centering
    \includegraphics[width=0.4\textwidth]{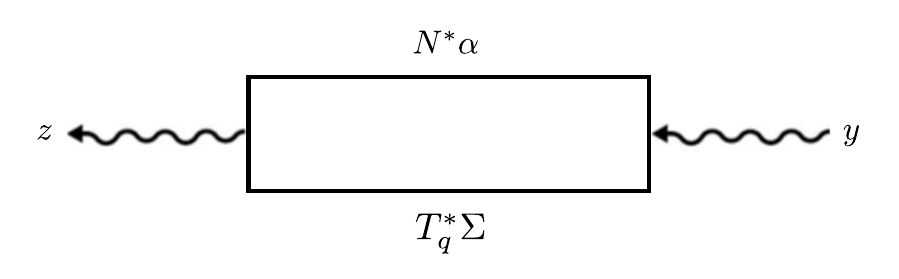}

\centering
\begin{subfigure}{.5\textwidth}
  \centering
  \includegraphics[width=.9\linewidth]{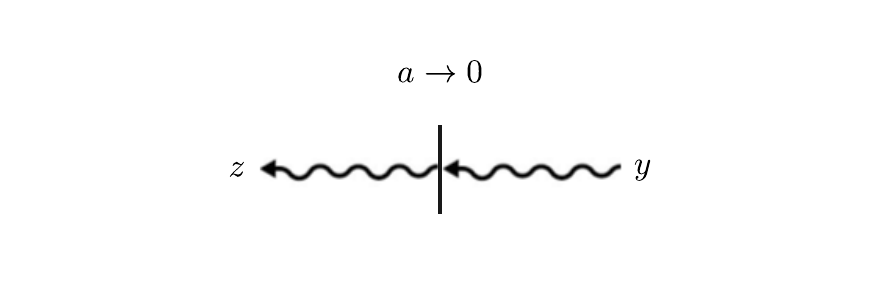}
  \caption{$a=0$ boundary}
  \label{fig:inversea0}
\end{subfigure}%
\begin{subfigure}{.5\textwidth}
  \centering
  \includegraphics[width=.8\linewidth]{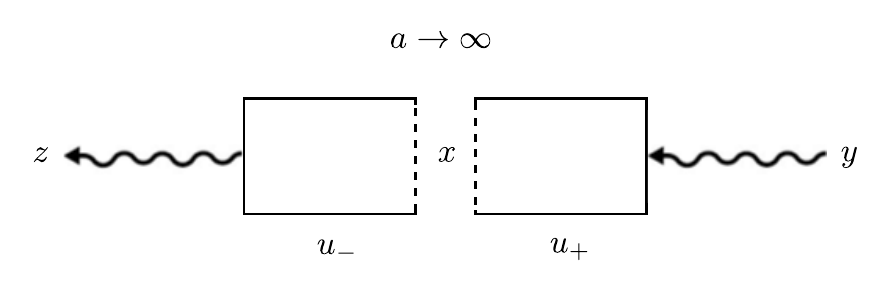}
  \caption{$a\to \infty$ boundary}
  \label{fig:inverseinfinity}
\end{subfigure}
\caption{$\overline{\mathcal{C}}(y,z;a)$ with its boundary. The arrows represent downward gradient flows.} 
\label{fig:inverse}
\end{figure}

 Composing $\tilde{\mathcal{F}}_0$ with the isomorphism  $$CM_0(\Omega(\Sigma,q,\alpha),\mathcal{A}_V) \otimes \Z[c^{\pm 1}] \hookrightarrow BSk_1(\Sigma,q,\alpha, \ast)$$
 given by viewing a geodesic as a path with the same boundary conditions proves that $\mathcal{F}_{0,\kappa=1}$ is an isomorphism.

\subsection{The proof of Theorem \ref{thm:Fiso1}, general case}
\hfill\\
\label{sec:Fiso proof}We first extend Proposition \ref{Proposition 4.10} to $\kappa\geq1$.

\begin{lemma}
\label{lem:hbar0}
    Let $\kappa \geq 1$. Then $\mathcal{F}_0$ is an isomorphism. 
\end{lemma}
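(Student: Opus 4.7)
The plan is to reduce the general $\kappa$ case to the $\kappa=1$ case already handled in Proposition \ref{Proposition 4.10} by using the fact that setting $\hbar=0$ kills all contributions from branched pseudoholomorphic curves and therefore decouples the $\kappa$ strands.

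First I would exploit the definition of $\mathcal{F}$ in \eqref{eq:Ffull}: because each nonzero contribution is weighted by $\hbar^{\kappa-\chi(u)}$, at $\hbar=0$ only terms with $\chi(u)=\kappa$ survive, so $\dot{F}$ must be the maximum-Euler-characteristic domain, namely a disjoint union of $\kappa$ thrice-punctured disks. Since $T_1$ is simply connected, an unbranched $\kappa$-fold cover of $T_1$ is trivial, so every such $u$ is the disjoint union of $\kappa$ pseudoholomorphic disks $u_1,\dots,u_\kappa$, where $u_i$ has boundary conditions $\phi^1_{H_V}(T^*_{q_i}\Sigma)$, $N^*\alpha_{\sigma(i)}$, and the zero section, and $\sigma\in S_\kappa$ is the permutation attached to the generator $\bm{y}$. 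This gives the product decomposition
\begin{equation*}
  \mathcal{H}^{\chi=\kappa}(\bm{q}',\bm{y},\bm{\alpha})\;\cong\;\prod_{i=1}^{\kappa}\mathcal{H}^{\chi=1}(q_i',y_i,\alpha_{\sigma(i)}),
\end{equation*}
with signs, $c$-weights $c^{2\langle u,\ast\rangle}=\prod_i c^{2\langle u_i,\ast\rangle}$, and the braid $\gamma(u)$ all splitting strand-by-strand since the component disks are geometrically disjoint.

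Second, I would note that the domain of $\mathcal{F}_0$ decomposes as
\begin{equation*}
  CW\bigl(\sqcup_i T^*_{q_i}\Sigma,\sqcup_i N^*\alpha_i\bigr)_c\;\cong\;\bigoplus_{\sigma\in S_\kappa}\;\bigotimes_{i=1}^{\kappa}CW\bigl(T^*_{q_i}\Sigma,N^*\alpha_{\sigma(i)}\bigr)_c,
\end{equation*}
using that each generator is indexed by a permutation $\sigma$ together with a chord in each pair. On the target side, $BSk_\kappa(\Sigma,\bm{q},\bm{\alpha},\ast)|_{\hbar=0}$ admits an analogous decomposition: the HOMFLY skein relation at $\hbar=0$ forces positive and negative crossings to agree, so every $c$-deformed braid class is determined by its underlying permutation and its $\kappa$-tuple of strand-wise $c$-deformed homotopy classes in $\Sigma\setminus\{\ast\}$. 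Under these identifications $\mathcal{F}_0$ becomes, strand by strand, the direct sum over $\sigma\in S_\kappa$ of tensor products of the $\kappa=1$ evaluation maps $\mathcal{F}_{0,\kappa=1}^{(i,\sigma)}:CW(T^*_{q_i}\Sigma,N^*\alpha_{\sigma(i)})_c\to BSk_1(\Sigma,q_i,\alpha_{\sigma(i)},\ast)$. Each of these factors is an isomorphism by Proposition \ref{Proposition 4.10}, so $\mathcal{F}_0$ is itself an isomorphism.

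The main obstacle I anticipate is rigorously justifying the decomposition of $BSk_\kappa(\Sigma,\bm{q},\bm{\alpha},\ast)$ at $\hbar=0$ and checking that the marked-point weights split cleanly when the strands are pulled apart. The latter is essentially the additivity of the algebraic intersection number $\langle \pi_\Sigma(u),\ast\rangle$ over connected components, which follows from the definition of $\langle u,\ast\rangle$ in Section \ref{sec:parameterc}; the former amounts to unraveling that at $\hbar=0$ the HOMFLY quotient of surface braids becomes the group algebra of tuples of strand homotopy classes with a permutation. If that decomposition proves awkward to write down directly, a safer alternative is to imitate the proof of Proposition \ref{Proposition 4.10} verbatim in the multi-strand setting: define a $\kappa$-strand Morse-theoretic map $\Theta_c^{(\kappa)}$ by counting $\kappa$-tuples of AS-type finite strips to the zero section of $T^*\Sigma$, and use the fact that, at $\hbar=0$, the corresponding moduli spaces $\overline{\mathcal{C}}(\bm{y},\bm{z};a)$ again decouple into $\kappa$ independent copies of $\overline{\mathcal{C}}(y_i,z_i;a)$. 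The $a=0$ boundary then recovers the identity and the $a\to\infty$ boundary recovers $\mathcal{F}_0\circ\Theta_c^{(\kappa)}$, exactly as in the $\kappa=1$ argument, establishing that $\mathcal{F}_0$ is the inverse of $\Theta_c^{(\kappa)}$.
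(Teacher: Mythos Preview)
Your proposal is correct and follows essentially the same route as the paper: both observe that at $\hbar=0$ only curves with $\chi(u)=\kappa$ contribute, so the domain splits into $\kappa$ disjoint disks, then decompose both source and target as $\bigoplus_{\sigma\in S_\kappa}\bigotimes_i$ of the $\kappa=1$ pieces and invoke Proposition~\ref{Proposition 4.10} on each factor. The paper phrases the conclusion by writing down an explicit inverse $\mathcal{F}_0^{-1}$ strandwise rather than saying ``tensor product of isomorphisms,'' but this is cosmetic; your anticipated obstacle about the $\hbar=0$ decomposition of $BSk_\kappa$ is handled in the paper exactly as you suggest, by noting that the skein relation at $\hbar=0$ forgets crossings.
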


\begin{proof}
    Let $\kappa \geq 1$ and $\hbar=0$. $BSk_{\kappa}(\Sigma,\bm{q},\bm{\alpha}, \ast)_c|_{\hbar=0}$ no longer keeps track of braid crossings and thus is generated over $\Z[c^{\pm 1}]$ by elements of the form $\otimes_{i=1}^\kappa H_0(\Omega(\Sigma,q_i,\alpha_{\rho(i)}))_c = \otimes_{i=1}^\kappa BSk_1(\Sigma,q_i,\alpha_{\rho(i)},\ast)_c$, where $\rho \in S_{\kappa}$. Each homotopy class of paths in $H_0(\Omega(\Sigma,q_i,\alpha_{\rho(i)}))_c$ contains a unique $V$-perturbed geodesic from $q_i$ to $\alpha_{\rho(i)}$. Applying the Fenchel duality in Section \ref{sec:duallagrangian} gives us a bijection between $V$-perturbed geodesics and their dual Hamiltonian chords. By Proposition \ref{Proposition 4.10}, this bijection is given by $\mathcal{F}_{0,\kappa=1}$ after identifying generators of $CW(\Sigma,q_i,\alpha_{\rho(i)})$ with Hamiltonian chords from $q_i$ to $\alpha_{\rho(i)}$. 

    Since $\hbar = 0$, the only maps $u$ contributing to $\mathcal{F}_0$ are such that $\chi(u) = \kappa$. The domain is then $\kappa$ pseudoholomorphic disks, each of which is counted in the map $\mathcal{F}_{0,\kappa=1}$. Given a generator $$\bm{x} = \{x_{1\rho(1)}, \dots, x_{\kappa \rho(\kappa)} \} \in CW(\sqcup_{i=1}^\kappa T_{q_i}^*\Sigma,\sqcup_{i=1}^\kappa N^* \alpha_i)_c,$$ where $x_{i\rho(i)} \in CF(\phi^1_{H_V}(T^*_{q_i}\Sigma),N^*\alpha_{\rho(i)})$, $$\mathcal{F}_0(\bm{x})= \otimes_{i=1}^\kappa \mathcal{F}_{0,\kappa=1}(x_{i\rho(i)}) = \otimes_{i=1}^\kappa [\mathcal{L}(x_{i\rho(i)})] \in \otimes_{i=1}^\kappa BSk_1(\Sigma,q_i,\alpha_{\rho(i)},\ast)_c.$$ 

We construct an inverse $$\mathcal{F}_0^{-1}: BSk_{\kappa}(\Sigma,\bm{q},\bm{\alpha}, \ast)_c|_{\hbar=0} \longrightarrow CW(\sqcup_{i=1}^\kappa T_{q_i}^*\Sigma,\sqcup_{i=1}^\kappa N^* \alpha_i)_c$$ 
given by $$\bm{\gamma} = \{\gamma_{1\rho(1)}, \dots,\gamma_{\kappa\rho(\kappa)}\} \mapsto \{\mathcal{F}_{0,\kappa=1}^{-1}(\gamma_{1\rho(1)}), \dots, \mathcal{F}_{0,\kappa=1}^{-1}(\gamma_{\kappa\rho(\kappa)})\}.$$
Thus $\mathcal{F}_0$ is an isomorphism.
\end{proof}

We use Lemma \ref{lem:hbar0} and reintroduce $\hbar$. It suffices to show that $\mathcal{F}$ is a bijection. We repeat the argument in \cite{honda2022higher}. 

\begin{proof}[Proof of Theorem \ref{thm:Fiso1}]
\hfill\\

    \textit{Injectivity of $\mathcal{F}$}:
    Suppose that there exists $\bm{a} \neq 0$ such that $\mathcal{F}(\bm{a}) = 0$. We can write $\bm{a} = \sum_{i\geq 0}\hbar^i \bm{a}_i$, where $\bm{a}_i \in CW(\sqcup_{i=1}^\kappa T_{q_i}^*\Sigma,\sqcup_{i=1}^\kappa N^* \alpha_i)_c|_{\hbar=0}$. Since the codomain $BSk_\kappa(\Sigma,\bm{q},\bm{\alpha}, \ast)$ has no $\hbar$-torsion, it follows that $\bm{a}_0 \neq 0$. Then setting $\hbar = 0$, we have $\mathcal{F}(\bm{a}_0) =\mathcal{F}(\bm{a}) = 0$. This implies that $\mathcal{F}_0(\bm{a}_0) = 0$, and thus $\bm{a}_0=0$, a contradiction. Therefore, $\mathcal{F}$ is injective. \\

    \textit{Surjectivity of $\mathcal{F}$}: Let $\bm{b} \in BSk_\kappa(\Sigma,\bm{q},\bm{\alpha}, \ast)$. By Lemma \ref{lem:hbar0}, there exists $\bm{a}_0$ such that $\mathcal{F}(\bm{a}_0) \equiv \bm{b} \text{ (mod }  \hbar)$. Let 
    $$\bm{b_1} = \frac{\bm{b}-\mathcal{F}(\bm{a}_0)}{\hbar}|_{\hbar=0}.$$
    Then there exists an $\bm{a}_1$ such that $\mathcal{F}(\bm{a}_1) \equiv \bm{b}_1 \text{ (mod } \hbar)$. Repeating this procedure, we get $\mathcal{F}(\sum_{i\geq 0}\hbar^i \bm{a}_i) = \bm{b}$. Thus $\mathcal{F}$ is surjective. 
\end{proof}

\section{Geometric realization of $CW(\sqcup_{i=1}^\kappa T_{q_i}^*\Sigma,\sqcup_{i=1}^\kappa N^* \alpha_i)_c$ as a module over the braid skein algebra}
\subsection{Algebraic action}
\hfill\\
\label{sec:algaction}We describe the (right) braid skein-module structure on $BSk_\kappa(\Sigma,\bm{q},\bm{\alpha}, \ast)$. Specifically, we give the action of the braid skein algebra $BSk_\kappa(\Sigma, \bm{q}, \ast)$ on our proposed module $BSk_\kappa(\Sigma,\bm{q},\bm{\alpha}, \ast)$. 

Let $[\gamma_1]$ be an element of the braid skein algebra $BSk_\kappa(\Sigma, \bm{q}, \ast)$ and $[\gamma_2]$ be an element of $BSk_\kappa(\Sigma,\bm{q},\bm{\alpha}, \ast)$. Then $[\gamma_1]$ and $ [\gamma_2]$ represent $c$-deformed homotopy classes of paths in $\Omega(\text{UConf}_\kappa((\Sigma \setminus \{ \ast \}), \bm{q})$ and $\Omega(\text{UConf}_\kappa((\Sigma \setminus \{ \ast \}), \bm{q}, \bm{\alpha})$, respectively. Suppose $\gamma_i$ is a representative for $[\gamma_i]$, where $\gamma_i$ is an element of $\Omega(\text{UConf}_\kappa((\Sigma \setminus \ast)) \otimes \Z[c^{\pm 1}]  \otimes \Z[[\hbar]]$ satisfying the appropriate boundary conditions for the configuration space. 

\begin{proposition}
       $BSk_\kappa(\Sigma, \bm{q}, \ast)$ acts on $BSk_\kappa(\Sigma,\bm{q},\bm{\alpha}, \ast)$ by the map 
       $$\rho: BSk_\kappa(\Sigma,\bm{q},\bm{\alpha}, \ast) \otimes BSk_\kappa(\Sigma, \bm{q}, \ast)   \longrightarrow BSk_\kappa(\Sigma,\bm{q},\bm{\alpha}, \ast) $$

\noindent given by 
   $$\rho([\gamma_2]_c,[\gamma_1]_c) \mapsto [\gamma_1 \gamma_2]_c.$$ 

   \begin{proof}
       This follows directly from the product defined on the braid skein algebra. 
   \end{proof}

\end{proposition}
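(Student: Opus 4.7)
The plan is to verify that the concatenation-of-braids formula $\rho([\gamma_2]_c,[\gamma_1]_c) = [\gamma_1 \gamma_2]_c$ is well-defined on representatives and satisfies the axioms of a right module action. First I would fix representative paths $\gamma_1 \in \Omega(\mathrm{UConf}_\kappa(\Sigma \setminus \{\ast\}), \bm{q})$ and $\gamma_2 \in \Omega(\mathrm{UConf}_\kappa(\Sigma \setminus \{\ast\}), \bm{q}, \bm{\alpha})$ and note that, because $\gamma_1$ ends at $\bm{q}$ and $\gamma_2$ starts at $\bm{q}$, the concatenation $\gamma_1 \gamma_2$ (after reparametrization) is a genuine path in $\Omega(\mathrm{UConf}_\kappa(\Sigma \setminus \{\ast\}), \bm{q}, \bm{\alpha})$. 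Viewing paths as braids in $\Sigma \times [0,1]$, this is exactly the vertical stacking of braids that underlies the product on $BSk_\kappa(\Sigma,\bm{q},\ast)$.

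Next I would check well-definedness on $c$-deformed homotopy classes modulo HOMFLY. If $\gamma_1'$ is another representative of $[\gamma_1]_c$, there is a homotopy $H_1$ from $\gamma_1$ to $\gamma_1'$ (of paths based at $\bm{q}$ in both endpoints) with algebraic intersection $\langle H_1, \ast \rangle = n_1$, and similarly for $\gamma_2$ with $\langle H_2, \ast \rangle = n_2$. Concatenating $H_1$ and $H_2$ (with the reparametrization moving their meeting time continuously) produces a homotopy from $\gamma_1\gamma_2$ to $\gamma_1'\gamma_2'$ whose algebraic intersection number with $\ast$ is $n_1 + n_2$ by additivity of intersection number. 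Hence $[\gamma_1 \gamma_2]_c = c^{2(n_1+n_2)}[\gamma_1'\gamma_2']_c$, matching the $c$-deformation on either side. Well-definedness modulo the HOMFLY skein relation is immediate from its locality: any skein-triple of braids in $\gamma_1$ (resp.\ $\gamma_2$) remains a skein-triple after stacking with $\gamma_2$ (resp.\ $\gamma_1$), since the two crossings involved sit in a small $3$-ball disjoint from the rest of the concatenation.

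Finally I would verify the module axioms. Associativity, $\rho(\rho([\gamma_2]_c,[\gamma_1']_c),[\gamma_1]_c) = \rho([\gamma_2]_c, [\gamma_1]_c \cdot [\gamma_1']_c)$, reduces to the identity $\gamma_1(\gamma_1'\gamma_2) = (\gamma_1\gamma_1')\gamma_2$ of reparametrized paths, which holds up to the standard homotopy that reshuffles concatenation parameters; this homotopy is supported in an arbitrarily small neighborhood of the splicing times and hence is trivial with respect to $\ast$. The unit axiom is the observation that the constant braid at $\bm{q}$ acts trivially. Putting everything together gives the asserted right action.

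I do not expect a genuine obstacle: every step is a bookkeeping check in the same spirit as the construction of the braid skein algebra product itself. The only point that requires a sentence of care is the compatibility of the $c$-deformed homotopy relation with concatenation, which rests on the additivity of $\langle H, \ast \rangle$ under stacking of homotopies.
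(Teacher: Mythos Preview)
Your proposal is correct and is essentially an unpacking of the paper's one-line proof, which simply observes that the claim follows directly from the product on the braid skein algebra. You spell out the well-definedness and module axioms that are implicit in that remark, but the underlying argument is the same concatenation-of-braids construction.
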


\noindent \textit{Notation}: We will denote the product as a left multiplication to be more compatible with the composition of braids. Specifically, given $\gamma_1 \in BSk_\kappa(\Sigma, \bm{q}, \ast),\  \gamma_2 \in  BSk_\kappa(\Sigma,\bm{q},\bm{\alpha}, \ast)$, we write $\gamma_1 \gamma_2 := \rho(\gamma_2,\gamma_1)$.

\subsection{Geometric action}
\label{sec:geoaction}
\hfill\\
We make use of the bijection between elements in wrapped HDHF and $\kappa$-tuples of perturbed geodesics. Consider the Hamiltonian $H_V(t,q,p) = \frac{1}{2}|p|^2 + V(t,q)$ and its Fenchel dual $L_V(t,q,v)= \frac{1}{2}|v|^2 - V(t,q)$. Then, as in Section \ref{sec:pathspaceandwrappedhdhf}, we consider the Morse function $\mathcal{A}_V$ on the path space $\Omega^{1,2}(\Sigma,q,q')$ defined by $$\mathcal{A}_V(\gamma) = \int_0^1 L_V(t,\gamma(t), \dot{\gamma}(t)) dt.$$  We generalize this to $\kappa$ strands by considering the path spaces
$$\Omega_\rho(\Sigma, \bm{q}, \bm{q}):=\prod_{i=1}^\kappa \Omega(\Sigma,q_i,q_{\rho(i)}),\ \ \  \Omega^{1,2}_\rho(\Sigma, \bm{q}, \bm{q}):=\prod_{i=1}^\kappa \Omega^{1,2}(\Sigma,q_i,q_{\rho(i)}),$$
$$\Omega^{1,2}(\Sigma, \bm{q}, \bm{q}):=\bigsqcup_{\rho \in S_\kappa}\Omega^{1,2}_\rho(\Sigma, \bm{q}, \bm{q}),$$
where $\rho \in S_\kappa$ is a permutation. Then given $\bm{\gamma} \in \Omega^{1,2}_\rho(\Sigma, \bm{q}, \bm{q})$, we define 
$$\mathcal{A}_V(\bm{\gamma}) = \sum_i \mathcal{A}_V(\gamma_i).$$

For generic $V$, the action functional $\mathcal{A}_V$ on $\Omega^{1,2}_\rho(\Sigma, \bm{q}, \bm{q})$ is a Morse function which satisfies the Palais-Smale condition. 

The critical points of $\mathcal{A}_V$ are exactly the $\kappa$-tuples of perturbed geodesics which are in bijection with the elements of our wrapped HDHF. 

We define the path space $\Omega^{1,2}(\Sigma,\bm{q},\bm{\alpha})$ in a similar manner, the only difference being the end points of the paths are confined to $\alpha_i$ instead of $q'_i$.

Let $\bm{y} \in CW( \sqcup_{i=1}^\kappa T^*_{q_i} \Sigma)$ and $\bm{x} \in CW(\sqcup_{i=1}^\kappa T^*_{q_i} \Sigma, \sqcup_{i=1}^\kappa N^* \alpha_i)$. We want to define the product $\bm{y} \cdot \bm{x}$. On one hand, this has already been done in the framework of HDHF (\cite{colin2020applications}), where a $\mu^2$ map gives us the product $\bm{y} \cdot \bm{x}=\mu^2(\bm{x},\bm{y}) \in CW(\sqcup_{i=1}^\kappa T^*_{q_i} \Sigma, \sqcup_{i=1}^\kappa N^* \alpha_i)$. We aim to give a more geometrically intuitive interpretation of the product using the bijection with perturbed geodesics, making it more compatible with the algebraic action given in Section \ref{sec:algaction}.

We define a similar Morse function where the endpoint of the curve is allowed to move along our curves $\alpha_1,\dots,\alpha_\kappa$. With this setup, the geometric action of $CW(\sqcup_{i=1}^\kappa T^*_{q_i}\Sigma)_c$ on $CW(\sqcup_{i=1}^\kappa T_{q_i}^*\Sigma,\sqcup_{i=1}^\kappa N^* \alpha_i)_c$ is given by translating elements to $V$-perturbed geodesics viewed as paths, concatenating the paths, then performing the Morse gradient flow discussed above. The result is a $V$-perturbed geodesic from $\sqcup_{i=1}^\kappa q_i$ to $\sqcup_{i=1}^\kappa \alpha_i$, which is then identified with an element of $CW(\sqcup_{i=1}^\kappa T_{q_i}^*\Sigma,\sqcup_{i=1}^\kappa N^* \alpha_i)_c$. 

When performing the Morse gradient flow after concatenation, we must consider passings through the marked point as well as any crossings of the perturbed geodesics. Similarly as before, to account for the marked point, we have the homotopy $H$ from the concatenated curves $\gamma_1 \gamma_2$ to the resulting geodesic $\gamma$ and we impose on this the $c$-deformed homotopy relation $[\gamma] = c^{2\langle H,\ast \rangle}[\gamma_1 \gamma_2]$. 
We account for any crossings that occur on $\Sigma$ during our flow by applying the HOMFLY skein relation at all crossings. We can view each $\kappa$-tuple of perturbed geodesics as a braid in $[0,1] \times \Sigma$ by mapping $\gamma(t) \mapsto (t,\gamma(t))$. Then each crossing will either be a positive crossing $\sigma_i$ or a negative crossing $\sigma_i^{-1}$, and our HOMFLY skein relation is $\sigma_i - \sigma_i^{-1} = \hbar e$, where $e$ is the resolution of the crossing. We call any time a positive crossing changes to a negative crossing (or vice versa) a switching. In the Morse theory view, this results in a bifurcated trajectory; one trajectory is a continuation of the switching and the other is a resolution of the crossing with a factor of $\hbar$. As a braid, we get a sum of the same braid with the crossing reversed and a resolved crossing with an extra $\hbar$ factor. Using this relation, we can resolve all of the crossings into $\Z[\hbar,c^{\pm 1}]$-linear combinations of braids which are tuples of $V$-perturbed geodesics.

\subsection{Equivalence of DAHA modules}
\hfill\\
Our main goal of this section is to show that the following diagram commutes:
\begin{equation}
\begin{tikzcd}
\label{eq:diagram}
 CW(\sqcup_i T^*_{q_i} \Sigma, \sqcup_i N^* \alpha_i)_c \otimes CW(\sqcup_i T^*_{q_i} \Sigma)_c \arrow[rr,"\mathcal{F}_1 \otimes \mathcal{F}_2"] \arrow[d, "\mu^2"]
&& BSk_\kappa(\Sigma,\bm{q},\bm{\alpha}, \ast) \otimes (H_\kappa(\Sigma, \bm{q}) \otimes \Z[[\hbar]]) \arrow[d, "\rho" ] \\
CW(\sqcup_i T^*_{q_i} \Sigma, \sqcup_i N^* \alpha_i)_c \arrow[rr, "\mathcal{F}_1"]
&&  BSk_\kappa(\Sigma,\bm{q},\bm{\alpha}, \ast)
\end{tikzcd}\end{equation}

We recall each map of the diagram:

\noindent The top maps $$\mathcal{F}_1: CW(\sqcup_{i=1}^\kappa T^*_{q_i} \Sigma, \sqcup_{i=1}^\kappa N^* \alpha_i)_c \longrightarrow BSk_\kappa(\Sigma,\bm{q},\bm{\alpha}, \ast)$$ and $$\mathcal{F}_2: CW(\sqcup_{i=1}^\kappa T^*_{q_i} \Sigma)_c \longrightarrow H_\kappa(\Sigma, \bm{q}) \otimes \Z[[\hbar]] $$
are the isomorphisms defined in Sections \ref{sec:parameterc} and \ref{sec:HeckeHDHF}, respectively. 

\noindent The left map $$\mu^2: CW(\sqcup_{i=1}^\kappa T^*_{q_i} \Sigma, \sqcup_{i=1}^\kappa N^* \alpha_i)_c \otimes CW(\sqcup_{i=1}^\kappa T^*_{q_i} \Sigma)_c  \longrightarrow CW(\sqcup_{i=1}^\kappa T^*_{q_i} \Sigma, \sqcup_{i=1}^\kappa N^* \alpha_i)_c$$ is the $A_{\infty}$-map defined by Equation \eqref{highermaps} with $m=2$. It is given by a count of holomorphic maps which project onto a thrice-punctured disk satisfying the usual boundary conditions; see \cite{colin2020applications} for more details.

\noindent The right map $$\rho: BSk_\kappa(\Sigma,\bm{q},\bm{\alpha}, \ast) \otimes (H_\kappa(\Sigma, \bm{q}) \otimes \Z[[\hbar]]) \longrightarrow BSk_\kappa(\Sigma,\bm{q},\bm{\alpha}, \ast) $$
is given in Section \ref{sec:algaction} after identifying $H_\kappa(\Sigma, \bm{q}) \otimes \Z[[\hbar]]$ with $BSk_\kappa(\Sigma, \bm{q}, \ast)$ . 

\noindent The bottom map is again the evaluation map from Section \ref{sec:parameterc}. 
\begin{lemma}
Diagram 5.1 commutes. 
\end{lemma}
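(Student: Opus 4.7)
The plan is to adapt the argument from \cite{honda2022higher} proving that $\mathcal{F}_2$ is an algebra homomorphism to the present module-action setting. I consider the moduli space $\mathcal{N}$ of $J$-holomorphic curves whose $A_\infty$-base is a 4-punctured disk $D_3$, with counterclockwise boundary arcs $\partial_0, \partial_1, \partial_2, \partial_3$ labeled by the Lagrangians $\phi^2_{H_V}(T^*_{\bm{q}}\Sigma)$, $\phi^1_{H_V}(T^*_{\bm{q}}\Sigma)$, $N^*\bm{\alpha}$, and $\Sigma$ respectively, where the doubly-wrapped fiber arises from the usual rescaling that allows $\mu^2$ on wrapped HDHF to be computed. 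The four positive boundary punctures $p_0, p_1, p_2, p_3$ carry the asymptotic data $\bm{q}'' \in \phi^2_{H_V}(T^*_{\bm{q}}\Sigma) \cap \Sigma$, $\bm{y}$ (an input from $CW(\sqcup_i T^*_{q_i}\Sigma)_c$), $\bm{x}$ (an input from $CW(\sqcup_i T^*_{q_i}\Sigma, \sqcup_i N^*\alpha_i)_c$), and $\bm{x}_{\mathrm{out}} \in N^*\bm{\alpha} \cap \Sigma$ (a bottom Morse--Bott generator). As in Section \ref{sec:initialT1}, parametrizing $\partial_3$ along the zero section produces a path in $\mathrm{UConf}_\kappa(\Sigma)$ for each curve.

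Since the conformal moduli of $D_3$ is one-dimensional, for a generic consistent almost complex structure the moduli $\mathcal{N}$ with a fixed Euler characteristic $\chi$ is one-dimensional. Its Gromov compactification $\overline{\mathcal{N}}$ has two codimension-one boundary strata, corresponding to the two contiguous $2{+}2$ splittings of the cyclically ordered punctures. At one end of the interval, the base disk splits into components carrying $\{p_1, p_2\}$ and $\{p_3, p_0\}$: the first is a $\mu^2$-type 3-disk with boundary arcs $\phi^2_{H_V}(T^*_{\bm{q}}\Sigma), \phi^1_{H_V}(T^*_{\bm{q}}\Sigma), N^*\bm{\alpha}$ and asymptotics $\bm{y}, \bm{x}, \mu^2(\bm{x},\bm{y})$; the second is an $\mathcal{F}_1$-type 3-disk with arcs $\phi^2_{H_V}(T^*_{\bm{q}}\Sigma), N^*\bm{\alpha}, \Sigma$ and asymptotics $\mu^2(\bm{x},\bm{y}), \bm{x}_{\mathrm{out}}, \bm{q}''$. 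Summation over this stratum yields $\mathcal{F}_1(\mu^2(\bm{x}, \bm{y}))$. At the other end, the base splits into $\{p_0, p_1\}$ and $\{p_2, p_3\}$ components: the first is an $\mathcal{F}_2$-type 3-disk (boundary arcs $\phi^2_{H_V}(T^*_{\bm{q}}\Sigma), \phi^1_{H_V}(T^*_{\bm{q}}\Sigma), \Sigma$; asymptotics $\bm{q}'', \bm{y}, \bm{q}'$ for some $\bm{q}' \in \phi^1_{H_V}(T^*_{\bm{q}}\Sigma) \cap \Sigma$), contributing $\mathcal{F}_2(\bm{y})$; the second is an $\mathcal{F}_1$-type 3-disk (boundary arcs $\phi^1_{H_V}(T^*_{\bm{q}}\Sigma), N^*\bm{\alpha}, \Sigma$; asymptotics $\bm{q}', \bm{x}, \bm{x}_{\mathrm{out}}$), contributing $\mathcal{F}_1(\bm{x})$. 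The two $\Sigma$-parametrizations concatenate to the original parametrization of $\partial_3$, so this stratum contributes $\rho(\mathcal{F}_1(\bm{x}), \mathcal{F}_2(\bm{y})) = \mathcal{F}_2(\bm{y}) \cdot \mathcal{F}_1(\bm{x})$.

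The remaining potential boundary strata come from strip-breaking at a single puncture, governed by the Floer differential $\mu^1$. These vanish because every generator in play lies in degree zero (Corollary 3.5 above, together with the analogous statements for $\bm{q}'', \bm{q}', \bm{x}_{\mathrm{out}}$ from \cite{honda2022higher}), so $\mu^1 \equiv 0$. Holomorphic disk or sphere bubbling is ruled out by exactness of $T^*\Sigma$ and of the Lagrangians under consideration. The signed count of $\partial \overline{\mathcal{N}}$ thus gives $\mathcal{F}_1(\mu^2(\bm{x}, \bm{y})) = \rho(\mathcal{F}_1(\bm{x}), \mathcal{F}_2(\bm{y}))$, which is commutativity of Diagram \eqref{eq:diagram}.

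The main obstacle is the careful analysis of the splitting stratum: one must verify that (a) the parametrization of $\partial_3$ on the degenerate configuration really concatenates to the braid-skein product of the two pieces in the sense of $\rho$, (b) the weights $c^{2\langle u, \ast \rangle}$ and $\hbar^{\kappa - \chi(u)}$ multiply correctly across the node via additivity of algebraic intersection numbers with $\ast$ and of $\kappa - \chi$, and (c) the HOMFLY skein and $c$-deformed homotopy relations are respected under the cutting-and-gluing procedure. These are direct analogues of the checks carried out for $\mathcal{F}_2$ in \cite{honda2022higher}, adapted to accommodate the conormal boundary condition appearing on one of the arcs.
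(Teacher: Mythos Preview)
Your overall strategy matches the paper's exactly: set up the index-1 moduli space over the $A_\infty$-base $T_2=D_3$ with the four boundary Lagrangians you list, and read off the identity $\mathcal{F}_1(\mu^2(\bm{x},\bm{y}))=\rho(\mathcal{F}_1(\bm{x}),\mathcal{F}_2(\bm{y}))$ from the codimension-1 boundary. The two splittings you describe are precisely the paper's degeneration types (1) and (2).

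There is, however, a genuine omission. In HDHF the domain $\dot F$ is a $\kappa$-fold branched cover of the base disk, and along the arc $\partial_3$ mapping to the zero section $\Sigma$ there is a further codimension-1 phenomenon: two of the $\kappa$ boundary points lying over the same point of $\partial_3 T_2$ can collide in $\Sigma$, producing a \emph{nodal degeneration along $\Sigma$}. This is the paper's type (3), $\partial_n\overline{\mathcal{H}}^{\mathrm{ind}=1,\chi}(\bm{q}'',\bm{y}',\bm{y},\bm{x})$. It is not ruled out by exactness --- no disk or sphere is bubbling off; rather the domain acquires a boundary node and the smooth part jumps to Euler characteristic $\chi+1$. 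Your list of ``remaining potential boundary strata'' (strip-breaking plus disk/sphere bubbles) therefore misses exactly this stratum.

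The paper handles it by invoking Proposition~6.5 of \cite{honda2022higher}: the contributions of these nodal degenerations cancel in pairs when one sums over all Euler characteristics, essentially because the two local resolutions of the node differ by the HOMFLY skein relation already imposed on the target $BSk_\kappa(\Sigma,\bm{q},\bm{\alpha},\ast)$. Your final paragraph gestures at ``the HOMFLY skein relation being respected under cutting-and-gluing,'' but that is a different check (compatibility of the evaluation map across the two main splittings). You need a separate argument that the nodal strata along $\Sigma$ contribute zero; without it the count of $\partial\overline{\mathcal{N}}$ does not reduce to the two terms you want.
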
Before we prove the lemma, we introduce a moduli space of holomorphic curves similar to that of \cite[Section 6.2]{honda2022higher}.

Let $T_2 := D_3$ be our $A_\infty$-base where $\partial_iT_2 = \partial_i D_3$. Let $\mathcal{T}_2$ be the moduli space of $T_2$ modulo automorphisms, and choose representatives $T_2$ of equivalence classes in a smooth manner. Let $\pi_{T^*\Sigma}$ be the projection $T_2 \times T^*\Sigma \to T^*\Sigma$ and choose a sufficiently generic consistent collection of compatible almost complex structures such that they are close to a split almost complex structure projecting holomorphically to $T_2$, as in Section \ref{sec:HDHFreview}. Perturb the 0-section near the $\alpha_i$ and let $\bm{x}$ be the tuple of intersections $\alpha_i \cap \phi_{H'}(\Sigma)$ corresponding to the bottom generators, where $\phi_{H'}(\Sigma)$ is the perturbed 0-section. 
We denote by $\mathcal{H}(\bm{q}'',\bm{y}',\bm{y},\bm{x})$ the moduli space of maps $$u: (\dot{F}, j) \longrightarrow (T_2 \times T^*\Sigma, J_{T_2}),$$ where $(F,j)$ is a compact Riemann surface with boundary, $\bm{p}_0$, $\bm{p}_1$, $\bm{p}_2$, $\bm{p}_3$ are disjoint tuples of boundary punctures of $F$ and $\dot{F} = F \setminus \cup_i \  \bm{p}_i$, satisfying:

  \[
    \begin{cases}
    du \circ j = J_{T_2} \circ du;\\

    \pi_{T^*\Sigma} \circ u(z) \in \phi^2_{H_V}(\sqcup_{i=1}^\kappa T^*_{q_i}\Sigma)\text{ if } \pi_{T_2} \circ u(z) \subset \partial_0 T_2;\\
     \text{each component of } \partial \dot{F} \text{ that projects to }\partial_0T_2  \text{ maps to a distinct } \phi^2_{H_V}(T^*_{q_i}\Sigma);\\
    
   \pi_{T^*\Sigma} \circ u(z) \in \phi^1_{H_V}(\sqcup_{i=1}^\kappa T^*_{q_i}\Sigma) \text{ if } \pi_{T_2} \circ u(z) \subset \partial_1 T_2;\\
    \text{each component of } \partial \dot{F} \text{ that projects to }\partial_1T_2 \text{ maps to a distinct } \phi^1_{H_V}(T^*_{q_i}\Sigma);\\

   \pi_{T^*\Sigma} \circ u(z) \in \sqcup_{i=1}^\kappa N^*\alpha_i \text{ if } \pi_{T_2} \circ u(z) \subset \partial_2 T_2;\\
     \text{each component of } \partial \dot{F} \text{ that projects to } \partial_2 T_2 \text{ maps to a distinct }N^*\alpha_i;\\

   \pi_{T^*\Sigma} \circ u(z) \in \Sigma \times \{0\} \subset T^*\Sigma \text{ if } \pi_{T_2} \circ u(z) \subset \partial_3 T_2;\\

    \pi_{T^*\Sigma} \circ u \text{ tends to }\bm{q}'',\ \bm{y}',\  \bm{y},\  \bm{x}\text{ as }s_0,s_1 ,s_2,s_3 \to +\infty;\\

    \pi_{T_1} \circ u\text{ is a }\kappa\text{-fold branched cover of a fixed }T_2\in \mathcal{T}_2.\\
    
    \end{cases}
    \]

In simpler terms, we look at the moduli space of holomorphic curves between the Lagrangians involved and the zero section of $T^* \Sigma$ in the framework of HDHF.

\begin{figure}[h]
    \centering
    \includegraphics[width=0.6\textwidth]{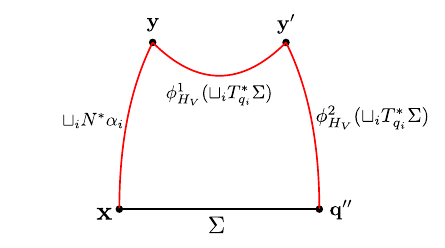}
    \caption{The $A_\infty$-base $T_2$ with boundary conditions}
    
    \label{fig:t2}
\end{figure}

\begin{lemma}
    There exists a sufficiently generic consistent collection of almost complex structures such that the moduli space $\mathcal{H}(\bm{q}'',\bm{y}',\bm{y},\bm{x})$ is of dimension 1 and is transversely cut out for all $\bm{x},\ \bm{y},\ \bm{y}'$ and $\bm{q''}$. Moreover, $\mathcal{H}(\bm{q}'',\bm{y}',\bm{y},\bm{x})$ admits a compactification $\overline{\mathcal{H}}(\bm{q}'',\bm{y}',\bm{y},\bm{x})$ such that its boundary $\partial \overline{\mathcal{H}}(\bm{q}'',\bm{y}',\bm{y},\bm{x})$ is of dimension 0 and contains discrete broken or nodal curves. 
\end{lemma}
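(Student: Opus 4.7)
The plan is to follow the standard Floer-theoretic template: index computation, transversality by generic perturbation of the almost complex structure, and SFT-style Gromov compactness, then identify the boundary strata.

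For the dimension, I would apply an adapted version of the index formula from Theorem \ref{indextheorem} to the all-positive configuration on $T_2 = D_3$. Since $T^*\Sigma$ has complex dimension $n=2$, the $(n-2)\chi$ term vanishes. By Corollary 3.5 and the analogous grading calculations for cotangent-fiber intersections (referenced to \cite{honda2022higher}) and for the bottom generators of the Morse--Bott intersections $N^*\alpha_i \cap (\Sigma \times \{0\})$, each of $|\bm{q}''|,|\bm{y}'|,|\bm{y}|,|\bm{x}|$ equals $0$, so the Maslov contribution vanishes. The moduli of $T_2 = D_3$ modulo automorphisms has real dimension $1$, which supplies the single extra parameter and yields $\mathrm{ind}(u)=1$.

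Transversality is then obtained by the usual argument: in the space of consistent collections of almost complex structures $J_{T_2}$ on $T_2 \times T^*\Sigma$ that are close to a split structure and project holomorphically onto $T_2$, the set of $J_{T_2}$ making all moduli spaces $\mathcal{H}(\bm{q}'',\bm{y}',\bm{y},\bm{x})$ transversely cut out is comeager. This is the same argument used in \cite{colin2020applications} and in Section \ref{sec:HDHFreview}, and only needs to be rerun with the new boundary conditions; no new feature appears.

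For compactness, I would invoke SFT-style Gromov compactness. The action bounds are controlled by the quadratic growth conditions (H1) and (H2) on $H_V$, which, together with the conormal/cotangent boundary conditions, prevent escape to infinity in the fiber direction (the arguments of Abbondandolo--Portaluri--Schwarz in \cite{abbondandolo2008floer} cited in Section 2.2 apply here). Bubbling of interior holomorphic spheres or boundary disks is excluded by exactness of the Lagrangians together with standard dimension counting, since $\bm{x}$ is chosen to be the bottom generator. It follows that $\overline{\mathcal{H}}(\bm{q}'',\bm{y}',\bm{y},\bm{x})$ is a compact $1$-dimensional manifold with boundary.

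The final step, which I expect to be the main obstacle, is the enumeration of the codimension-one boundary strata. These should be precisely the ones needed to prove that Diagram \eqref{eq:diagram} commutes, namely:
\begin{itemize}
    \item Strip-breaking at the ends $s_0$, $s_1$, $s_2$, producing a Floer strip concatenated with a curve over $T_2$; the resulting identifications correspond to the differential on each factor and vanish upon counting once one fixes $\bm{y}',\bm{y}$ and restricts to index-zero strips.
    \item Degeneration of the base $T_2 \in \mathcal{T}_2$ as one moves to the boundary of its $1$-dimensional moduli: $T_2$ splits along a node into two $T_1$-type components, with the node separating either $\{\partial_0,\partial_1\}$ from $\{\partial_2,\partial_3\}$ (yielding $\mathcal{F}_1(\mu^2(\bm{x},\bm{y}))$ via the map from Section \ref{sec:initialT1}) or $\{\partial_1,\partial_2\}$ from $\{\partial_0,\partial_3\}$ (yielding $\rho(\mathcal{F}_1(\bm{x}),\mathcal{F}_2(\bm{y}))$ after identifying the concatenation of braids on the zero section with the algebraic action in Section \ref{sec:algaction}).
\end{itemize}
The hard part is ruling out any additional degenerations, in particular verifying that the branched covering structure $\pi_{T_2} \circ u$ remains $\kappa$-fold across the degeneration and that all limiting curves obtained are, up to the $c$-deformed homotopy and HOMFLY skein relations, of the above two types. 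Once this classification is complete, the fact that $\partial \overline{\mathcal{H}} $ has signed count zero will give the identity $\mathcal{F}_1 \circ \mu^2 = \rho \circ (\mathcal{F}_1 \otimes \mathcal{F}_2)$, i.e.\ the commutativity of Diagram \eqref{eq:diagram}.
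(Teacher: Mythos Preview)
Your argument for the lemma proper---index computation via Theorem~\ref{indextheorem} (adapted to the all-positive ends), generic transversality, and SFT/Gromov compactness with exactness ruling out sphere and disk bubbling---is correct and is essentially what underlies the paper's proof, which simply says ``This is identical to Lemma 6.4 in \cite{honda2022higher}.'' So for the statement as written you are in agreement with the paper, just more explicit.

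Where you diverge is in scope: your last two bullet points and the paragraph about ``ruling out any additional degenerations'' are not part of this lemma at all. The lemma only asserts that $\overline{\mathcal{H}}$ exists with a $0$-dimensional boundary consisting of broken or nodal curves; the \emph{classification} of those boundary strata and the conclusion that Diagram~\eqref{eq:diagram} commutes is the content of the separate proof of Lemma~5.2. In that later argument the paper finds \emph{three} types of codimension-one degenerations, and your list misses the third one: nodal degeneration along the zero section $\Sigma$ (the paper's type~(3), $\partial_n\overline{\mathcal{H}}^{\mathrm{ind}=1,\chi}$). These nodal curves do occur; they are not ruled out by exactness or dimension, and one must argue (as in \cite[Proposition~6.5]{honda2022higher}) that their total signed contribution over all Euler characteristics vanishes, via the HOMFLY skein relation. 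Your remark that ``Bubbling of \ldots\ boundary disks is excluded by exactness'' is too strong in this context: the relevant nodal pieces are not closed disks on a single Lagrangian but degenerations along the $\partial_3 T_2$ boundary into the zero section, and they survive. Also, your list of strip-breakings mentions ends $s_0,s_1,s_2$ but omits $s_3$. None of this affects the lemma under review, but if you intend to fold the proof of Lemma~5.2 into this one, those points need to be addressed.
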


\begin{proof}
This is identical to Lemma 6.4 in \cite{honda2022higher}. 
\end{proof}

As in Section \ref{sec:initialT1}, we define a map from $\mathcal{H}(\bm{q}'',\bm{y}',\bm{y},\bm{x}) \times [0,1] \longrightarrow (\Sigma)^\kappa$ by 
\begin{equation}
    \gamma(u)(t) = (\pi_{T^* \Sigma} \circ u) \circ (\pi_{\Sigma} \circ u)^{-1} \circ \tau(t),
\end{equation}
where $\tau:[0,1] \to \partial_3 T_2$ parametrizes the boundary arc from $p_0$ to $p_3$. 
Let $$\mathcal{H}_0(\bm{q}'',\bm{y}',\bm{y},\bm{x}) = \{u \in \mathcal{H}(\bm{q}'',\bm{y}',\bm{y},\bm{x}) ~|~ \gamma(u)(t) \in \text{UConf}_\kappa(\Sigma \setminus \{ \ast\}) \text{ for all } t\}.$$

As before, we define the evaluation map 
$$\mathcal{G}: \mathcal{H}_0(\bm{q'',y',y,x}) \longrightarrow BSk_\kappa(\Sigma,\bm{q},\bm{\alpha}, \ast),$$
$$u \mapsto (-1)^{\natural(u)} \cdot c^{2\langle u, \ast \rangle} \cdot  \hbar^{\kappa - \chi(u)} \cdot [\gamma(u)].$$

\begin{proof}[Proof of Lemma 5.2]
    We analyze the boundary of the index 1 moduli space $\overline{\mathcal{H}}(\bm{q}'',\bm{y}',\bm{y},\bm{x})$ by considering the possible degenerations. Let $\overline{\mathcal{H}}^\chi(\bm{q}'',\bm{y}',\bm{y},\bm{x})$ be the subset of $\overline{\mathcal{H}}(\bm{q}'',\bm{y}',\bm{y},\bm{x})$ consisting of maps with $\chi(u) = \chi$.
    For a generic $u$, $u \in \mathcal{H}_0(\bm{q}'',\bm{y}',\bm{y},\bm{x})$. However, for a 1-parameter family $u_t \in \mathcal{H}(\bm{q}'',\bm{y}',\bm{y},\bm{x})$, $\gamma(u_t)$ may intersect the marked point $\ast$ at some $t \in (0,1)$. However, since we are taking $c$-deformed homotopy classes, we are guaranteed that $\mathcal{G}(u_0) = \mathcal{G}(u_1)$. Hence we will not worry about intersections with $\ast$. 
    All codimension-1 degenerations occur in the $A_\infty$-base direction, giving us a nice characterization of the possible breakings.
    The three types of boundary degenerations are:
    \begin{enumerate}
        \item $\bigsqcup_{\bm{y}'',\chi' + \chi'' - \kappa = \chi}\mathcal{M}^{\text{ind}=0,\chi'}(\bm{y}', \ \bm{y},\ \bm{y}'') \times \mathcal{H}^{\text{ind}=0,\chi''}(\bm{q}'', \ \bm{y}'',\ \bm{x})$;
        \item $\bigsqcup_{\bm{q}',\chi' + \chi'' - \kappa = \chi}\mathcal{H}^{\text{ind}=0,\chi'}(\bm{q''}, \ \bm{y}',\ \bm{q}') \times \mathcal{H}^{\text{ind}=0,\chi''}(\bm{q}', \ \bm{y},\ \bm{x})$;
        \item the set $\partial_n \overline{\mathcal{H}}^{\text{ind}=1,\chi}(\bm{q}'',\bm{y}',\bm{y},\bm{x})$ with a nodal degeneration along $\Sigma$. 
    \end{enumerate}

The first type is shown on the left-hand side of Figure \ref{fig:T2degeneration} and contributes $\mathcal{F}_1(\mu^2(\bm{y,\ y'}))$.

The second type is shown on the right-hand side of Figure \ref{fig:T2degeneration} and contributes $\rho((\mathcal{F}_1 \otimes \mathcal{F}_2)(\bm{y}, \ \bm{y}'))$. 

In fact, all contributions to $\mathcal{F}_1(\mu^2(\bm{y},\ \bm{y}'))$ and $\rho((\mathcal{F}_1 \otimes \mathcal{F}_2)(\bm{y}, \ \bm{y}'))$ come from such degenerations. 

The proof of Proposition 6.5 in \cite{honda2022higher} shows that the total contribution of the third type over all Euler characteristics $\chi$ is zero. Hence it follows that $\mathcal{F}_1(\mu^2(\bm{y},\ \bm{y}')) = \rho((\mathcal{F}_1 \otimes \mathcal{F}_2)(\bm{y}, \ \bm{y}'))$ and so the diagram commutes.

\end{proof}

\begin{figure}[h]
    \centering
    \includegraphics[width=0.8\textwidth]{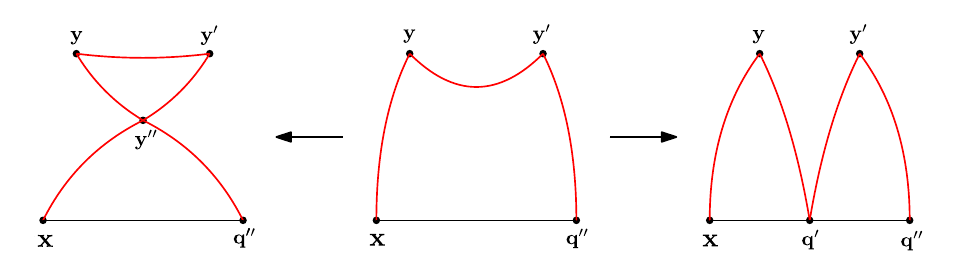}
    \caption{$T_2$ degenerations}
    
    \label{fig:T2degeneration}
\end{figure}

We have thus shown that the map $\mathcal{F}_1: CW(\sqcup_{i=1}^\kappa T^*_{q_i} \Sigma, \sqcup_{i=1}^\kappa N^* \alpha_i)_c \longrightarrow BSk_\kappa(\Sigma,\bm{q},\bm{\alpha}, \ast)$ realizes the wrapped HDHF of $\kappa$ cotangent fibers and $\kappa$ conormal bundles of simple closed curves as a DAHA-module. 

\section{The enhanced polynomial representation}
\label{sec:enhanced poly rep}
In this section, we specialize to $\Sigma = T^2$. We introduce the double affine Hecke algebra (DAHA) along with its polynomial representation. After fixing a configuration of points $q_1,\dots, q_\kappa$ and curves $\alpha_1, \dots, \alpha_\kappa$ in $T^2$, we define the enhanced polynomial representation of DAHA on $CW(\sqcup_{i=1}^\kappa T_{q_i}^*T^2,\sqcup_{i=1}^\kappa N^* \alpha_i)_c$ and prove Theorem \ref{thm:mainaction}.  

\subsection{Double affine Hecke algebra and its polynomial representation}
\label{sec:DAHApolyrep}
\hfill\\
We briefly review the DAHA and its skein-theoretic realization using braids in the punctured torus. For more details, refer to \cite{morton2021dahas}, where these results are proven and discussed at length.

Viewing $T^2$ as a square $I \times I$ with opposite sides identified, we choose $\ast = (\frac{1}{2},\frac{1}{2})$ and let the $\kappa$ points $q_1,\dots,q_\kappa$ line up in increasing fashion along the lower part of the diagonal from $(0,0)$ to $\ast$. We choose a convenient basis for the braids in $B_\kappa(T^2 \setminus \{ \ast \},\bm{q})$. Let $x_i \ (\text{respectively, } y_i)$ be the braid which consists of the point $q_i$ moving uniformly around the $(-1,0)\ (\text{respectively, } (0,1))$ curve. Let $\sigma_i$ for $1\leq i\leq \kappa-1$ be the braid which locally exchanges the strings from $q_i$ and $q_{i+1}$ in a counterclockwise direction when looking down onto $T^2$, as shown in Figure 7.1(B) below. 

\begin{figure}[h]
    \begin{subfigure}{.5\textwidth}
    \centering
    \includegraphics[width=1\textwidth]{Figures/xybraids.png}
    \caption{Braids $x_i$ and $y_i$ }
    \label{fig:xybraids}
    \end{subfigure}%
\begin{subfigure}{.5\textwidth}
    \centering
    \includegraphics[width=0.65\textwidth]{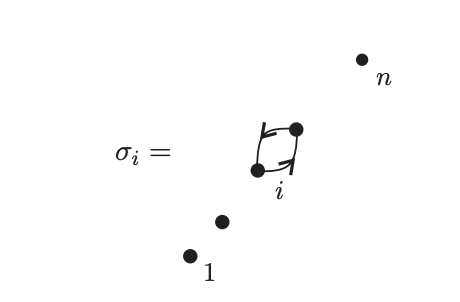}
    \caption{Braid $\sigma_i$}
    \label{fig:sigmabraid}
\end{subfigure}
\caption{Generators for the braid group on the punctured torus}

\end{figure}

The key element in this geometric realization of DAHA is the next theorem due to Morton and Samuelson.

\begin{theorem}[Morton-Samuelson]
\label{thm:DAHAiso}
The braid skein algebra $BSk_\kappa(T^2,\bm{q},\ast)$ is isomorphic to the double affine Hecke algebra $\ddot{H}_\kappa$.
    
\end{theorem}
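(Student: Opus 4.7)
The plan is to define a homomorphism $\Phi: \ddot{H}_\kappa \to BSk_\kappa(T^2, \bm{q}, \ast)$ on generators and check that the defining relations of DAHA hold in the braid skein algebra, then produce an inverse by showing that our chosen braids generate $BSk_\kappa(T^2, \bm{q}, \ast)$ as a $\Z[s^{\pm 1}, c^{\pm 1}]$-module.

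First I would recall a presentation of $\ddot{H}_\kappa$ in terms of generators $T_1, \dots, T_{\kappa-1}, X_1^{\pm 1}, \dots, X_\kappa^{\pm 1}, Y_1^{\pm 1}, \dots, Y_\kappa^{\pm 1}$ modulo the affine Hecke relations $(T_i - s)(T_i + s^{-1}) = 0$, the braid relations on the $T_i$'s, the commutation of $X_i$'s (and of $Y_i$'s) among themselves, the standard intertwining relations $T_i X_i T_i = X_{i+1}$ and $T_i^{-1} Y_i T_i^{-1} = Y_{i+1}$, and the Cherednik commutator relation $Y_1^{-1} X_1 Y_1 X_1^{-1} T_1^{-2} = q$ (or the equivalent formulation with the central element $c$). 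I would then define $\Phi$ by $T_i \mapsto \sigma_i$, $X_i \mapsto x_i$, $Y_i \mapsto y_i$, with the central parameter of DAHA sent to the marked-point parameter $c$ (suitably normalized).

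The verification of relations splits into three layers. The Hecke relation $(T_i - s)(T_i + s^{-1}) = 0$ is an immediate consequence of the HOMFLY skein relation applied to the crossing $\sigma_i$ together with the fact that the resolution $e$ acts as the identity on the relevant strands (so $\sigma_i - \sigma_i^{-1} = (s - s^{-1}) \cdot \mathrm{id}$ after normalization). The braid relations and the disjoint commutations follow from planar isotopy in $T^2 \setminus \{\ast\}$, as do the intertwining relations $T_i x_i T_i = x_{i+1}$ and $T_i^{-1} y_i T_i^{-1} = y_{i+1}$. The nontrivial relation is the Cherednik commutator: topologically, the braid $y_1 x_1 y_1^{-1} x_1^{-1}$ traces out the commutator of the meridian and longitude at the basepoint $q_1$, which is homotopic in $T^2$ (but not in $T^2 \setminus \{\ast\}$) to a small loop encircling $\ast$. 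Pulling this loop over $\ast$ picks up a factor of $c^{\pm 2}$ from the marked-point relation, and the remaining discrepancy $T_1^{-2}$ comes from the fact that the commutator braid, once pushed away from $\ast$, is homotopic to a full twist of the first two strands, which equals $\sigma_1^2$ up to the HOMFLY skein relation.

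To finish, I would produce a surjection in the other direction. Every $\kappa$-braid in $T^2 \setminus \{\ast\}$ can be put in a normal form consisting of a monomial in the $x_i$ and $y_i$ (in some fixed order, tracking the $c$-factors as strands pass over $\ast$) followed by a finite braid word in the $\sigma_i$. Surjectivity of $\Phi$ then reduces to writing an arbitrary word in $x_i, y_i, \sigma_i$ as a product matching a generating set of $\ddot{H}_\kappa$, and injectivity follows because the relations we imposed on $BSk_\kappa(T^2, \bm{q}, \ast)$ are precisely those appearing in Cherednik's presentation of $\ddot{H}_\kappa$, so the quotient map from the free group on generators to $\ddot{H}_\kappa$ factors through $\Phi$ bijectively. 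The main obstacle is the Cherednik commutator relation: carefully matching the topological commutator of meridian and longitude against the marked-point factor $c$ and the Hecke factor $T_1^{-2}$ requires a precise accounting of framings and signs, which is exactly where the work of Morton–Samuelson \cite{morton2021dahas} is invoked.
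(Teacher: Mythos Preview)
The paper does not give its own proof of this theorem: it is quoted as a result of Morton--Samuelson \cite{morton2021dahas} and immediately followed by the presentation in Theorem~\ref{DAHAbasis}, with no argument supplied. So there is nothing in the paper to compare your sketch against; what I can do is flag where your outline departs from the actual content of the Morton--Samuelson argument and from the relations recorded in the paper.

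Your treatment of the Cherednik commutator relation is where the sketch goes wrong. You write the relation as $Y_1^{-1}X_1Y_1X_1^{-1}T_1^{-2}=q$ and explain the $T_1^{-2}$ by saying the commutator braid is ``homotopic to a full twist of the first two strands.'' Compare with relation~(9) of Theorem~\ref{DAHAbasis}: the right-hand side is $c^2\sigma_1\sigma_2\cdots\sigma_{\kappa-1}\sigma_{\kappa-1}\cdots\sigma_2\sigma_1$, a full twist of the first strand around \emph{all} of the others, not just the second. Topologically this is because the commutator of meridian and longitude at $q_1$ bounds a disk in $T^2$ containing not only the puncture $\ast$ but also the remaining basepoints $q_2,\dots,q_\kappa$; shrinking that loop therefore drags strand~$1$ around every other strand, producing the long product of $\sigma_i$'s. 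Your two-strand picture only accounts for $\kappa=2$.

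Your injectivity argument is also circular as written. Saying that ``the relations we imposed on $BSk_\kappa(T^2,\bm{q},\ast)$ are precisely those appearing in Cherednik's presentation'' only shows that $\Phi$ is well defined and surjective; it does not rule out further relations holding in the skein quotient. In Morton--Samuelson the hard direction is establishing that a PBW-type spanning set (monomials in the $x_i$, monomials in the $y_i$, and a Hecke word) is linearly independent in the braid skein algebra, which is done by matching dimensions against the known basis of $\ddot H_\kappa$. Your normal-form paragraph gestures at the spanning part of this, but the independence step is the substance of the theorem and is absent from the sketch.
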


We now fix the presentation for the skein algebra, and therefore the DAHA, that we will be using.

\begin{theorem}
\label{DAHAbasis}
The double affine Hecke algebra $\ddot{H}_\kappa$ can be presented by the braids $\sigma_1,\dots,\sigma_{\kappa-1},x_1,y_1$
with relations:    
\begin{enumerate}
\item $\sigma_i\sigma_j = \sigma_j \sigma_i, \ |i-j|>1,$
\item $\sigma_i\sigma_{i+1}\sigma_i = \sigma_{i+1}\sigma_i\sigma_{i+1},$
\item $\sigma_ix_1 = x_1 \sigma_i, \ i>1,$
\item $\sigma_iy_1 = y_1\sigma_i, \ i>1,$
\item $x_1\sigma_1x_1\sigma_1 = \sigma_1x_1\sigma_1x_1,$
\item $y_1\sigma_1y_1\sigma_1 = \sigma_1y_1\sigma_1y_1,$
\item $x_1\sigma_1y_1\sigma_1^{-1} = \sigma_1y_1\sigma_1x_1,$
\item $(\sigma_1-s)(\sigma_1 + s^{-1}) = 0,$
\item $x_1^{-1}y_1x_1y_1^{-1} = c^2\sigma_1\sigma_2\cdot\cdot\cdot \sigma_{\kappa-1}\sigma_{\kappa-1}\cdot \cdot \cdot \sigma_2\sigma_1$.

\end{enumerate}

\end{theorem}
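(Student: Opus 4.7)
The plan is to leverage Morton and Samuelson's identification in Theorem \ref{thm:DAHAiso} and transport a known presentation of $\ddot{H}_\kappa$ into the braid-skein language, then verify each of the nine relations directly in $BSk_\kappa(T^2, \bm{q}, \ast)$ using braid isotopy in $T^2 \setminus \{\ast\}$, the HOMFLY skein relation, and the marked point relation. The identification is $\sigma_i \leftrightarrow T_i$ for the Hecke generators, $x_1 \leftrightarrow X_1$, and $y_1 \leftrightarrow Y_1$; the other $x_i$, $y_i$ arise as conjugates by the $\sigma_j$, so only $x_1$ and $y_1$ are needed as generators beyond the $\sigma_i$.

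First I would verify the relations that are purely braid-theoretic in the unpunctured part of $T^2$. Relations (1) and (2) are the standard braid relations and follow from local isotopy of strands away from both $\ast$ and the $(-1,0)$, $(0,1)$ loops. Relations (3) and (4) follow because, for $i>1$, the braid $\sigma_i$ only involves strands $q_i, q_{i+1}$, which are disjoint from the strand carrying $x_1$ or $y_1$ and from the puncture. Relations (5) and (6) are the affine relations; they amount to checking that the braid obtained by letting $x_1$ (resp.\ $y_1$) wrap around and interact with $\sigma_1$ twice is isotopic to the symmetric composition on the other side, again by a local isotopy avoiding $\ast$. Relation (8) is the Hecke quadratic relation and is exactly the HOMFLY skein relation applied to the single crossing $\sigma_1$, after normalization so that the resolution is the identity braid.

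The nontrivial work lies in relations (7) and (9). Relation (7) is the DAHA exchange relation between $x$'s and $y$'s and is verified by drawing the braid $x_1 \sigma_1 y_1 \sigma_1^{-1}$ on $T^2$ and isotoping it past $\sigma_1 y_1 \sigma_1 x_1$; this is a standard braid calculation on the torus, but care must be taken that the isotopy can be arranged so as not to cross $\ast$ (otherwise an unwanted factor of $c$ would appear). Relation (9) is the signature DAHA relation and is where the marked point $\ast$ truly intervenes. Geometrically, the commutator $x_1^{-1} y_1 x_1 y_1^{-1}$ traces a small loop around $\ast$ (this is the torus commutator of the two generators of $\pi_1(T^2)$ based at $q_1$), while the full twist $\sigma_1 \sigma_2 \cdots \sigma_{\kappa-1} \sigma_{\kappa-1} \cdots \sigma_2 \sigma_1$ pulls the first strand around all the others and back. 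One must isotope the commutator loop to the full twist through a homotopy $H$ with $\langle H, \ast\rangle = 1$, and then the $c$-deformed homotopy relation produces exactly the factor $c^2$.

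The main obstacle will be carefully bookkeeping the winding around $\ast$ in the verification of relation (9). The algebraic intersection count $\langle H, \ast\rangle$ must come out to exactly $1$ (giving $c^2$, not $c^4$ or $1$), which requires choosing the right representative isotopy and orienting the commutator loop correctly relative to the puncture. Once this is established, completeness of the presentation follows immediately from Theorem \ref{thm:DAHAiso}: any further relation in $BSk_\kappa(T^2, \bm{q}, \ast)$ would descend to a nontrivial relation in $\ddot{H}_\kappa$ beyond those in Cherednik's standard presentation, contradicting the known structure of DAHA.
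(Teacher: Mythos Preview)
The paper does not actually prove this theorem. It is stated as a known presentation, imported directly from Morton--Samuelson \cite{morton2021dahas}; the only commentary the paper offers is the Remark immediately following the statement, which explains that $x_1$ has been replaced by $x_1^{-1}$ relative to \cite{morton2021dahas} because of a differing orientation convention for the $(\pm 1,0)$ loop, and that the relations have been adjusted accordingly. No verification of the relations is carried out in the paper itself.

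Your outline is a perfectly reasonable sketch of how one would establish such a presentation from scratch, and it is essentially the content of the relevant part of \cite{morton2021dahas}: verify relations (1)--(7) by braid isotopy in the punctured torus, obtain (8) from the HOMFLY skein relation, and obtain (9) from the marked point relation applied to the commutator loop encircling $\ast$. But for the purposes of this paper that work is delegated to the cited reference; there is nothing here to compare your argument against beyond the convention remark.
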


\begin{remark}
    The relations stated above are slightly different than those in \cite{morton2021dahas}. Specifically, we replace $x_1$ with $x_1^{-1}$ since the generator $x_i$ in \cite{morton2021dahas} corresponds to a loop based at $q_1$ in the $(1,0)$ direction whereas our generator goes in the $(-1,0)$ direction. The relations have been adjusted with this in mind. 
\end{remark}

Although $\ddot{H}_\kappa$ can be generated by $\sigma_1,\cdots, \sigma_{\kappa-1},\ x_1, $ and $y_1$, it will be convenient to make explicit the expression for the braids $x_i$ and $y_i$. Using the relations $\sigma_i x_i \sigma_i = x_{i+1}$ and $\sigma_i y_i \sigma_i = y_{i+1}$, it follows that $x_i = \sigma_{i-1}\cdots \sigma_{1}x_1 \sigma_1\cdots \sigma_{i-1}$ and $y_i =  \sigma_{i-1}\cdots \sigma_{1}y_1 \sigma_1\cdots \sigma_{i-1}$. \\

The DAHA $\ddot{H}_\kappa$ has a representation on $\Z[[s]][c^{\pm 1}][X_1^{\pm 1}, \cdots, X_\kappa^{\pm 1}]$ called the {\em polynomial representation}. Note that in this paper we are concerned with the polynomial representation of the DAHA $\ddot{H}_\kappa$ instead of the more common spherical DAHA; we will use the presentation given in \cite{Enomoto2009}.

\begin{definition}
\label{def:polyrep}
    The {\em polynomial representation} of $\ddot{H}_\kappa$ on $\Z[[s]][c^{\pm 1}][X_1^{\pm 1}, \cdots, X_\kappa^{\pm 1}]$ is defined by the following:
    \begin{align}
      x_i \notag &\mapsto X_i,\\
    \sigma_i \notag &\mapsto s\tau_i + \frac{s-s^{-1}}{X_iX_{i+1}^{-1} - 1}(\tau_i - 1),\\
    y_1 \notag &\mapsto \sigma_1^{-1}\cdots \sigma_{\kappa-1}^{-1}\omega,  
    \end{align}

    \noindent where $\tau_i$ permutes $X_i$ and $X_{i+1}$ and for any $f\in \Z[[s]][c^{\pm 1}][X_1^{\pm 1}, \cdots, X_\kappa^{\pm 1}]$, $$(\omega f)(X_1,\cdots,X_\kappa) = f(c^2 X_\kappa,X_1,\cdots,X_{\kappa-1}).$$

   Denote the action above by $$p: \ddot{H}_\kappa \times \Z[[s]][c^{\pm 1}][X_1^{\pm 1}, \cdots, X_\kappa^{\pm 1}] \longrightarrow \Z[[s]][c^{\pm 1}][X_1^{\pm 1}, \cdots, X_\kappa^{\pm 1}].$$

\end{definition}

\begin{remark}
    Observe that in the definition of the DAHA, the variable $s$ does not appear on its own. The HOMFLY skein relation in the definition of the braid skein algebra uses $s-s^{-1}$; in the presentation of Theorem \ref{DAHAbasis}, expanding the relation (8) gives the term $s-s^{-1}$.
\end{remark}

For our purposes, we let $\hbar = s-s^{-1}$ and change the coefficient ring from $\Z[[s]][c^{\pm 1}]$ to $\Z[[\hbar]][c^{\pm 1}]$ when we are not dealing with the polynomial representation.

\subsection{The enhanced polynomial representation}
\hfill\\
In this section, we compute the DAHA-module $BSk_\kappa(T^2,\bm{q},\bm{\alpha}, \ast)$ for the configuration of points $q_1, \dots, q_\kappa$ as in Section \ref{sec:DAHApolyrep} and simple closed curves $\alpha_1,\dots,\alpha_\kappa$ similar to Section \ref{modelcalc}. 

Viewing $T^2$ as $I \times I$ with opposite sides identified, let $q_i = (\frac{i}{2(\kappa+1)} ,\frac{i}{2(\kappa+1)})$ and $\alpha_i = \{\frac{1}{2} + \frac{i}{2\kappa+2}\} \times I$. Moreover, let $\ast = (\frac{1}{2},\frac{1}{2})$. Choose a perturbation term $V(t,q)$ such that $q'_i = \phi^1_{H_V}(T^*_{q_i}T^2) \cap T^2$ is to the left of $q_i$ when viewed on $I \times I$ and $|q_i - q'_i|>|q_j-q'_j|$ whenever $i<j$.

We introduce an additional parameter $d$ which keeps track of the ends of the braids sliding along the $\alpha_i$. 
Specifically, consider the projection $$\pi_\alpha: \alpha_1 \times \cdots \times \alpha_\kappa \longrightarrow T^\kappa$$ of the $\alpha_i$ to $T^\kappa = (S^1)^\kappa$ given by dropping the $x$ coordinate for each $\alpha_i$. 
Let $$\Delta = \{(x_1, \dots, x_\kappa) ~|~ x_i = x_j \text{ for some } i\neq j \}$$ be the big diagonal in $T^\kappa$. The parameter $d$ counts signed intersections with $\Delta$ as the braids are isotoped. 

\begin{definition}
\label{def:d-deformed}
    The {\em $d$-deformed braid skein group} $BSk_\kappa(T^2,\bm{q},\bm{\alpha},\ast)_d$ is the free $\Z[[\hbar]][c^{\pm 1},d^{\pm 1}]$-module generated by elements of the braid skein group $BSk_\kappa(T^2,\bm{q},\bm{\alpha},\ast)$ subject to the {\em ends-slide relation}:

\begin{figure}[h!]
\label{fig:sliderelation}
    \centering
    \includegraphics[width=0.4\textwidth]{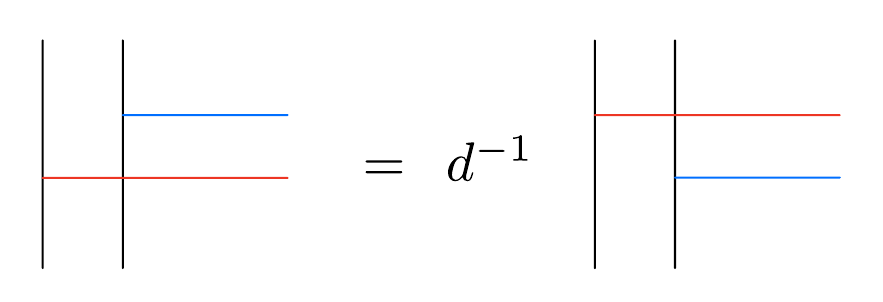}

\end{figure}

    Let $[\gamma]_{c,d}$ be the equivalence class of the braid $\gamma$ in $BSk_\kappa(T^2,\bm{q},\bm{\alpha},\ast)_d$ which keeps track of the parameters $c$ and $d$.
\end{definition}

We now expand our wrapped HDHF group with parameter $c$ to one with parameters $c$ and $d$. 
\begin{definition}
    The {\em wrapped HDHF group of $\sqcup_{i=1}^\kappa T_{q_i}^*T^2$ and $\sqcup_{i=1}^\kappa N^* \alpha_i$ with parameters $c$ and $d$} is $$CW(\sqcup_{i=1}^\kappa T_{q_i}^*T^2,\sqcup_{i=1}^\kappa N^* \alpha_i)_{c,d} := CW(\sqcup_{i=1}^\kappa T_{q_i}^*T^2,\sqcup_{i=1}^\kappa N^* \alpha_i)_c \otimes \Z[d^{\pm 1}].$$
\end{definition}
A slight enhancement of Theorem \ref{thm:Fiso1} which keeps track of intersections with $\Delta \subset T^\kappa$ viewed as a subset of $T^2\times\{1\}$ gives:\\

\noindent\textbf{Theorem 1.1'.}
    $CW(\sqcup_{i=1}^\kappa T_{q_i}^*T^2,\sqcup_{i=1}^\kappa N^* \alpha_i)_{c,d} \simeq BSk_\kappa(T^2,\bm{q},\bm{\alpha},\ast)_d$\\

Under the configuration of points $q_i$ and curves $\alpha_i$ described at the start of this subsection, we choose a specific presentation for $BSk_\kappa(T^2,\bm{q},\bm{\alpha}, \ast)_d$ (and $CW(\sqcup_{i=1}^\kappa T_{q_i}^*T^2,\sqcup_{i=1}^\kappa N^* \alpha_i)_{c,d}$). 

Consider a $\kappa$-tuple of perturbed geodesics $\bm{\gamma}=\{\gamma_1,\dots, \gamma_\kappa\}$ viewed as a braid in $BSk_\kappa(T^2,\bm{q},\bm{\alpha}, \ast)_d$, where each $\gamma_i$ is a perturbed geodesic from $q_i$ to $\alpha_{\sigma(i)}$ for some permutation $\sigma \in S_\kappa$. Let $\alpha = \{ \frac{1}{2}\} \times S^1 \in T^2$ be a simple closed curve between the set of points $q_i$ and curves $\alpha_i$. We define the signed intersection number $n_i = \langle \gamma_i, \alpha \rangle$ of each perturbed geodesic with $\alpha$. The sign of $n_i$ is set to be positive if $\gamma_i$ is in the $(-1,0)$ direction and negative if it is in the $(1,0)$ direction. This intersection number is similar to the process described in the proof of Lemma \ref{lem:modelcomp}. 

The following is a slight enhancement of Lemma \ref{lemma:presentation}:\\

\noindent\textbf{Lemma 1.2'.}
    $BSk_\kappa(T^2,\bm{q},\bm{\alpha}, \ast)_d \simeq (\Z[a_1^{\pm 1}, \dots, a_{\kappa}^{\pm 1}] \otimes \Z[S_{\kappa}])  \otimes \Z[c^{\pm 1}] \otimes \Z[[\hbar]] \otimes \Z[d^{\pm 1}].$ \\
    
    We denote the module with this presentation by $\module$.

\begin{proof}
    Every element in $BSk_\kappa(T^2,\bm{q},\bm{\alpha}, \ast)_d$ can be identified with a $\Z[[\hbar]][c^{\pm 1},d^{\pm 1}]$-linear combination of $\kappa$-tuples of perturbed geodesics viewed as a braid. This is done by homotoping the strands to perturbed geodesics while keeping track of intersections within the strands, with the marked point, and with the big diagonal $\Delta$ of $T^\kappa$. 

   Let $$f: BSk_\kappa(T^2,\bm{q},\bm{\alpha}, \ast)_d \longrightarrow (\Z[a_1^{\pm 1}, \dots, a_{\kappa}^{\pm 1}] \otimes \Z[S_{\kappa}])  \otimes \Z[c^{\pm 1}] \otimes \Z[[\hbar]] \otimes \Z[d^{\pm 1}]$$ be the $\Z[[\hbar]][c^{\pm 1},d^{\pm 1}]$-linear map which sends  
   $$\bm{\gamma} = \{\gamma_1,\dots,\gamma_\kappa \}\mapsto (a_1^{n_1} \cdots a_\kappa^{n_\kappa}, \sigma),$$
   where $\bm{\gamma}$ is a $\kappa$-tuple of perturbed geodesics and $n_i = \langle \gamma_i, \alpha \rangle$ is the signed intersection number described above. 

   A generalization of the model computation in Section \ref{modelcalc} shows us that $f$ is surjective. That is, we can construct a $\kappa$-tuple of perturbed geodesics with the right permutation and intersections with $\alpha$. 
   
   Let $\bm{y} \in BSk_\kappa(T^2,\bm{q},\bm{\alpha}, \ast)_d$ and suppose $f(\bm{y}) = 0$. By an argument similar to the discussion at the end of Section \ref{sec:geoaction}, we can identify $\bm{y}$ with a $\Z[[\hbar]][c^{\pm 1}, d^{\pm 1}]$-linear combination of distinct $\kappa$-tuples of perturbed geodesics $\bm{\gamma}_i$: $$\bm{y} = \sum_{i=1}^n g_i \bm{\gamma}_i,$$ where $g_i \in \Z[[\hbar]][c^{\pm 1}, d^{\pm 1}]$. Since $f(\bm{y}) =0$, it follows that $\sum_{i=1}^n g_i f(\bm{\gamma}_i) = 0$. Since the perturbed geodesics $\bm{\gamma}_i$ are distinct, they belong to different homotopy classes and so their images under $f$ are linearly independent. It follows that $\bm{y} = 0$ and thus $f$ is injective. 
\end{proof}

Consider an element $( 1,\sigma) \in \module$. This element is represented by $V$-perturbed geod\-esics from each $q_i$ to $\alpha_{\sigma(i)}$ which do not intersect each other or the curve $\alpha$. Let $\sigma_i \in \ddot{H}_\kappa$, viewed as a braid consisting of strands $q_i \mapsto q'_{i+1}$, $q_{i+1} \mapsto q'_i$ and $q_j \mapsto q'_j$ for $j \neq i,\ i+1$, where by $a\mapsto b$ we mean "from $a$ to $b$". Our choice of perturbation term $V(t,q)$ guarantees that the strand from $q_i$ crosses over the strand from $q_{i+1}$ when projected down from $T^2 \times [0,1] \to T^2$. 

\begin{lemma}
\label{geodesicisotoping}
    In the situation above, if $\sigma(i) < \sigma(i+1)$, then the concatenation of the braid $\sigma_i$ and the geodesics representing $(1,\sigma)$ can be isotoped to geodesics $d^{-1}(1, \sigma_i \sigma)$. (In the expression $\sigma_i\sigma$, $\sigma_i$ is viewed as an element of $S_\kappa$ under the projection $B_\kappa(T^2,\bm{q})\to S_\kappa$.)  

    On the other hand, if $\sigma(i) > \sigma(i+1)$, then the concatenation of the braid $\sigma_i$ and the geodesics representing $(1,\sigma)$ is equivalent to a linear combination of geodesics $\hbar (1, \sigma) + d(1,\sigma_i\sigma)$. 
\end{lemma}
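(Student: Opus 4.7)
My plan is to argue this directly by topology of the braid in $T^2 \times [0,1]$, tracking the HOMFLY skein and ends-slide relations carefully. First I would describe the concatenation $\sigma_i \cdot (1,\sigma)$ explicitly: only the strands issuing from $q_i$ and $q_{i+1}$ are affected, since the other $\kappa-2$ strands remain horizontal perturbed geodesics at heights $y_j$ ending at the corresponding $\alpha_{\sigma(j)}$, so they can be suppressed from the analysis. The two affected strands are: from $q_i$ up and across to $q'_{i+1}$ (crossing over, by the perturbation choice), then a horizontal geodesic to $\alpha_{\sigma(i+1)}$ at height $y_{i+1}$; and from $q_{i+1}$ down and across to $q'_i$ (passing under), then a horizontal geodesic to $\alpha_{\sigma(i)}$ at height $y_i$.

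For Case 1 ($\sigma(i)<\sigma(i+1)$), I would exhibit a continuous straightening of these two arcs to the direct horizontal geodesics $q_i\to\alpha_{\sigma(i+1)}$ at height $y_i$ and $q_{i+1}\to\alpha_{\sigma(i)}$ at height $y_{i+1}$, i.e.\ the representatives of $(1,\sigma_i\sigma)$. During this isotopy the endpoint on $\alpha_{\sigma(i+1)}$ slides from $y_{i+1}$ down to $y_i$ and the endpoint on $\alpha_{\sigma(i)}$ slides from $y_i$ up to $y_{i+1}$, so their $\pi_\alpha$-images swap their $(i,i+1)$-entries in $T^\kappa$ and therefore cross the big diagonal $\Delta$ exactly once. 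The ends-slide relation in Definition \ref{def:d-deformed} then contributes a single factor of $d^{-1}$, with the sign fixed by the positive crossing in $\sigma_i$ being compatible with the orientation of the slide. The crucial observation is that because $\alpha_{\sigma(i)}$ lies to the left of $\alpha_{\sigma(i+1)}$, the 3D crossing of $\sigma_i$ is consistent with the slide direction, so no intermediate crossing changes sign and the HOMFLY relation is not triggered.

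For Case 2 ($\sigma(i)>\sigma(i+1)$), the analogous straightening would force a crossing in the 3D braid to reverse sign (since now $\alpha_{\sigma(i)}$ lies to the right of $\alpha_{\sigma(i+1)}$, so the required slide is in the opposite sense from what $\sigma_i$ naturally induces). Instead of a straight isotopy, I would use the HOMFLY skein relation to rewrite
\[
\sigma_i \cdot (1,\sigma) \;=\; \sigma_i^{-1}\cdot (1,\sigma) \;+\; \hbar\cdot e\cdot (1,\sigma),
\]
where $e$ is the identity resolution. The second term is visibly $\hbar(1,\sigma)$. The first term is now a concatenation with a negative crossing, whose slide is compatible with the reversed relative order of $\alpha_{\sigma(i)}$ and $\alpha_{\sigma(i+1)}$; the Case 1 argument applied to $\sigma_i^{-1}$ instead of $\sigma_i$ yields $d\,(1,\sigma_i\sigma)$, with the opposite sign of $d$-exponent because the slide across $\Delta$ goes the other way.

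The main obstacle is pinning down the signs of the $d$-exponents and of the HOMFLY coefficient. The underlying isotopies and resolutions are manifestly the ones described above, but checking that Case 1 gives $d^{-1}$ and Case 2 gives $d$ (rather than the reverse) requires matching: (i) the orientation convention used for the ends-slide relation in Definition \ref{def:d-deformed}, (ii) the sign of the positive crossing in $\sigma_i$ fixed by the choice of $V$, and (iii) the direction in which the two endpoints move on their respective $\alpha$-curves. I would fix the conventions in a single base case (e.g.\ $\kappa=2$, $\sigma=\mathrm{id}$ versus $\sigma=(12)$), where the picture is explicit and small, and propagate the signs to the general statement.
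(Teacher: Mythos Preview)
Your proposal is correct and follows essentially the same approach as the paper: an end-slide in Case~1 that cancels the $\sigma_i$ crossing and picks up a single $d^{\pm 1}$, and in Case~2 a HOMFLY resolution $\sigma_i=\sigma_i^{-1}+\hbar$ followed by the Case~1 argument applied to $\sigma_i^{-1}$. The only cosmetic difference is that the paper slides a \emph{single} endpoint (the one on $\alpha_{\sigma(i+1)}$) past the other strand, explicitly producing a negative crossing at the top that cancels the positive $\sigma_i$ crossing at the bottom; your description of sliding both endpoints and tracking the diagonal $\Delta$ globally is equivalent but slightly less concrete about why no HOMFLY relation is triggered mid-isotopy. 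Your plan to fix the $d^{\pm 1}$ sign via a $\kappa=2$ base case is exactly what the paper's Figure~\ref{fig:sigmaicross} does.
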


\begin{proof}
    Suppose $\sigma(i)<\sigma(i+1)$. (See the left-hand side of Figure \ref{fig:sigmaicross}.) Slide the end of the strand going to $\alpha_{\sigma(i+1)}$ along $\alpha_{\sigma(i+1)}$ past the strand going to $\alpha_{\sigma(i)}$. Due to the arrangement of the $\alpha_j$, this creates a crossing in which the sliding strand crosses over the other, picking up a factor of $d^{-1}$. The result is a braid which has a positive crossing at the bottom (by this we mean at a lower $t$-coordinate where the braid is in $T^2\times[0,1]$ with coordinates $(q,t)$) due to the $\sigma_i$ and a negative crossing at the top. Thus we can isotope the two strands apart, arriving at the set of geodesics representing $d^{-1}(1,\sigma_i \sigma)$.

    Suppose on the other hand that $\sigma(i) > \sigma (i+1)$. (See the right-hand side of Figure \ref{fig:sigmaicross}.) Resolve the $\sigma_i$ braid as $\sigma_i^{-1} + \hbar$ to get two braids: $\sigma_i ^{-1} \cdot (1,\sigma) + \hbar (1,\sigma)$. For the $\sigma_i ^{-1} \cdot (1,\sigma)$ braid, slide the strand along $\alpha_{\sigma(i)}$. This creates a positive crossing and picks up a factor of $d$. With this positive crossing, we can isotope the strands apart to get a braid $d(1,\sigma_i\sigma)$. Therefore, $\rho((1,\sigma),\sigma_i) = \hbar (1, \sigma) + d(1,\sigma_i\sigma)$.
\end{proof}

\begin{figure}[h]
    \centering
    \includegraphics[width=0.7\textwidth]{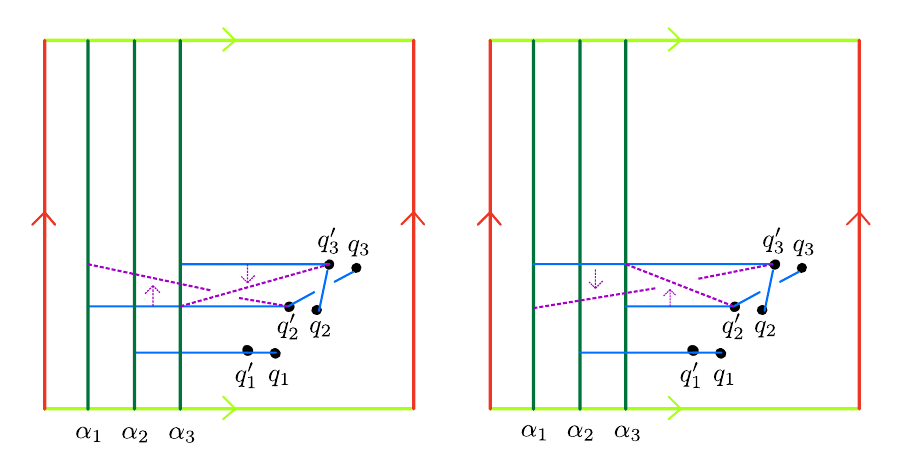}
    \caption{The compositions $\sigma_2 (1,(12))$ and $\sigma_2 (1,(123))$. On the left, we have the case where $\sigma(i)<\sigma(i+1)$ and sliding the ends of the strands creates a crossing of the dotted purple arcs which allows us to separate the strands. On the right, we have the case where $\sigma(i) > \sigma(i+1)$ and we see that the strands are linked after sliding the ends of the strands across each other. }
    
    \label{fig:sigmaicross}
\end{figure}

\noindent \textit{Notation}: Recall that $CW(\sqcup_{i=1}^\kappa T_{q_i}^*T^2,\sqcup_{i=1}^\kappa N^* \alpha_i)_c$ is a right DAHA-module. It follows that there is a right $\ddot{H}_\kappa$ action on $\module$. It is often convenient to identify elements of $\module$ with $\kappa$-tuples of paths which are elements of $BSk_\kappa(T^2,\bm{q},\bm{\alpha}, \ast)$. Since composition of paths is written as a left multiplication, we choose to adopt the notation for a left module. Let $h \in \ddot{H}_\kappa$ and $x \in \module$, then we denote $\rho(x,h)= h \cdot x$, where $\rho$ is as in Section \ref{sec:algaction}. After identifying $h_1,h_2 \in \ddot{H}_\kappa$ with braids $\gamma_1,\gamma_2$ we have that $\rho(x,\tilde{\rho}(h_2,h_1)) = (\gamma_1\gamma_2)\cdot x = \gamma_1 \cdot (\gamma_2 \cdot x)$, a left module structure, where $\tilde{\rho}: BSk_\kappa(T^2,\bm{q}, \bm{q}, \ast) \otimes BSk_\kappa(T^2,\bm{q}, \bm{q}, \ast) \longrightarrow BSk_\kappa(T^2,\bm{q}, \bm{q}, \ast)$ is the product on the braid skein algebra.

\begin{corollary}
\label{thm:averaging}
    Let $\sigma_i \in \ddot{H}_\kappa$ and $(1,\sigma) \in \module$. Then, setting $d=s$, $$\sigma_i \cdot ((1,\sigma)+(1,\sigma_i\sigma)) = s((1,\sigma)+(1,\sigma_i\sigma)).$$
\end{corollary}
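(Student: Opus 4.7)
The plan is to apply Lemma \ref{geodesicisotoping} term-by-term to the sum $(1,\sigma)+(1,\sigma_i\sigma)$, observing that $\sigma$ and $\sigma_i\sigma$ satisfy opposite inequalities between positions $i$ and $i+1$. Without loss of generality, I would assume $\sigma(i) < \sigma(i+1)$; the case $\sigma(i) > \sigma(i+1)$ follows by swapping the roles of $\sigma$ and $\sigma_i\sigma$ in what follows.

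First I apply Lemma \ref{geodesicisotoping} to the summand $(1,\sigma)$: since $\sigma(i) < \sigma(i+1)$, we have
\[
\sigma_i\cdot (1,\sigma) \;=\; d^{-1}\,(1,\sigma_i\sigma).
\]
Next I apply the same lemma to $(1,\sigma_i\sigma)$: the permutation $\sigma_i\sigma$ satisfies $(\sigma_i\sigma)(i) = \sigma(i+1) > \sigma(i) = (\sigma_i\sigma)(i+1)$, so we are in the second case and
\[
\sigma_i\cdot (1,\sigma_i\sigma) \;=\; \hbar\,(1,\sigma_i\sigma) + d\,(1,\sigma_i\cdot \sigma_i\sigma) \;=\; \hbar\,(1,\sigma_i\sigma) + d\,(1,\sigma),
\]
using that $\sigma_i^2 = e$ in $S_\kappa$ (recall that in the second formula of Lemma \ref{geodesicisotoping}, $\sigma_i$ on the right-hand side is understood as an element of $S_\kappa$).

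Adding the two contributions gives
\[
\sigma_i\cdot\bigl((1,\sigma)+(1,\sigma_i\sigma)\bigr) \;=\; d\,(1,\sigma) + (d^{-1}+\hbar)\,(1,\sigma_i\sigma).
\]
Now substitute $d=s$ and recall from Section \ref{sec:DAHApolyrep} that $\hbar = s - s^{-1}$; then $d = s$ and $d^{-1}+\hbar = s^{-1} + (s-s^{-1}) = s$, so both coefficients equal $s$ and the right-hand side becomes $s\bigl((1,\sigma)+(1,\sigma_i\sigma)\bigr)$, as desired.

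There is no real obstacle here beyond bookkeeping; the main thing to be careful about is the symmetry argument (that $\sigma$ and $\sigma_i\sigma$ are interchanged by the two cases of Lemma \ref{geodesicisotoping}) and the identification $\hbar = s - s^{-1}$ that makes the $s^{-1}$ from the braid-sliding cancel with the $\hbar$ from the HOMFLY resolution. This is precisely the Hecke-type eigenvalue relation $(\sigma_i - s)(\sigma_i + s^{-1}) = 0$ manifesting itself on the symmetrized sum, which is why this corollary is the key algebraic input for identifying the averaged module with the polynomial representation.
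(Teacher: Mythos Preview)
Your proof is correct and follows essentially the same approach as the paper's: apply Lemma~\ref{geodesicisotoping} to each summand, observe that $\sigma$ and $\sigma_i\sigma$ fall into opposite cases, add the results, and use $\hbar = s - s^{-1}$ with $d=s$ to obtain the common coefficient $s$. The only cosmetic difference is that the paper substitutes $d=s$ at the outset rather than at the end.
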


\begin{proof}
    Suppose $\sigma(i) < \sigma(i+1)$. Then $\sigma_i \cdot (1,\sigma) = s^{-1}(1,\sigma_i\sigma)$ by Lemma \ref{geodesicisotoping}. On the other hand, $(\sigma_i\sigma)(i)> (\sigma_i \sigma)(i+1)$, so $\sigma_i \cdot (1,\sigma_i\sigma) = \hbar (1,\sigma_i\sigma) + s(1,\sigma)$. Adding the terms, expanding $\hbar = s - s^{-1}$, and canceling the $s^{-1}(1,\sigma_i\sigma)$ gives the result.

    The case $\sigma(i) > \sigma(i+1)$ follows immediately by letting $\sigma = \sigma_i \sigma$. 
\end{proof}

\begin{definition}
\label{def:action}
Let $(\bm{a}, \sigma) = (a_1^{n_1}\dotsm a_\kappa^{n_\kappa}, \sigma) \in \Z [ a_1^{\pm 1}, \dots, a_{\kappa}^{\pm 1} ] \times S_{\kappa}$ be an element of $\module$. The {\em enhanced polynomial representation} of the DAHA $\ddot{H}_\kappa$ (with presentation given in Theorem \ref{DAHAbasis}) on $\module$ is defined on generators as follows:

    (1) $x_i \cdot (\bm{a}, \sigma) = (a_i \cdot \bm{a}, \sigma),$

    (2) $\sigma_i \cdot (1, \sigma) = \begin{cases}
     d^{-1}(1, \sigma_i \sigma) & \text{ if }\sigma(i)<\sigma(i+1)\\
     d(1,\sigma_i \sigma) + \hbar (1, \sigma) & \text{ if } \sigma(i)>\sigma(i+1)
    
    \end{cases},$

   (3) $y_1 \cdot (\bm{a},\sigma) = c^{2n_1}\tau_\kappa^{-1} \cdot (\bm{a}_{\tau_\kappa},\tau_\kappa\sigma)$,
   
   where $\tau_\kappa = \sigma_{\kappa-1}\cdots\sigma_1$ and $\bm{a}_{\tau_{\kappa}} = a_\kappa^{n_1}a_1^{n_2}\cdots a_{\kappa-1}^{n_\kappa}$.\\

(2) in Definition \ref{def:action} only defines the action of $\sigma_i$ on an element $(1,\sigma)$, but we can extend this to an action on $(\bm{a},\sigma)$ by using the action of $x_i$ along with the relations of $\ddot{H}_\kappa$. Since $\sigma_i x_i = x_{i+1} \sigma_i^{-1}$ and $\sigma_i - \sigma_i^{-1} = \hbar$, it follows that $\sigma_i x_i = x_{i+1}(\sigma_i - \hbar) = x_{i+1} \sigma_i - \hbar x_{i+1}$. Similarly, $\sigma_i x_{i+1} = x_i \sigma_i + \hbar x_{i+1}$. If $j \neq i,\ i+1$, then $\sigma_i x_j = x_j \sigma_i$. Thus we are able to express any product of $\sigma_i$ and $x_j$ as an expression where all the $x_j$ are in front of the $\sigma_i$.

Since $(\bm{a},\sigma) = (x_1^{n_1} \cdots x_\kappa^{n_\kappa}) \cdot (1,\sigma)$, it follows that $\sigma_i \cdot (\bm{a},\sigma) = (\sigma_i \cdot x_1^{n_1} \cdots x_\kappa^{n_\kappa}) \cdot (1,\sigma)$. 

\begin{claim}
    We can express $\sigma_i \cdot x_1^{n_1} \cdots x_\kappa^{n_\kappa}$ as $$f(x_1,\cdots, x_\kappa)\sigma_i + g(x_1,\cdots,x_\kappa),$$ where $f,g \in \Z[\hbar,x_1^{\pm1},\cdots, x_\kappa^{\pm 1}]$.
\end{claim}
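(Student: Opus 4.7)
The plan is to push $\sigma_i$ to the right past the factors $x_1^{n_1}, x_2^{n_2}, \ldots, x_\kappa^{n_\kappa}$ one at a time, using the three commutation identities already derived in the paragraph preceding the claim. Since the $x_j$ mutually commute (they are loops based at distinct points in the thickened punctured torus, so they may be isotoped so as to have disjoint support), every intermediate expression will automatically sit in the subring $\Z[\hbar][x_1^{\pm 1},\ldots,x_\kappa^{\pm 1}][\sigma_i]$.

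First I would handle all the trivial factors: for $j \notin \{i,i+1\}$ one has $\sigma_i x_j^{n_j} = x_j^{n_j}\sigma_i$ with no remainder, so $\sigma_i$ may be freely commuted past these factors. This reduces the claim to showing that $\sigma_i x_i^{n_i}x_{i+1}^{n_{i+1}}$ can be put in the form $f(x_i,x_{i+1})\sigma_i + g(x_i,x_{i+1})$.

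Next I would extend the two given relations $\sigma_i x_i = x_{i+1}\sigma_i - \hbar x_{i+1}$ and $\sigma_i x_{i+1} = x_i \sigma_i + \hbar x_{i+1}$ to negative exponents. Multiplying the first identity on the right by $x_i^{-1}$ and on the left by $x_{i+1}^{-1}$ (and using commutativity of the $x_j$) gives $\sigma_i x_i^{-1} = x_{i+1}^{-1}\sigma_i + \hbar x_i^{-1}$; a parallel manipulation of the second yields $\sigma_i x_{i+1}^{-1} = x_i^{-1}\sigma_i - \hbar x_i^{-1}$. Thus for every $\epsilon \in \{\pm 1\}$, both $\sigma_i x_i^{\epsilon}$ and $\sigma_i x_{i+1}^{\epsilon}$ have the desired form $f\sigma_i + g$ with $f, g \in \Z[\hbar][x_i^{\pm 1},x_{i+1}^{\pm 1}]$.

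Finally I would proceed by induction on $|n_i|+|n_{i+1}|$. The base case is trivial. For the inductive step, write $\sigma_i x_i^{n_i}x_{i+1}^{n_{i+1}} = (\sigma_i x_i^{\epsilon}) x_i^{n_i-\epsilon}x_{i+1}^{n_{i+1}}$ with $\epsilon = \operatorname{sgn}(n_i)$, substitute the identity $\sigma_i x_i^{\epsilon} = \tilde f \sigma_i + \tilde g$ established in the previous step, distribute, and apply the inductive hypothesis to the resulting term $\sigma_i x_i^{n_i-\epsilon}x_{i+1}^{n_{i+1}}$ on the right (reducing either $|n_i|$ or the exponent of $x_{i+1}$ by a similar move if $n_i = 0$). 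There is no genuine obstacle here beyond bookkeeping: every substitution keeps coefficients in $\Z[\hbar][x_1^{\pm 1},\ldots,x_\kappa^{\pm 1}]$ and strictly decreases the induction quantity, so the process terminates with an expression of the required form $f(x_1,\ldots,x_\kappa)\sigma_i + g(x_1,\ldots,x_\kappa)$.
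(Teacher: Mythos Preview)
Your proposal is correct and follows essentially the same approach as the paper, which simply states that the claim ``follows from repeated applications of the relations discussed above.'' You have just spelled out those repeated applications in detail (handling the trivial commuting factors, deriving the analogues for $x_i^{-1}$ and $x_{i+1}^{-1}$, and organizing the process as an induction on $|n_i|+|n_{i+1}|$), which is exactly what the one-line proof in the paper is gesturing at.
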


\begin{proof}
    This follows from repeated applications of the relations discussed above. 
\end{proof}

The action of $\sigma_i$ on a general element $(\bm{a},\sigma)$ then follows from the claim above and Definition \ref{def:action}.

Similarly, we can compute $y_i \cdot (\bm{a},\sigma)$ by using the relation $y_i = \sigma_{i-1}y_{i-1}\sigma_{i-1}$ repeatedly to reduce to an expression of transpositions and $y_1$.

    \begin{proposition}
    \label{prop:action}
        The action defined above is the one given by $$\rho_d: BSk_\kappa(T^2, \bm{q},\bm{q}, \ast) \otimes BSk_\kappa(T^2,\bm{q},\bm{\alpha}, \ast)_d \longrightarrow BSk_\kappa(T^2,\bm{q},\bm{\alpha}, \ast)_d, $$
        where $\rho_d([\gamma_1]_{c},[\gamma_2]_{c,d}) = [\gamma_1\gamma_2]_{c,d}.$
    \end{proposition}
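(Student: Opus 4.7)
The plan is to prove equality of the two actions by checking on a generating set for $\ddot H_\kappa$. By Theorem \ref{DAHAbasis}, $\ddot H_\kappa$ is generated by $\sigma_1,\dots,\sigma_{\kappa-1},x_1,y_1$. Since $\rho_d$ is a right action by construction (braid concatenation), and since any right action is determined by its values on a generating set of the algebra, it suffices to show that $\rho_d$ agrees with Definition \ref{def:action} on each of $\sigma_i$, $x_1$, and $y_1$; the two actions then coincide on all of $\ddot H_\kappa$.

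For $\sigma_i$, the identity $\rho_d(\sigma_i,(1,\sigma)) = \sigma_i\cdot(1,\sigma)$ is exactly Lemma \ref{geodesicisotoping}: the two cases $\sigma(i)<\sigma(i+1)$ and $\sigma(i)>\sigma(i+1)$ output $d^{-1}(1,\sigma_i\sigma)$ and $d(1,\sigma_i\sigma)+\hbar(1,\sigma)$, respectively. The extension from $(1,\sigma)$ to $(\bm a,\sigma)$ follows by writing $(\bm a,\sigma) = x_1^{n_1}\cdots x_\kappa^{n_\kappa}\cdot(1,\sigma)$ and using the DAHA relations, as Definition \ref{def:action} itself does. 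For $x_1$, the braid is the $(-1,0)$-loop at $q_1 = (1/(2(\kappa+1)),1/(2(\kappa+1)))$, which lies entirely below the horizontal line through $\ast=(1/2,1/2)$ and hence misses $\ast$. Prepending it to a perturbed-geodesic representative of $(\bm a,\sigma)$ increases the signed intersection number $n_1 = \langle \gamma_1,\alpha\rangle$ by one while leaving all other data unchanged, and introduces no end-slides; thus no $c$- or $d$-corrections arise, giving $(a_1\cdot\bm a,\sigma)$.

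The main work lies in $y_1$. Represent $y_1\gamma$ as the vertical loop at $q_1$ concatenated with the $\kappa$-tuple of perturbed geodesics for $(\bm a,\sigma)$, and process it in three steps. \emph{(i)} Drag the vertical loop along $\gamma_1$ out to its endpoint on $\alpha_{\sigma(1)}$: each time $\gamma_1$ crosses the curve $\alpha=\{1/2\}\times S^1$ (a signed total of $n_1$ times) the moving loop sweeps over $\ast$, and the marked-point relation of Definition \ref{def:braidskeinalgebra} contributes a scalar $c^{2n_1}$. \emph{(ii)} At the terminal end the loop has become one full revolution of the endpoint of strand $1$ around the vertical curve $\alpha_{\sigma(1)}$. \emph{(iii)} Realize this full revolution as a sequence of $\kappa-1$ elementary end-swaps with the other strand endpoints, in the cyclic order dictated by the configuration of the $q_j$: each swap is an instance of Lemma \ref{geodesicisotoping} applied at the terminal end instead of the initial one, contributing a $\sigma_j^{-1}$ with its accompanying $d$-factor. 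The composite sequence is exactly $\tau_\kappa^{-1}=\sigma_1^{-1}\sigma_2^{-1}\cdots\sigma_{\kappa-1}^{-1}$, the permutation becomes $\tau_\kappa\sigma$, and the $a$-exponents rotate cyclically into $\bm a_{\tau_\kappa}$. Combining the three steps yields $y_1\cdot(\bm a,\sigma)=c^{2n_1}\,\tau_\kappa^{-1}\cdot(\bm a_{\tau_\kappa},\tau_\kappa\sigma)$, matching clause (3) of Definition \ref{def:action}.

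The hard part is Step \emph{(iii)} for $y_1$: one must match the cyclic end-slide precisely with the word $\tau_\kappa^{-1}$, bookkeep the redistribution of the exponents $n_i$ into $\bm a_{\tau_\kappa}$, and check that the orientation/sign conventions make the $d$-factors collected agree with those implicit in the formal action of $\tau_\kappa^{-1}$ through clause (2) of Definition \ref{def:action}. Once this matching is verified, equality of the two actions on the generators $\sigma_i,x_1,y_1$ forces coincidence on all of $\ddot H_\kappa$, completing the proof.
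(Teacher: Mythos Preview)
Your overall strategy---check the equality on the generators $\sigma_i$, $x_1$, $y_1$---matches the paper's, and your arguments for $x_i$ and $\sigma_i$ are correct and essentially the same as in the paper (immediate from the definitions and Lemma~\ref{geodesicisotoping}, respectively).

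For $y_1$, however, your three-step argument is genuinely different from the paper's, and step~(iii) contains a real gap. You propose to decompose the full revolution at the terminal end into ``$\kappa-1$ elementary end-swaps \ldots\ each an instance of Lemma~\ref{geodesicisotoping} applied at the terminal end instead of the initial one, contributing a $\sigma_j^{-1}$.'' This does not work as stated: Lemma~\ref{geodesicisotoping} is specifically about concatenating the braid $\sigma_i$ at the \emph{initial} ($q$-) end, and the paper provides no terminal-end analog. What the ends-slide relation of Definition~\ref{def:d-deformed} actually produces when strand~$1$'s endpoint passes another endpoint in $y$-coordinate is a scalar power of $d$, not an inserted braid $\sigma_j^{-1}$ at the $q$-side. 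Moreover, the revolution of strand~$1$'s endpoint around $\alpha_{\sigma(1)}$ does not by itself change which $q_i$ is connected to which $\alpha_j$, nor the wrapping numbers $n_i$; so your assertions that ``the permutation becomes $\tau_\kappa\sigma$'' and ``the $a$-exponents rotate cyclically into $\bm a_{\tau_\kappa}$'' do not follow from the end-slides you describe. The identity you want---that the revolution-braid equals the concatenated braid $\tau_\kappa^{-1}\cdot(\bm a_{\tau_\kappa},\tau_\kappa\sigma)$---is exactly what must be shown, and your proposed mechanism does not establish it.

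The paper's argument for~(3) avoids this decomposition entirely. It works in the universal cover of $T^2$: starting from the concatenated braid $y_1\cdot(\bm a,\sigma)$, it slides the endpoint of strand~$1$ down along $\alpha_{\sigma(1)}$ and homotopes the strand into a specific configuration, arguing (with reference to Figure~\ref{fig:y1action}) that this can be done without creating any crossings with other strands or with the marked point. Only then does it pull the strand across copies of the marked point, picking up exactly $c^{2n_1}$, and recognizes the resulting braid directly as $\tau_\kappa^{-1}\cdot(\bm a_{\tau_\kappa},\tau_\kappa\sigma)$. This is a braid identity verified pictorially in the universal cover, not a stepwise reduction through terminal $d$-factors that would then have to be matched term-by-term against the formal action of $\tau_\kappa^{-1}$ via clause~(2).
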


    \begin{proof}
       It suffices to verify (1), (2), and (3) in Definition \ref{def:action}. 
       
       (1) is immediate from the definitions.

       (2) follows from Lemma \ref{geodesicisotoping}.

       (3) is easiest seen on the universal cover of $T^2$; see Figure \ref{fig:y1action} below. We slide the endpoint of the strand going from $q_1$ to $\alpha_{\sigma(i)}$ in $y_1\cdot (\bm{a},\sigma)$ down along $\alpha_{\sigma(i)}$. We can homotope the strand until it looks like the right side of Figure \ref{fig:y1action} without creating any crossings with other strands or the marked point. Thus the two braids are the same element in $BSk_\kappa(T^2,\bm{q},\bm{\alpha}, \ast)_d$. Next, we can pull the strand down across the marked point, picking up a factor of $c^2$ for each marked point we cross in this direction. The resulting braid is equivalent to $c^{2n_1}(\tau_\kappa)^{-1} \cdot (\bm{a}_{\tau_\kappa},\tau_\kappa\sigma)$. This gives (3).
    \end{proof}

\begin{figure}[h]

  \centering
  \includegraphics[width=1.0\linewidth]
  {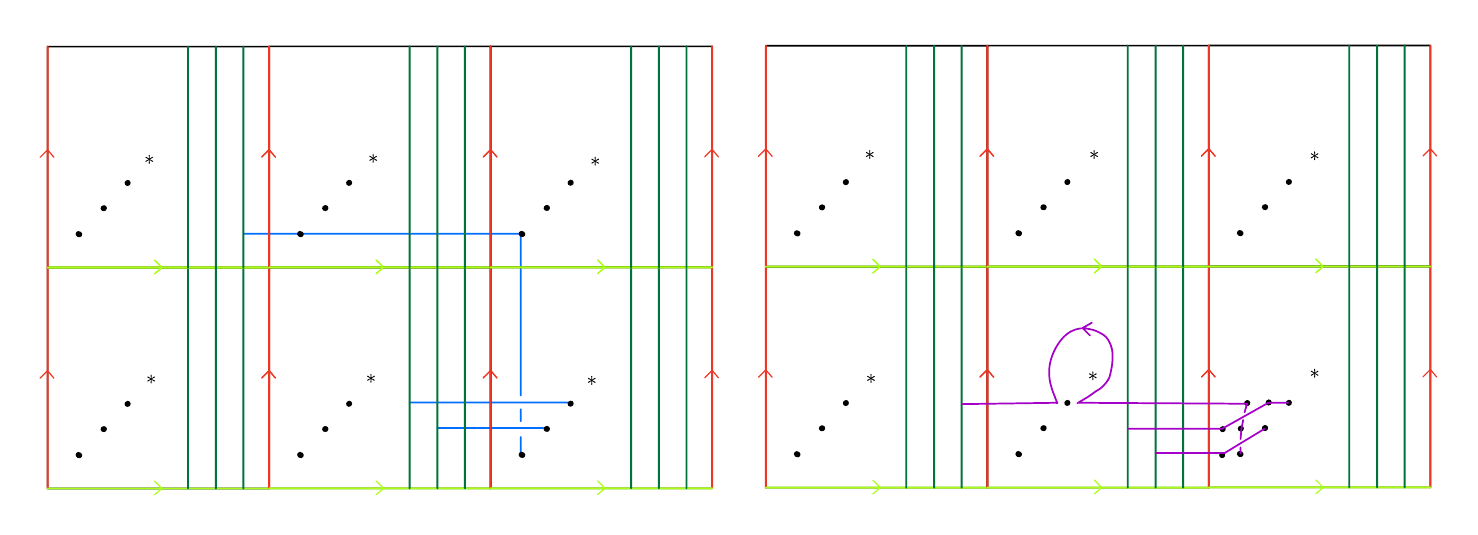}
\caption{The action of $y_1$ on $(a_1,(13))$.} 
\label{fig:y1action}
\end{figure}

\end{definition}

\begin{example}
Suppose $\kappa=2$. We compute $(\sigma_1 \cdot y_1) \cdot (a_1^2 a_2^{-1},\sigma_1)$. First of all, $$y_1 \cdot (a_1^2 a_2^{-1},\sigma_1) = c^4 \sigma_1^{-1} (a_1^{-1}a_2^{2},e).$$ Then $\sigma_1 \cdot c^4 \sigma_1^{-1} (a_1^{-1}a_2^{2},e) = c^4 (a_1^{-1}a_2^{2},e)$.\\
\end{example}

In order to identify our enhanced polynomial representation with the standard polynomial representation from Definition \ref{def:polyrep}, we must first find a way to eliminate the $S_\kappa$ factor of $\module$. The standard polynomial representation acts on $\Z[[s]][c^{\pm 1}][X_1^{\pm 1}, \cdots, X_\kappa^{\pm 1}]$, whereas the generators of $\module$ have permutations $\sigma \in S_\kappa$ associated to them. 

We identify $a_i$ with $X_i$ and substitute $\hbar = s-s^{-1}$ to revert back to the ring $\Z[s^{\pm 1}, c^{\pm 1}]$. We take an average of the permutations which defines a $\Z[[s]][c^{\pm 1}]$-linear map:  $$ \hypertarget{identificationmap}{S}: \Z[[s]][c^{\pm 1}][X_1^{\pm 1}, \cdots, X_\kappa^{\pm 1}] \longrightarrow \module$$
$$ X_1^{n_1}\cdots X_\kappa^{n_\kappa}  \mapsto  \sum_{\sigma \in S_\kappa}(a_1^{n_1} \cdots a_\kappa^{n_\kappa}, \sigma).$$

\begin{remark}
    The $\alpha_i$ are distinct and this is captured by our permutation term $\sigma$ in a generator $(\bm{a},\sigma)$. The permutation-averaging can be thought of as getting rid of this distinctness, resulting in a permutation-free polynomial determined by $\bm{a}$. 
\end{remark}

The following theorem is a more precise version of Theorem \ref{thm:mainaction}.

\begin{theorem}
\label{thm:polyrepiso}
  After setting $d = s$, the standard polynomial representation agrees with the enhanced polynomial representation composed with the permutation-averaging map \hyperlink{identificationmap}{$S$}. 

  More precisely, given an element $h\in \ddot{H}_\kappa$ and an element $f \in \Z[[s]][c^{\pm 1}][X_1^{\pm 1}, \cdots, X_\kappa^{\pm 1}]$, 
  $$S(p(h,f)) = \rho_d (h,S(f)),$$
  \noindent where $p(h,f)$ is the action of the standard polynomial representation and $\rho_d (h,S(f))$ is the action of the enhanced polynomial representation.
\end{theorem}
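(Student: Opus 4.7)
The plan is to reduce the identity to $h$ ranging over the generators of $\ddot{H}_\kappa$ acting on Laurent monomials, and then to exploit the single fact that $S$ intertwines $X_i$-multiplication on $R = \Z[[s]][c^{\pm 1}][X_1^{\pm 1},\dots,X_\kappa^{\pm 1}]$ with the $x_i$-action on $\module$. By $\Z[[s]][c^{\pm 1}]$-linearity of $S$, $p$, and $\rho_d$, and by induction on word length using associativity of both module actions, it suffices to verify $S(p(h,f)) = \rho_d(h,S(f))$ for $h \in \{x_i, \sigma_i, y_1\}$ from the presentation in Theorem~\ref{DAHAbasis} and for $f = X_1^{n_1}\cdots X_\kappa^{n_\kappa}$. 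Directly from the definition of $S$ one has $S(X_1^{n_1}\cdots X_\kappa^{n_\kappa}) = x_1^{n_1}\cdots x_\kappa^{n_\kappa}\cdot S(1)$, which both settles the $h = x_i$ case and rewrites $S(f)$ in a form convenient for the remaining cases.

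The heart of the argument is the $h = \sigma_i$ case. Using the DAHA commutation relations
\[
\sigma_i x_i = x_{i+1}\sigma_i - \hbar x_{i+1},\qquad \sigma_i x_{i+1} = x_i \sigma_i + \hbar x_{i+1},\qquad \sigma_i x_j = x_j \sigma_i\ (j\neq i,i+1),
\]
which are derived in the paper from Theorem~\ref{DAHAbasis} and hold inside $\ddot{H}_\kappa$ itself, one may rewrite
\[
\sigma_i \cdot x_1^{n_1}\cdots x_\kappa^{n_\kappa} = P(x_1,\dots,x_\kappa)\,\sigma_i + Q(x_1,\dots,x_\kappa)
\]
for some Laurent polynomials $P, Q \in \Z[[\hbar]][x_1^{\pm 1},\dots,x_\kappa^{\pm 1}]$. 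Applying this element to $1 \in R$ and using $p(\sigma_i, 1) = s$ gives $p(\sigma_i, f) = sP(X) + Q(X)$. Applying the same element to $S(1) \in \module$ and invoking Corollary~\ref{thm:averaging}, which, after summing the pairs $\{\sigma,\sigma_i\sigma\}$ over $S_\kappa$, reads $\rho_d(\sigma_i, S(1)) = s\cdot S(1)$ once $d=s$, gives $\rho_d(\sigma_i, S(f)) = sP(x)\cdot S(1) + Q(x)\cdot S(1)$. Applying $S$ to the first expression recovers the second via the displayed intertwining, so the two sides agree.

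For $h = y_1$, a direct expansion shows $\omega(X_1^{n_1}\cdots X_\kappa^{n_\kappa}) = c^{2n_1} X_\kappa^{n_1}X_1^{n_2}\cdots X_{\kappa-1}^{n_\kappa}$, so
\[
p(y_1, f) = c^{2n_1}\,\sigma_1^{-1}\cdots\sigma_{\kappa-1}^{-1}\cdot\bigl(X_\kappa^{n_1}X_1^{n_2}\cdots X_{\kappa-1}^{n_\kappa}\bigr).
\]
Applying $S$ and iterating the $\sigma_i$ case (with $\sigma_i^{-1} = \sigma_i - \hbar$) yields $S(p(y_1, f)) = c^{2n_1}\tau_\kappa^{-1}\cdot x_\kappa^{n_1}x_1^{n_2}\cdots x_{\kappa-1}^{n_\kappa}\cdot S(1)$. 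On the enhanced side, Definition~\ref{def:action}(3) gives $\rho_d(y_1, (a_1^{n_1}\cdots a_\kappa^{n_\kappa}, \sigma)) = c^{2n_1}\tau_\kappa^{-1}\cdot(a_\kappa^{n_1}a_1^{n_2}\cdots a_{\kappa-1}^{n_\kappa}, \tau_\kappa\sigma)$, and summing over $\sigma \in S_\kappa$ with the re-indexing $\sigma' = \tau_\kappa\sigma$ (which leaves the sum invariant) produces exactly the same expression. The main obstacle throughout is bookkeeping in the $\sigma_i$ step: writing out $P$ and $Q$ explicitly from repeated commutation becomes unwieldy, but the final comparison closes cleanly because only the single scalar value $s$ of $\sigma_i$ on $1$ (respectively $S(1)$) enters the argument, so the explicit form of $P, Q$ is immaterial.
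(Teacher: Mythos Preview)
Your proof is correct and follows essentially the same route as the paper: reduce to the generators $x_i$, $\sigma_i$, $y_1$; settle $x_i$ by the obvious intertwining; use Corollary~\ref{thm:averaging} to get $\rho_d(\sigma_i,S(1))=s\,S(1)$; and handle $y_1$ via the $\omega$ formula together with the already-established cases. The one place you are more explicit than the paper is the $\sigma_i$ step on a general monomial $f$: the paper only verifies $\sigma_i$ on $f=1$ and leaves the extension to arbitrary $f$ implicit, whereas you spell out the $P\sigma_i+Q$ rewriting via the DAHA commutation relations to reduce to that case---this is a helpful clarification but not a genuinely different argument.
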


\begin{proof}
    It suffices to show that the equality holds for the generators as in Definition \ref{def:action}:

        (1) Let $x_i \in \ddot{H}_\kappa$ and $f(X_1,\cdots, X_\kappa) \in \Z[[s]][c^{\pm 1}][X_1^{\pm 1}, \cdots, X_\kappa^{\pm 1}]$. Then 
        \begin{align}
        S(p(x_i,f)) \notag&= S(X_i f) = \sum_{\sigma \in S_\kappa} (a_i f(a_1,\dots, a_\kappa), \sigma)\\ \notag &= \rho_d \bigl( x_i,\sum_{\sigma \in S_\kappa} (f(a_1,\dots, a_\kappa), \sigma)\bigr) = \rho_d(x_i,S(f)).
        \end{align}
        
        (2) Let $\sigma_i \in \ddot{H_\kappa}$ and consider $1 \in \Z[[s]][c^{\pm 1}][X_1^{\pm 1}, \cdots, X_\kappa^{\pm 1}]$.
        
        Let $A_\kappa$ be the alternating group on $\kappa$ elements, i.e. the subgroup of $S_\kappa$ consisting of even permutations. For a transposition $\sigma_i$ and permutation $\sigma \in S_\kappa$, either $\sigma \in A_\kappa$ or $\sigma_i \sigma \in A_\kappa$, so $$\sum_{\sigma \in S_\kappa} (1,\sigma) = \sum_{\rho \in A_\kappa} ((1,\rho) + (1,\sigma_i\rho)).$$ 
        
        \noindent Then, using Corollary \ref{thm:averaging},
        \begin{align}
        S(p(\sigma_i,1)) \notag &= S(s) = \sum_{\sigma \in S_\kappa} s (1, \sigma) =  \sum_{\rho \in A_\kappa} s ((1,\rho) + (1,\sigma_i\rho))\\ \notag & = \sum_{\rho \in A_\kappa} \rho_d \bigl( \sigma_i,(1,\rho) + (1,\sigma_i\rho)\bigr) = \rho_d\bigl( \sigma_i ,\sum_{\sigma \in S_\kappa} (1,\sigma)\bigr) \\ \notag &= \rho_d(\sigma_i,S(1)).
         \end{align}

        (3)  Let $y_1 \in \ddot{H}_\kappa$ and $f(X_1,\cdots, X_\kappa) = X_1^{n_1} \cdots X_\kappa^{n_\kappa} \in \Z[[s]][c^{\pm 1}][X_1^{\pm 1}, \cdots, X_\kappa^{\pm 1}]$. 
        
        Then $\omega(X_1^{n_1} \cdots X_\kappa^{n_\kappa}) =c^{2n_1} X_\kappa^{n_1}X_1^{n_2}\cdots X_{\kappa-1}^{n_\kappa}$.

        Taking advantage of the first two parts of the proof, we see that 
        \begin{align}
        S(p(y_1,f)) \notag &= S(\tau_\kappa^{-1} \omega(f)) = S(\tau_\kappa^{-1} c^{2n_1} X_\kappa^{n_1}X_1^{n_2}\cdots X_{\kappa-1}^{n_\kappa}) \\\notag &= c^{2n_1} \tau_\kappa^{-1} S(X_\kappa^{n_1}X_1^{n_2}\cdots X_{\kappa-1}^{n_\kappa}) = c^{2n_1} \tau_\kappa^{-1} \sum_{\sigma \in S_\kappa} (a_\kappa^{n_1}a_1^{n_2}\cdots a_{\kappa-1}^{n_\kappa},\sigma) \\\notag &= c^{2n_1} \tau_\kappa^{-1} \sum_{\sigma \in S_\kappa} (a_\kappa^{n_1}a_1^{n_2}\cdots a_{\kappa-1}^{n_\kappa},\tau_\kappa \sigma) = y_1 \cdot \sum_{\sigma \in S_\kappa} (f(a_1,\dots,a_\kappa),\sigma)\\ \notag & = \rho_d(y_1, S(f)). 
         \end{align} 
\end{proof}

Theorem \ref{thm:polyrepiso} can be restated as the following corollary:

\begin{corollary}
    Let $W \subset \module$ be the submodule generated over $\Z[[s]][c^{\pm 1}]$ by elements of the form $\sum_{\sigma\in S_\kappa}(\bm{a}, \sigma)$. 
    Then the enhanced polynomial representation has a subrepresentation over $W$ which is isomorphic to the polynomial representation of the double affine Hecke algebra. 
\end{corollary}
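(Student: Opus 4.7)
The plan is to recognize this corollary as a structural packaging of Theorem \ref{thm:polyrepiso}: once $d=s$ is imposed, the permutation-averaging map $S$ becomes a $\ddot{H}_\kappa$-equivariant isomorphism from the polynomial representation onto $W$. I would organize the verification around three observations.

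First, by construction $W$ equals the image of $S$. Every generator $\sum_{\sigma \in S_\kappa}(a_1^{n_1}\cdots a_\kappa^{n_\kappa}, \sigma)$ of $W$ is, by the very definition of $S$, the image $S(X_1^{n_1}\cdots X_\kappa^{n_\kappa})$, and since both $W$ and $\mathrm{image}(S)$ are $\Z[[s]][c^{\pm 1}]$-linear spans of the same generators, they coincide.

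Second, I would establish injectivity of $S$ using Lemma 1.2'. That lemma exhibits the tuples $\{(a_1^{n_1}\cdots a_\kappa^{n_\kappa}, \sigma)\}_{\bm{n},\sigma}$ as a $\Z[[s]][c^{\pm 1}]$-basis of $\module$ (after setting $d=s$). For distinct monomials $X_1^{n_1}\cdots X_\kappa^{n_\kappa}$, the elements $\sum_{\sigma \in S_\kappa}(a_1^{n_1}\cdots a_\kappa^{n_\kappa}, \sigma)$ are supported on disjoint basis vectors, hence $\Z[[s]][c^{\pm 1}]$-linearly independent, forcing $S$ to be injective. Combined with the first observation, $S$ is a $\Z[[s]][c^{\pm 1}]$-linear bijection onto $W$.

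Third, I would invoke Theorem \ref{thm:polyrepiso} directly. The equivariance identity $S(p(h,f)) = \rho_d(h, S(f))$ for all $h \in \ddot{H}_\kappa$ and $f \in \Z[[s]][c^{\pm 1}][X_1^{\pm 1},\dots, X_\kappa^{\pm 1}]$ shows two things simultaneously: the right-hand side lies in $S(\Z[[s]][c^{\pm 1}][X_1^{\pm 1},\dots,X_\kappa^{\pm 1}]) = W$, so $W$ is $\rho_d$-invariant and the restriction $\rho_d|_W$ is a well-defined subrepresentation; and the bijection $S$ intertwines the two actions, hence upgrades to an isomorphism of $\ddot{H}_\kappa$-modules between the polynomial representation and this subrepresentation. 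The main obstacle is essentially absent here once Theorem \ref{thm:polyrepiso} is in hand; the only subtle ingredient is the injectivity of $S$, and that is immediate from the explicit basis provided by Lemma 1.2'.
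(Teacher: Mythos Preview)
Your proposal is correct and matches the paper's approach: the paper states this corollary as an immediate restatement of Theorem~\ref{thm:polyrepiso} and gives no separate proof, while you have spelled out the routine verifications (that $W=\operatorname{image}(S)$, that $S$ is injective via the basis of Lemma~1.2', and that the equivariance identity makes $W$ a subrepresentation isomorphic to the polynomial one). Nothing further is needed.
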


\nocite{cherednik}
\nocite{abbondandolo2006floer}
\nocite{abbondandolo2008floer}
\nocite{abbondandolo2010floer}
\nocite{abouzaid2012wrapped}
\nocite{Enomoto2009}
\nocite{morton2021dahas}
\nocite{honda2022higher}

\printbibliography

@article{cherednik,
  title={Double Affine Hecke Algebras and Macdonald’s Conjectures.},
  author={Cherednik, Ivan},
  journal={Annals of Mathematics},
  volume={141},
  number={1},
  pages={191-216},
  year={1995}
}

@misc{colin2020applications,
    title={Applications of higher-dimensional Heegaard Floer homology to contact topology},
    author={Vincent Colin and Ko Honda and Yin Tian},
    year={2020},
    eprint={2006.05701},
    archivePrefix={arXiv},
    primaryClass={math.SG}
}

@article {abouzaid10,
    AUTHOR = {Abouzaid, Mohammed},
     TITLE = {A geometric criterion for generating the {F}ukaya category},
   JOURNAL = {Publ. Math. Inst. Hautes \'{E}tudes Sci.},
  FJOURNAL = {Publications Math\'{e}matiques. Institut de Hautes \'{E}tudes
              Scientifiques},
    NUMBER = {112},
      YEAR = {2010},
     PAGES = {191--240},
      ISSN = {0073-8301,1618-1913},
   MRCLASS = {53D37},
  MRNUMBER = {2737980},
MRREVIEWER = {Timothy\ Perutz},
       DOI = {10.1007/s10240-010-0028-5},
       URL = {https://doi.org/10.1007/s10240-010-0028-5},
}

@article{lipshitz2006cylindrical,
    AUTHOR = {Lipshitz, Robert},
     TITLE = {A cylindrical reformulation of {H}eegaard {F}loer homology},
   JOURNAL = {Geom. Topol.},
  FJOURNAL = {Geometry and Topology},
    VOLUME = {10},
      YEAR = {2006},
     PAGES = {955--1096},
      ISSN = {1465-3060},
   MRCLASS = {57R58 (57M27)},
  MRNUMBER = {2240908},
MRREVIEWER = {Stanislav Jabuka},
       DOI = {10.2140/gt.2006.10.955},
       URL = {https://doi.org/10.2140/gt.2006.10.955},
}

@article {ozsvath2004holomorphic,
    AUTHOR = {Ozsv\'{a}th, Peter and Szab\'{o}, Zolt\'{a}n},
     TITLE = {Holomorphic disks and topological invariants for closed
              three-manifolds},
   JOURNAL = {Ann. of Math. (2)},
  FJOURNAL = {Annals of Mathematics. Second Series},
    VOLUME = {159},
      YEAR = {2004},
    NUMBER = {3},
     PAGES = {1027--1158},
      ISSN = {0003-486X},
   MRCLASS = {57M27 (32Q65 57R58)},
  MRNUMBER = {2113019},
MRREVIEWER = {Thomas E. Mark},
       DOI = {10.4007/annals.2004.159.1027},
       URL = {https://doi.org/10.4007/annals.2004.159.1027},
}

@article {abbondandolo2010floer,
    AUTHOR = {Abbondandolo, Alberto and Schwarz, Matthias},
     TITLE = {Floer homology of cotangent bundles and the loop product},
   JOURNAL = {Geom. Topol.},
  FJOURNAL = {Geometry \& Topology},
    VOLUME = {14},
      YEAR = {2010},
    NUMBER = {3},
     PAGES = {1569--1722},
      ISSN = {1465-3060},
   MRCLASS = {53D40 (55N45 55P50 57R58)},
  MRNUMBER = {2679580},
MRREVIEWER = {Janko Latschev},
       DOI = {10.2140/gt.2010.14.1569},
       URL = {https://doi.org/10.2140/gt.2010.14.1569},
}

@article {abbondandolo2008floer,
    AUTHOR = {Abbondandolo, Alberto and Portaluri, Alessandro and Schwarz, Matthias},
     TITLE = {The homology of path spaces and Floer homology with conormal boundary conditions},
   JOURNAL = {J. fixed point theory appl.},
  FJOURNAL = {Geometry \& Topology},
    VOLUME = {4},
      YEAR = {2008},
    NUMBER = {},
     PAGES = {263-293},
      ISSN = {1465-3060},
   MRCLASS = {53D40 (55N45 55P50 57R58)},
  MRNUMBER = {2679580},
MRREVIEWER = {Janko Latschev},
       DOI = {10.1007/s11784-008-0097-y},
       URL = {https://doi.org/10.1007/s11784-008-0097-y},
}

@article {abbondandolo2006floer,
    AUTHOR = {Abbondandolo, Alberto and Schwarz, Matthias},
     TITLE = {On the {F}loer homology of cotangent bundles},
   JOURNAL = {Comm. Pure Appl. Math.},
  FJOURNAL = {Communications on Pure and Applied Mathematics},
    VOLUME = {59},
      YEAR = {2006},
    NUMBER = {2},
     PAGES = {254--316},
      ISSN = {0010-3640},
   MRCLASS = {53D40 (57R58)},
  MRNUMBER = {2190223},
MRREVIEWER = {Michael J. Usher},
       DOI = {10.1002/cpa.20090},
       URL = {https://doi.org/10.1002/cpa.20090},
}

@article {abouzaid2012wrapped,
    AUTHOR = {Abouzaid, Mohammed},
     TITLE = {On the wrapped {F}ukaya category and based loops},
   JOURNAL = {J. Symplectic Geom.},
  FJOURNAL = {The Journal of Symplectic Geometry},
    VOLUME = {10},
      YEAR = {2012},
    NUMBER = {1},
     PAGES = {27--79},
      ISSN = {1527-5256,1540-2347},
   MRCLASS = {53D37 (53D12)},
  MRNUMBER = {2904032},
MRREVIEWER = {Janko\ Latschev},
       URL = {http://projecteuclid.org/euclid.jsg/1332853049},
}

@article {morton2021dahas,
    AUTHOR = {Morton, Hugh and Samuelson, Peter},
     TITLE = {D{AHA}s and skein theory},
   JOURNAL = {Comm. Math. Phys.},
  FJOURNAL = {Communications in Mathematical Physics},
    VOLUME = {385},
      YEAR = {2021},
    NUMBER = {3},
     PAGES = {1655--1693},
      ISSN = {0010-3616},
   MRCLASS = {57K14 (17B37)},
  MRNUMBER = {4283999},
       DOI = {10.1007/s00220-021-04052-8},
       URL = {https://doi.org/10.1007/s00220-021-04052-8},
}

@article{honda2022higher,
    title={Higher-dimensional Heegaard Floer homology and Hecke algebras},
    author={Honda, Ko and Tian, Yin and Yuan, Tianyu},
    journal={J. Eur. Math. Soc.},
    year={to appear},
    eprint={2202.05593},
    archivePrefix={arXiv},
    primaryClass={math.SG}
}

@article{Enomoto2009,
author = {Naoya Enomoto},
title = {{Composition factors of polynomial representation of DAHA and q-decomposition numbers}},
volume = {49},
journal = {Journal of Mathematics of Kyoto University},
number = {3},
publisher = {Duke University Press},
pages = {441 -- 473},
year = {2009},
doi = {10.1215/kjm/1260975035},
URL = {https://doi.org/10.1215/kjm/1260975035}
}

\end{document}